\newtheorem{thmA}{Theorem}
\newtheorem{theorem}{Theorem}[section]
\newtheorem{proposition}[theorem]{Proposition}
\newtheorem{lemma}[theorem]{Lemma}
\newtheorem{corollary}[theorem] {Corollary}
\newtheorem{question}[theorem]{Question}
\theoremstyle{remark}
\newtheorem{remark}[theorem]{Remark}
\newtheorem{example}[theorem]{Example}
\theoremstyle{definition}
\newtheorem{definition}[theorem]{Definition}
\def\Z{\mathbb Z}
\def\R{\mathbb R}
\def\G{\Gamma}
\def\aut{{\rm{Aut}}}
\def\out{{\rm{Out}}}
\def\hom{{\rm{Hom}}}
\def\<{\langle}
\def\>{\rangle}
\def\psa{{\rm{PSA}}(A_{\G})}
\def\pso{{\rm{PSO}}(A_\G)}
\newcommand{\st}{\mathrm{st}}
\newcommand{\lk}{\mathrm{lk}}
\newcommand{\supp}{\mathrm{supp}}
\newcommand{\vspan}{\mathrm{span}}
\newcommand{\zg}[2]{\zeta^{#1}_{#2}}
\newcommand{\eg}[2]{\eta^{#1}_{#2}}
\title[Subspace arrangements, BNS invariants, and $\pso$]{Subspace arrangements, BNS invariants, and pure symmetric outer automorphisms of right-angled Artin groups}
\author{Matthew B. Day and Richard D. Wade}   
\date{28 June, 2016}
\begin{document}
\begin{abstract} 
We introduce a homology theory for subspace arrangements, and use it to extract a new system of numerical invariants from the Bieri-Neumann-Strebel invariant of a group. 
We use these to characterize when the set of basis conjugating outer automorphisms (a.k.a. the pure symmetric outer automorphism group) of a right-angled Artin group is itself a right-angled Artin group. 
\end{abstract}

\address{}
\email{}

\maketitle

\section{Introduction}
\subsection{Motivation}
Recall that a \emph{right-angled Artin group (RAAG)} is a group $A_\G$ given by a finite presentation whose only relations are that some pairs of generators commute (see Section~\ref{ss:RAAG} below).
Outer automorphism groups of RAAGs form a diverse and interesting family of groups.
We are  motivated by the following question:
\begin{question}\label{question:motivating}
When does the outer automorphism group $\out(A_\Gamma)$ contain another RAAG as subgroup of finite index?
What combinatorial conditions on the defining graph $\Gamma$ characterize this?
\end{question}
We feel that this is an important test question in terms of the field's understanding of these groups.
Two important sequences of outer automorphism groups of RAAGs, $\out(F_n)$ and $\rm{GL}_n(\mathbb{Z})$, exhibit very different behavior when $n=2$ compared to when $n\geq 3$.
In the case $n=2$, both are virtually free, but if $n\geq 3$ neither is virtually a RAAG (see references below).
The idea is to identify a `low rank' or `low complexity' type for the family of outer automorphism groups of RAAGs. There are variants of this question where $\out(A_\Gamma)$ is replaced by the automorphism group $\aut(A_\Gamma)$, or where instead of asking about finite-index subgroups, we ask more generally when $\out(A_\Gamma)$ is commensurable to a RAAG.

As well as the above virtually free examples, there are examples of RAAGs whose outer automorphism groups are finite \cite{CF, D} or virtually free abelian \cite{BF}. There are also some more interesting examples; for instance $\out(F_2 \times F_2)$ is commensurable with $F_2 \times F_2$ itself. On the other side of the spectrum:

\begin{itemize}
\item $\out(A_\G)$ may contain distorted cyclic subgroups (e.g. $\rm{GL}_3(\mathbb{Z})$).
\item $\out(A_\G)$ may contain a poison subgroup, forcing nonlinearity \cite{FP,AMP}.
\item $\out(A_\G)$ may have an exponential Dehn function (e.g. $\out(F_3)$ \cite{BV}).
\end{itemize}

All of these phenomena rule out the possibility of finite index subgroups being RAAGs. Despite these tools, a complete answer to Question~\ref{question:motivating} seems difficult. It is often tricky to tell whether a group is a RAAG on the nose or not, let alone up to finite index. For example, the group

\[G=\langle a,b,c,d,e | [a,b], [c,d], [ab,c], [cd,a]\rangle\] 
is a nonstandard presentation of the RAAG $(F_2 \times F_2)\ast \mathbb{Z}$, but after adding the innocent-looking relations $[e,b]$ and $[e,d]$, the group 
\[G'=\langle a,b,c,d,e | [a,b], [c,d], [ab,c], [cd,a], [e,b], [e,d]\rangle\]
is not isomorphic to a RAAG (this can be shown using the methods in this paper). This leads us to:
\begin{question}\label{question:recognizeRAAGs}
Suppose $G$ is a group given by a finite presentation whose only relations are commutators (between words in the generators).
Is there a procedure to recognize if $G$ is a RAAG?
\end{question}
This question is stated so generally that the answer is almost certainly `no', but for specific classes of groups the question is still interesting.
To show such a group is a RAAG, we need some kind of rewriting procedure for the presentation, and to show it is not a RAAG, we usually need some kind of subtle invariant.
One such invariant is the BNS invariant; Koban and Piggott used the BNS invariant to distinguish the non-RAAGs from a certain class of groups in a recent paper~\cite{KP}.
We discuss this below.  

\subsection{BNS invariants}
The BNS invariant $\Sigma$ of a finitely generated group $G$ was introduced in \cite{BNS}. 
It is an open subset of the character sphere of $G$ (i.e. the unit sphere of $\hom(G;\R)$) and it records the existence of certain kinds of actions on $\R$--trees.
We review the BNS invariant in Section~\ref{ss:bns} below. There is a growing collection of groups for which there is an explicit description of $\Sigma$.
These examples include:
\begin{itemize}
\item Fundamental groups of compact 3--manifolds \cite{BNS,T}.
\item Right-angled Artin groups \cite{MVW}.
\item Pure braid groups \cite{KMM}.
\item Pure symmetric automorphisms of right-angled Artin groups \cite{KP}.
\item Many hierarchies of groups over groups with trivial BNS invariants (see \cite{CL} for a precise formulation).
\end{itemize}

In the above examples, the complement $\Sigma^c$ is a union of linear subspheres of the character sphere, so that the pre-image of $\Sigma^c$ in $\hom(G;\mathbb{R})$ determines a set of subspaces $\mathcal{V}_G$ of $\hom(G;\mathbb{R})$. 
For an arbitrary pair $(V,\mathcal{V})$ consisting of a vector space $V$ and a set of subspaces $\mathcal{V}$ of $V$, one can define a chain complex $C_*(V,\mathcal{V})$ where $C_0=V$ and each $C_n$ is a formal direct sum of intersections of $n$ subspaces in $\mathcal{V}$. We describe this chain complex in Section~\ref{s:cc}, although we would be interested to know if it has appeared in the literature previously.   This chain complex has associated homology spaces $H_*(V,\mathcal{V})$. One can then study the homology \[H_*(\mathcal{V}_G)= H_*(\hom(G;\R),\mathcal{V}_G) \]
given by the arrangement of maximal complementary subspaces $\mathcal{V}_G$ in $\hom(G;\R)$. 
In the above list of examples, $\mathcal{V}_G$ is a finite set of subspaces of $\hom(G;\R)$, which allows for $H_*(\mathcal{V}_G)$ to be computed explicitly. 

In general, one can still define $\mathcal{V}_G$ to be the subspace arrangement consisting of maximal subspaces $V \subset \hom(G;\R)$ such that the equivalence class $[\chi]$ of each nontrivial  $\chi \in V$ is contained in $\Sigma^c$. This subspace arrangement only contains every character in the complement of the BNS invariant when  $\Sigma$ is \emph{symmetric} in the character sphere (i.e. $\Sigma=-\Sigma$). Nevertheless, the Betti numbers for this homology theory still provide a concrete set of numerical invariants for an arbitrary group $G$.

The homology theory above is heavily influenced by a recent paper of Koban--Piggott \cite{KP}, who determine exactly when the \emph{pure symmetric automorphism group}  of $A_\Gamma$ is itself a RAAG. This is directly related to our Question~\ref{question:motivating} because there are many examples of RAAGs where the pure symmetric automorphisms form a finite-index subgroup of $\aut(A_\Gamma)$.
This pure symmetric automorphism group $\psa$, sometimes called the \emph{basis conjugating automorphism group} is the subgroup consisting of automorphisms that take each element of a graphical basis of $A_\G$ to a conjugate of itself. The group $\psa$ has a standard generating set where each generator $\pi_K^a$ is given by a vertex $a \in \G$ and a component of $K$ of $\G-\st(a)$ (here $\st(a)$ is the subgraph of $\G$ spanned by $a$ and its adjacent vertices). The generator $\pi_K^a$ acts on each vertex of $\G$ by:
\begin{equation*} \pi_K^a(x) = \begin{cases} axa^{-1}& \text{if $x \in K$} \\ x &\text{otherwise}\end{cases} \end{equation*}
Toinet \cite{Toinet} gave a presentation of $\psa$ which was simplified by Koban and Piggott to one that uses the above generators (see Theorem~\ref{t:psapresentation}). 
This presentation for $\psa$ is the standard presentation of a RAAG unless the graph $\G$ contains a \emph{separating intersection of links}, or SIL (often pronounced `sill').
 A SIL occurs when there is a common component $K$ of both $\G -\st(a)$ and $\G -\st(b)$ for two non-adjacent vertices $a$ and $b$. 
This `no SIL' RAAG presentation of $\psa$ first appeared in work of Charney et al. \cite{CRSV}. 

In the converse direction, Koban and Piggott give an explicit description of the BNS invariant $\Sigma(\psa)$ and show that its complement is a set of rationally defined linear subspheres of the character sphere. 
Furthermore, they find an invariant which allows them to prove that when the graph $\G$ contains a SIL, the group $\psa$ is not a RAAG. 
The invariant they use coincides with the Euler characteristic of $H_*(\mathcal{V}_G)$. 
In our terminology, their results state:

\begin{thmA}[Koban--Piggott, \cite{KP}] If $G$ is a right-angled Artin group then the Euler characteristic of $H_*(\mathcal{V}_G)$ is equal to the rank of the center of $G$ (in particular, it is non-negative). If $G=\psa$ then either:
 \begin{itemize}  \item the graph $\G$ does not contain a SIL and the Euler characteristic of $H_*(\mathcal{V}_G)$ is zero; therefore $G$ is a RAAG with trivial center; or 
\item the graph $\G$ contains a SIL and the Euler characteristic of $H_*(\mathcal{V}_G)$ is strictly negative; $G$ is not a RAAG.\end{itemize}
\end{thmA}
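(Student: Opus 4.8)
The plan is to reduce both assertions to a combinatorial evaluation of the Euler characteristic at the chain level. Since the arrangements $\mathcal V_G$ here are finite, $C_*(\hom(G;\R),\mathcal V_G)$ is a bounded complex of finite-dimensional vector spaces, so its Euler characteristic can be read off the chain groups: $\sum_n(-1)^n\dim H_n(\mathcal V_G)=\sum_n(-1)^n\dim C_n$. For the RAAG case I would start from the Meier--VanWyk--Whittlesey description of $\Sigma(A_\Gamma)$: whether $[\chi]$ lies in $\Sigma^c$ depends only on the full subgraph of $\Gamma$ spanned by $\supp(\chi)$, so the preimage of $\Sigma^c$ in $\hom(A_\Gamma;\R)=\R^{V(\Gamma)}$ is a union of coordinate subspaces, and $\mathcal V_{A_\Gamma}=\{\R^S : S\in\mathcal S\}$, where $\mathcal S$ is the set of maximal ``dead'' subsets $S\subseteq V(\Gamma)$ (those for which $\R^S$ lies in that preimage). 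The summand of $C_n$ indexed by an $n$-element subset $I\subseteq\mathcal S$ is the coordinate subspace on $\bigcap_{S\in I}S$, so $\dim C_n=\sum_{|I|=n}\bigl|\bigcap_{S\in I}S\bigr|$, with $C_0=\R^{V(\Gamma)}$.

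Next I would evaluate $\sum_n(-1)^n\dim C_n$ by exchanging the order of summation and collecting terms one vertex at a time. Writing $\mathcal S_v=\{S\in\mathcal S : v\in S\}$, and noting that $v$ lies in $\bigcap_{S\in I}S$ exactly when $I\subseteq\mathcal S_v$, the contribution of $v$ to the alternating sum is $\sum_{I\subseteq\mathcal S_v}(-1)^{|I|}=(1-1)^{|\mathcal S_v|}$, which equals $1$ if $\mathcal S_v=\emptyset$ and $0$ otherwise. Hence the Euler characteristic of $H_*(\mathcal V_{A_\Gamma})$ equals the number of vertices that lie in no maximal dead set. Finally, $v$ lies in some dead set if and only if the singleton $\{v\}$ is itself dead, and by the Meier--VanWyk--Whittlesey criterion the character supported on $v$ alone lies in $\Sigma^c$ precisely when $v$ is not adjacent to every other vertex of $\Gamma$, i.e. when $v$ is not a cone vertex. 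Since the center of $A_\Gamma$ is free abelian on the cone vertices, their number is the rank of $Z(A_\Gamma)$; this proves the first assertion, and in particular its non-negativity.

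For $G=\psa$ I would feed in Koban--Piggott's explicit computation of $\Sigma(\psa)$. Its abelianization is free abelian on the standard generators $\pi^a_K$, so $\hom(\psa;\R)$ is coordinatized by these generators; Koban--Piggott exhibit $\Sigma(\psa)^c$ as a finite union of rationally defined subspheres, which again makes $\mathcal V_{\psa}$ explicit and subject to the same alternating-sum analysis. When $\Gamma$ has no SIL, Theorem~\ref{t:psapresentation} identifies $\psa$ with a RAAG; since $\psa$ has trivial center in that case (this is visible from the presentation), the first assertion gives Euler characteristic $0$. When $\Gamma$ does contain a SIL, the strategy is to show that the extra subspheres attached to the SIL triples $(a,b,K)$ push $\sum_n(-1)^n\dim C_n$ strictly below $0$; since a RAAG has non-negative Euler characteristic by the first assertion, it then follows that $\psa$ is not a RAAG.

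The main obstacle is this last point. One has to transcribe Koban--Piggott's description of $\Sigma(\psa)$ into an explicit list of maximal dead subspaces --- which, because of the SIL relations, need not be coordinate subspaces, so the clean vertex-by-vertex count above does not apply verbatim --- and then prove the resulting alternating sum is \emph{strictly} negative. The delicate issue is that distinct SIL triples interact with each other and with the ordinary generators in the intersection lattice, so the negative contributions are not obviously free of cancellation. The natural way to organize the argument is to separate off the sub-arrangement that is already present when $\Gamma$ has no SIL --- which contributes $0$ by the computation of the first part --- and to bound the remaining SIL correction above by a strictly negative quantity, for instance by $-s$, where $s>0$ is an appropriate count of the SILs of $\Gamma$.
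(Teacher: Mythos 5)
The statement you are proving is one the paper explicitly attributes to Koban--Piggott; the paper does not reprove it, so there is no internal proof to compare against sentence-by-sentence. What the paper does instead is prove a strictly stronger result in the RAAG case (Theorem~\ref{t:main}, parts (1) and (2)), and the relevant comparison is with that.

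For the RAAG half, your argument is correct and is essentially the original Koban--Piggott computation, which the paper acknowledges (``Our proposition refines a lemma of Koban--Piggott, which uses an inclusion-exclusion sum\dots to count the number of non-central vertices''). You compute $\chi(H_*)=\sum(-1)^n\dim C_n$, exchange summation over vertices and index sets $I$, and get $\sum_v(1-1)^{|\mathcal S_v|}$, reducing the claim to ``$v$ is in some maximal dead set iff $v$ is not a cone vertex'' via Meier--VanWyk. The paper takes a different route: Proposition~\ref{p:trivial} proves by induction on $\dim V$, via a short exact sequence of chain complexes coming from projecting out a single basis vector, that if every subspace in the arrangement is spanned by a subset of a fixed basis then $H_n=0$ for all $n\geq1$. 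Combined with the description of $H_0$ as $V/\vspan(\mathcal V)$, this pins down the entire homology, not just its Euler characteristic. What the paper's route buys is that strengthening (and the resulting cleaner statement in Theorem~\ref{t:main}); what your route buys is a shorter elementary computation that does not need the chain-level SES, but only recovers the alternating sum.

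For the $\psa$ half, your proposal has a genuine gap, which you acknowledge yourself. You identify correctly that the $\delta$-p-set subspaces (Proposition~\ref{p:psabns}) are spanned by differences $\chi^a_K-\chi^a_L$ and hence are not coordinate subspaces, so neither the vertex-by-vertex count nor Proposition~\ref{p:trivial} applies, and you correctly observe that a RAAG would have non-negative Euler characteristic, so strict negativity would finish the proof. But you do not actually carry out the alternating-sum computation for the arrangement in Proposition~\ref{p:psabns}, nor do you control the interaction between the $p$-set sub-arrangement and the $\delta$-p-set sub-arrangement, nor do you exhibit the explicit ``SIL correction'' $-s$. Since the paper also only cites~\cite{KP} here (restating it as Theorem~\ref{t:KP2}), there is no in-paper argument to check you against, but as a freestanding proof attempt the SIL case is left as a plan rather than a proof.
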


\subsection{Results}

In this paper, we study the image of $\psa$ in $\out(A_\G)$, which we call the \emph{pure symmetric outer automorphism group} of $A_\G$ and denote by $\pso$.  
We give a description of  $\Sigma^c(\pso)$ (see Proposition~\ref{p:bnspso}) as a finite set of rationally defined subspheres of the character sphere and classify when $\pso$ is itself a RAAG. 
Rather than being based on the existence of a SIL, this classification depends on \emph{how SILs are arranged in $\G$}. Let us describe this in a precise way: 
when $a$ and $b$ have a separating intersection of links, relations of the form $[\pi_K^a\pi^a_L,\pi^b_L]$ appear in the presentation of $\psa$, where $b \in K$ and $L$ is what we call a \emph{shared component} of both $\G - \st(a)$ and $\G - \st(b)$. The following graph gives a combinatorial description of when two components $K$ and $L$ of $\G - \st(a)$ occur in such a relation.

\begin{definition} For each vertex $a \in \G$ the \emph{support graph} $\Delta_a$ has a vertex for each component of $\G - \st(a)$. There is an edge between two components $K$ and $L$ if there exists a vertex $b$ such that $b \in K$ and $L$ is a shared component of both $\G - \st(a)$ and $\G -\st(b)$.
\end{definition}

In particular, the graph $\G$ has no SIL if and only if each support graph $\Delta_a$ is discrete. The following theorem is the main result of our paper. The first part describes $H_*(\mathcal{V}_G)$ precisely when $G$ is a RAAG, and the second part describes how the support graphs determine when $\pso$ is isomorphic to a RAAG.

\begin{thmA}\label{t:main} If $G$ is a right-angled Artin group then:
\begin{enumerate} 
\item $\dim(H_0(\mathcal{V}_G))$ is equal to the rank of the center of $G$.
\item $H_n(\mathcal{V}_G)=0$ if $n>0$.
\end{enumerate}
If $G=\pso$ then either:
\begin{itemize}
\item Each support graph $\Delta_a$ is a forest and $G$ is isomorphic to a right-angled Artin group.
\item For some vertex $a \in \G$ the support graph $\Delta_a$ contains a loop. Then $H_1(\mathcal{V}_G)$ is nontrivial and therefore $G$ is not a right-angled Artin group.
\end{itemize}
\end{thmA}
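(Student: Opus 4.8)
The key point is that when $G=A_\G$ the arrangement $\mathcal{V}_G$ is a \emph{coordinate} subspace arrangement in $\hom(A_\G;\R)=\R^{V(\G)}$. By the theorem of Meier--Meinert--VanWyk \cite{MVW}, a nonzero character $\chi$ of $A_\G$ satisfies $[\chi]\in\Sigma(A_\G)$ if and only if the full subgraph of $\G$ spanned by $\{v:\chi(v)\neq 0\}$ is connected and dominates $\G$; this condition depends only on the support of $\chi$, so $\Sigma^c(A_\G)$ is a union of subspheres spanned by coordinate subsets of $V(\G)$. Since a linear subspace contained in a finite union of coordinate subspaces must lie inside one of them, the maximal complementary subspaces are themselves coordinate subspaces $\R^{S_1},\dots,\R^{S_k}$. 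The differential of $C_*(\R^m,\{\R^{S_i}\})$ is assembled from the coordinate inclusions $\R^{S_I}\hookrightarrow\R^{S_{I'}}$, so the complex splits as a direct sum over the standard basis vectors: the summand attached to $e_j$ has degree-$n$ part freely spanned by the $n$-element subsets of $A_j:=\{i:j\in S_i\}$, and is (after a degree shift) the augmented simplicial chain complex of the full simplex on $A_j$. This is acyclic when $A_j\neq\emptyset$ and contributes a single $\R$ in degree $0$ when $A_j=\emptyset$, whence $H_n(\mathcal{V}_G)=0$ for $n>0$ and $\dim H_0(\mathcal{V}_G)=\#\{v:v\notin S_i\text{ for all }i\}$. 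Finally, the character supported on one vertex $v$ lies in $\Sigma^c$ exactly when $\{v\}$ fails to dominate $\G$, i.e.\ when $v$ is not adjacent to all other vertices; the vertices adjacent to all others generate $Z(A_\G)$, which is free abelian on them. Thus $v$ lies in some $S_i$ precisely when $v$ is non-central, and $\dim H_0(\mathcal{V}_G)$ is the rank of $Z(A_\G)$.

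\textbf{Passing to $\pso$ and the forest case.} Every inner automorphism carries each graphical generator to a conjugate of itself, so $\mathrm{Inn}(A_\G)\le\psa$ and $\pso=\psa/\mathrm{Inn}(A_\G)$; from the formula defining $\pi_K^a$ one reads off that conjugation by $a$ equals $\prod_K\pi_K^a$, the product over the components $K$ of $\G-\st(a)$. Adjoining these relators $\prod_K\pi_K^a$, one per vertex $a$, to the presentation of $\psa$ in Theorem~\ref{t:psapresentation} gives a presentation of $\pso$. When every support graph $\Delta_a$ is a forest, the plan is to use each new relator to eliminate one generator $\pi^a_{K_\ast(a)}$ for every non-central vertex $a$, choosing the component $K_\ast(a)$ so that the tree structure of $\Delta_a$ makes all these choices compatible; the delicate point is that this can be arranged so that after the substitutions every relator $[\pi_K^a\pi_L^a,\pi_L^b]$ coming from a SIL either dies or collapses to a commutator of two surviving generators. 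The output is then a presentation with only commutator relators, which one checks is the standard presentation of a RAAG, so $\pso\cong A_{\G'}$ for the corresponding graph $\G'$.

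\textbf{The loop case, and the main obstacle.} Now suppose some $\Delta_a$ contains a loop $K_1,\dots,K_r,K_1$, each edge $K_iK_{i+1}$ recording a SIL. Using the description of $\Sigma^c(\pso)$ in Proposition~\ref{p:bnspso} as an explicit finite union of rational subspheres, write down $\mathcal{V}_{\pso}$; in contrast with the RAAG case these subspheres are not coordinate subspheres in any basis, so the coordinatewise splitting is unavailable and one must compute by hand. The plan is to restrict attention to the subspaces attached to the SILs along the loop, assemble from their cyclic incidence pattern an explicit $1$-cycle in $C_1(\mathcal{V}_{\pso})$, and show that it is not a boundary; by the first part of the theorem this forces $\pso$ not to be a RAAG. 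I expect this last verification to be the main obstacle: it needs a precise understanding of how the subspheres coming from two SILs that share a common component of $\G-\st(a)$ meet inside $\hom(\pso;\R)$, and this intersection pattern is exactly the combinatorial data recorded by $\Delta_a$. Since a graph is a forest if and only if it contains no loop, the two cases are exhaustive.
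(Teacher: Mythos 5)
Your argument for the right-angled Artin case is correct, and it takes a genuinely different (and arguably cleaner) route than the paper. The paper proves the vanishing of higher homology for coordinate-subspace arrangements (Proposition~\ref{p:trivial}) by induction on $\dim V$, via a short exact sequence of chain complexes splitting off the last coordinate. You instead observe that, because every $V_i$ is a coordinate subspace and the boundary operator never alters the coefficient vector, the whole chain complex $C_*(V,\mathcal{V})$ decomposes as a direct sum over basis vectors $e_j$; the $e_j$-summand is, up to a degree shift, the augmented simplicial chain complex of the full simplex on $A_j=\{i:j\in S_i\}$. That complex is exact when $A_j\neq\emptyset$ and contributes a lone $\R$ in degree $0$ when $A_j=\emptyset$, which immediately yields both conclusions. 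You also correctly identify that $v$ lies in no $S_i$ precisely when $v$ is central in $A_\G$ (this uses that $\{v\}$ is always connected, so only the domination condition of Meier--VanWyk can fail). This part is a valid alternative to the paper's inductive argument, and it gives the result in one shot instead of by induction.

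The two $\pso$ cases, however, are not proved: you have stated plans, not arguments, and both plans have concrete problems. For the forest case, you propose to use the relator $\prod_K\pi^a_K=1$ to eliminate one standard generator $\pi^a_{K_*(a)}$ per non-central vertex $a$ and then check that the SIL relations collapse. But a SIL relation has the form $[\pi^a_K\pi^a_L,\pi^b_L]=1$, in which a \emph{product} of two generators with multiplier $a$ appears, and for arbitrary $K,L$ neither factor will be the eliminated one. Deleting a single generator per multiplier therefore cannot turn these into commutators of surviving generators; one needs to replace the entire generating set, not merely prune it. The paper does this by passing to the products $\zg{a}{C}=\prod_{K\in C}\pi^a_K$ over maximal subtrees $C$ of $\Delta_a$ and $\eg{a}{e}=\prod_{K\in\mathcal{L}}\pi^a_K$ over one side $\mathcal{L}$ of an edge $e$, with a carefully chosen system of basepoints; Section~\ref{s:presentation} then verifies a dictionary between the SIL relations and commutators of the new generators, and that the relators (R5) become trivial. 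That machinery is the actual content of the forest case, and your sketch does not contain it.

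For the loop case, your identification of the candidate $1$-cycle
\[(\chi^a_{K_1}-\chi^a_{K_2},\ \chi^a_{K_2}-\chi^a_{K_3},\ \dotsc,\ \chi^a_{K_n}-\chi^a_{K_1},\ 0,\dotsc,0)\]
supported on the maximal $\delta$-p-sets along the loop is exactly the cycle the paper uses. But you stop short of the point of the proof, which is to show it is not a boundary. The paper does this by pairing it against an explicit $1$-cocycle: take $T$ to be the set of indices $i$ with $\{\pi^a_{K_1},\pi^a_{K_2}\}\subset S_i$, define $f_{V_i}(\chi)=\chi(\pi^a_{K_1})$ for $i\in T$ and $f_{V_i}=0$ otherwise, and verify compatibility on all pairwise intersections using the fact (from Proposition~\ref{p:bnspso} and Theorem~\ref{t:kp}) that any $\chi\in V_i\cap V_j$ has support containing either both or neither of $\pi^a_{K_1},\pi^a_{K_2}$. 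The pairing then evaluates to $1$. Without this (or some other certificate of non-boundedness), the loop case is not established. In short: part one of the theorem is correctly re-proved by a different and elegant method; parts two and three are unproved sketches that, as written, either do not work (forest case) or omit the decisive step (loop case).
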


Our methods give effective algorithms to determine whether $\pso$ is a RAAG for a given $\G$, and to identify which RAAG it is, if it is one.
We encourage our readers to try out several examples, but we only give two here.
\begin{example}
Let $\G$ be the edgeless graph on three vertices $\{a,b,c\}$, so $A_\G$ is the free group $F_3$.
Koban--Piggott's theorem shows that $\rm{PSA}(F_3)$ is not a RAAG, because $a$ and $b$ form a SIL.
However, all three of the support graphs consist of a single edge, so are trees.
One can check that $\rm{PSO(F_3)}$ is a free group generated by the set $\{\pi^a_{b},\pi^b_{c},\pi^c_{a}\}$. 
\end{example}
\begin{example}
Let $\G$ be the edgeless graph on four vertices $\{a,b,c,d\}$, so $A_\G$ is the free group $F_4$.
Again, $\rm{PSA}(F_4)$ is not a RAAG because there are SILs.
All four of the support graphs are triangles; for example the path labeled by $\{b\}$--$\{c\}$--$\{d\}$--$\{b\}$ is a loop in $\Delta_a$.
We can use this loop to produce a nontrivial element of $H_1(\mathcal{V}_G)$, which implies that $G=\rm{PSO}(F_4)$ is also not a RAAG.
\end{example}

The paper is arranged as follows: Section~\ref{s:background} contains background material on right-angled Artin groups and their symmetric automorphisms. Section~\ref{s:homology} defines the homology $H_*(V,\mathcal{V})$ associated to a subspace arrangement $\mathcal{V}$ in a vector space $V$. It may be read independently from the rest of the paper. We describe some simple examples and show that $H_*$ is functorial with respect to morphisms between subspace arrangements.  In Section~\ref{s:bns} we apply this to BNS invariants of groups. We first give the general definition of $H_*(\mathcal{V}_G)$ before looking at the case when $G$ is equal to $A_\G$, $\psa$, or $\pso$ respectively. In particular we use Koban and Piggott's description of $\Sigma(\psa)$ to give a description of $\Sigma(\pso)$. Finally, in Section~\ref{s:presentation} we give an explicit RAAG presentation for $\pso$ when each support graph $\Delta_a$ is a forest. 
The main contribution in this final section is 
a description of a generating set for $\pso$ that serves as the standard basis for a graphical RAAG presentation (if there are SILs then the original generating set will not work).
This uses the structure of the support graphs in an essential way.

\subsection*{Acknowledgments}
The authors would like to thank Dawid Kielak, Lance Miller, Adam Piggott, Henry Schenck, Alex Suciu, and Uli Walther for helpful conversations.
They would also like to thank an anonymous referee for comments that improved the paper. 
Matthew Day was supported in part by NSF grant DMS-1206981.

\section{Pure symmetric automorphisms of RAAGs}\label{s:background}

\subsection{Right-angled Artin groups}\label{ss:RAAG}
A finite graph $\G$ with vertex set $V(\G)$ and edge set $E(\G)$ determines the \emph{right-angled Artin group} $A_\G$ with presentation:
\[ A_\G= \Big\langle V(\G) \Big| \big\{ [v,w] : \{v,w\}\in E(\G) \big\}\Big \rangle \] 

That is, the generators of $A_\G$ are the vertices of $\G$, and they commute if they are connected by an edge in $\G$. 
We call such a presentation a \emph{graphical presentation} for the RAAG. 
For $v \in \G$, its \emph{link} $\lk(v)$ is the 
set of
 vertices adjacent to $v$, and its \emph{star} $\st(v)$ is 
$\lk(v)\cup\{v\}$.
 For a word $w$, the \emph{support} of $\supp(w)$ consists of each vertex $v$ such that $v$ or $v^{-1}$ appears in $w$. 
A word $w$ is \emph{reduced} if we cannot cancel any inverse pairs of elements appearing in it: for any subword of the form $v^\epsilon w'v^{-\epsilon}$, the support of $w'$ is not contained in the star of $v$.
 The support of an element $g \in A_\G$ is the support of any reduced word representing $g$. 
This is independent of the reduced representative. 
For any full subgraph $\G'$, the group $A_{\G'}$ naturally embeds in $A_\G$ as the subgroup generated by the vertices in $\G'$, so $A_{\G'}=\langle \G'\rangle\subset A_\G$.  

For any vertex $v \in \G$, its centralizer $C(v)$ is the subgroup $A_{\st(v)}$.
This is an easy special case of Servatius's centralizer theorem~\cite{Servatius}.
The center $Z(A_\G)$ of $A_\G$ is the free abelian subgroup $A_{\G'}$, where $\G'$ is the span of the set of vertices adjacent to every other vertex in the graph.

\subsection{Symmetric automorphisms}

A \emph{partially symmetric automorphism} of $A_\G$ is an automorphism $\phi \in \aut(A_\G)$ such that each vertex $v \in A_\G$ is sent to a conjugate $gvg^{-1}$ under $\phi$. The conjugating element $g$ is allowed to vary with $v$. The set $\psa$ forms a subgroup of $\aut(A_\G)$. We define $\pso$ to be the image of $\psa$ in the outer automorphism group $\out(A_\G)$. If $\phi$ is an automorphism, we use $[\phi]$ to denote the equivalence class represented by $\phi$ in $\out(A_\G)$. Each vertex $a \in \G$ and component $K$ of $\G - \st(a)$ defines an automorphism $\pi_K^a$ of $\psa$, where:
\begin{equation*} \pi_K^a(x) = \begin{cases} axa^{-1}& \text{if $x \in K$} \\ x &\text{otherwise}\end{cases} \end{equation*}
We refer to elements of the form $\pi^a_K$ when $K$ is a component of $\G - \st(a)$ as \emph{standard generators} of $\psa$, and the set $X$ of all such elements as the \emph{standard generating set} of $\psa$. If $C=K_1 \cup K_2 \cup \cdots \cup K_n$ is a nontrivial union of connected components of $\G - \st(a)$, we may define $\pi^a_C$ in the same way as above. However, as \[ \pi_C^a=\pi_{K_1}^a\pi_{K_2}^a\cdots \pi_{K_n}^a\] we leave these elements out of our generating set $X$. We will refer to all elements of the form $\pi_C^a$ as \emph{partial conjugations} and reserve the term \emph{standard generator} for an element of the form $\pi_K^a$ when $K$ is a single connected component of $\G-\st(a)$. The element $a$ is called the \emph{multiplier} of the partial conjugation.  

\subsection{Commutation in $\out(A_\G)$}

The following lemma is a rephrasing of the classification of connected components given in \cite{MR2888948}. This classification is used throughout the paper, so for completeness we give a brief proof.

\begin{lemma}\label{l:components}
Let $a$ and $b$ be nonadjacent vertices of $\G$. We can write the components of $\G- \st(a)$ as $A_0,\ldots, A_k, C_1, \ldots, C_l$ and the components of $\G - \st(b)$ as $B_0,\ldots, B_m, C_1, \ldots, C_l$ where

\begin{itemize}
\item we have $b \in A_0$ and $a \in B_0$, and
\item $A_1, \ldots A_k \subset B_0$ and $B_1, \ldots, B_m \subset A_0$. 
\end{itemize}
\end{lemma}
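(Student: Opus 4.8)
The plan is to build the decomposition by hand rather than quote the component classification. Since $a$ and $b$ are nonadjacent, $b$ is a vertex of $\G-\st(a)$ and $a$ is a vertex of $\G-\st(b)$, so I would let $A_0$ be the component of $\G-\st(a)$ containing $b$ and $B_0$ the component of $\G-\st(b)$ containing $a$. This choice is forced and gives the first bullet immediately; the rest of the argument is sorting the remaining components of $\G-\st(a)$ into the two piles $A_1,\dots,A_k$ and $C_1,\dots,C_l$, and symmetrically on the $b$ side.

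The first step is to show that $A_0$ is the \emph{only} component of $\G-\st(a)$ that meets $\st(b)$. If a component $C$ of $\G-\st(a)$ contains a vertex $u\in\st(b)$, then either $u=b$, forcing $C=A_0$, or $u\in\lk(b)$, in which case $u$ and $b$ are adjacent and both lie in $\G-\st(a)$, hence in the same component, so again $b\in C$ and $C=A_0$. Consequently every component $C\ne A_0$ of $\G-\st(a)$ is a connected subgraph of $\G-\st(b)$, so it lies in a unique component $D$ of $\G-\st(b)$.

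The second step handles such a $C$. If $D=B_0$ then $C$ is one of $A_1,\dots,A_k$ (and $C\subsetneq B_0$, since $a\in B_0\setminus C$). If $D\ne B_0$, I claim $C=D$, so $C$ is a common component and appears among $C_1,\dots,C_l$. Indeed, otherwise connectedness of $D$ produces an edge from some $v\in C$ to some $w\in D\setminus C$; since $w$ is adjacent to the vertex $v$ of the component $C$ of $\G-\st(a)$ but $w\notin C$, necessarily $w\in\st(a)$, and $w\ne a$ (else $v\in\lk(a)$, contradicting $v\in C$), so $w\in\lk(a)$. Then $w$ and $a$ are adjacent vertices of $\G-\st(b)$, forcing $a\in D$; as $a\in B_0$ this contradicts $D\ne B_0$. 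Running the identical argument with $a$ and $b$ interchanged yields $B_0,B_1,\dots,B_m$ with each $B_i\subsetneq A_0$, together with a list of common components; these coincide with $C_1,\dots,C_l$ because any subgraph that is simultaneously a component of $\G-\st(a)$ and of $\G-\st(b)$ is neither $A_0$ nor $B_0$ (which meet $\st(b)$, resp. $\st(a)$) and is disjoint from both.

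I do not anticipate a genuine obstacle here; the only thing requiring care is the bookkeeping that the three types of component are mutually exclusive and exhaustive — in particular that no $A_i$ with $i\ge1$ is also a component of $\G-\st(b)$, which holds because $A_i\subsetneq B_0$ and two components of $\G-\st(b)$ are either equal or disjoint.
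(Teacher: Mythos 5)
Your proof is correct and follows essentially the same route as the paper's: show that a non-dominating component of $\G-\st(a)$ is disjoint from $\st(b)$, hence lies inside a single component of $\G-\st(b)$, and then classify it as subordinate or shared. The only cosmetic difference is that you sort components by asking whether the containing component is $B_0$ and derive a contradiction from a hypothetical boundary edge, whereas the paper's case split is on whether $K$ has an edge to $\lk(a)\setminus\lk(b)$; these two dichotomies are equivalent and both hinge on the same observation about edges through $\lk(a)$.
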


We say that $A_0$ and $B_0$ are the \emph{dominating components}, that $A_1, \ldots A_k$ and $B_1,\ldots B_k$  are the \emph{subordinate components}, and $C_1, \ldots C_l$ are the \emph{shared components} for the pair $(a,b)$. We will sometimes use $[b]_a$ to denote the component of $\G -\st(a)$ containing $b$, i.e. $[b]_a$ is the dominating component of $\G-\st(a)$ with respect to $b$. 
Note that if we fix $a$ the roles of the connected components of $\G - \st(a)$ may change as we vary $b$. 

\begin{proof}[Proof of Lemma~\ref{l:components}]
Let $K$ be a component of $\G-\st(a)$ that is not the dominating component (so $b\notin K$).
It is enough to show that $K$ is either a subordinate component or a shared component, since by the symmetry between $a$ and $b$ this will show that all components fall into the classification.

First we note that $K\cap\st(b)=\varnothing$.
If this were not the case, there would be a path of length one from $b$ to an element of $K$, and since $b$ is not adjacent to $a$ (and $\st(a)\cap K=\varnothing$) this would imply that $b\in K$, counter to our hypothesis.
Since $K\cap\st(b)=\varnothing$, $K$ is a subset of a single component of $\G-\st(b)$ (every path in $K$ avoids $\st(b)$, so every path in $K$ is a path in $\G-\st(b)$).

We break into two cases: (1) there is an edge from $K$ to an element of $\lk(a)-\lk(b)$ and (2) every edge from $K$ to $\lk(a)$ connects to an element of $\lk(a)\cap\lk(b)$.
In case (1), there is a path from $K$ to $a$ avoiding $\st(b)$, by passing through an element of $\lk(a)-\lk(b)$.
This means that there is a path from every element of $K$ to $a$ avoiding $\st(b)$, so that $K$ is a subset of the dominating component of $\G-\st(b)$ with respect to $a$, and therefore $K$ is a subordinate component.
In case (2), every path starting in $K$ and avoiding $\st(b)$ must also avoid $\st(a)$, since every edge from $K$ to $\lk(a)$ must connect to an element of $\st(b)$.
This means that paths starting in $K$ that avoid $\st(b)$ cannot escape $K$; in other words the component of $\G-\st(b)$ containing $K$ does not contain any elements outside of $K$ and must equal $K$.
So in case (2), $K$ is a shared component.
\end{proof}

Guiterrez, Piggott and Ruane \cite{MR2888948} give the following definition to describe when there exist shared components for the pair $(a,b)$:

\begin{definition}
We say that a pair $(a,b)$ forms a \emph{separating intersection of links} or is an \emph{SIL-pair} if $a$ and $b$ are nonadjacent and there is a connected component $R$ of $\G - (\lk(a) \cap \lk(b))$ with $a,b \not\in R$.
\end{definition}

\begin{lemma}[\cite{MR2888948}, Lemma 4.5]
A pair $(a,b)$ is an SIL-pair if and only if the set of shared components associated to $(a,b)$ is nonempty.
\end{lemma}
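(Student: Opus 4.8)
The plan is to prove the two implications separately, in both cases leaning on the trichotomy established inside the proof of Lemma~\ref{l:components}: every component $K$ of $\G-\st(a)$ with $b\notin K$ is either \emph{subordinate} (contained in the dominating component $[a]_b$ of $\G-\st(b)$) or \emph{shared} (equal, as an induced subgraph, to a component of $\G-\st(b)$). The reformulation to keep in mind throughout is that a shared component $C$ is simultaneously a component of $\G-\st(a)$ and of $\G-\st(b)$; in particular this notion is symmetric in $a$ and $b$, so ``the set of shared components is empty'' is a symmetric statement, and the trichotomy may be applied with the roles of $a$ and $b$ interchanged. We may assume $a$ and $b$ are nonadjacent (otherwise there are neither shared components nor a SIL, and the equivalence is vacuous); then $\st(a)\cap\st(b)=\lk(a)\cap\lk(b)=:S$, and since $S\subseteq\st(a)$ and $S\subseteq\st(b)$, both $\G-\st(a)$ and $\G-\st(b)$ are induced subgraphs of $\G-S$, so a path in either of them is a path in $\G-S$.

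For the direction ``shared component $\Rightarrow$ SIL-pair'': given a shared component $C$, let $R$ be the component of $\G-S$ containing $C$ (well-defined since $C$ is disjoint from $\st(a)\supseteq S$). If $R$ contained $a$, there would be a path inside $\G-S$ from a vertex of $C$ to $a$; its first vertex $u$ not in $C$ is $\G$-adjacent to a vertex of $C$, so $u\in\st(a)$ because $C$ is a component of $\G-\st(a)$, and likewise $u\in\st(b)$, hence $u\in\st(a)\cap\st(b)=S$ --- impossible for a vertex of a path in $\G-S$. So $a\notin R$, and symmetrically $b\notin R$, so $(a,b)$ is a SIL-pair.

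For the converse I would prove the contrapositive: assuming there are no shared components, I show every component of $\G-S$ meets $\{a,b\}$, so no separating component exists. Fix $v\in V(\G)\ssm S$ with $v\neq a,b$; I claim $v$ is joined to $a$ or to $b$ in $\G-S$. If $v\notin\lk(a)$ then $v$ lies in some component $K$ of $\G-\st(a)$: if $K=[b]_a$ then $v$ is joined to $b$ in $\G-\st(a)\subseteq\G-S$, and otherwise $K$ is subordinate (no shared components), so $v\in[a]_b$ and $v$ is joined to $a$ in $\G-\st(b)\subseteq\G-S$. If instead $v\in\lk(a)$ then $v\notin\lk(b)$ (as $v\notin S$), so $v\notin\st(b)$, and the same argument with $a$ and $b$ interchanged joins $v$ to one of $a,b$ in $\G-S$. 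Hence the only components of $\G-S$ are the ones containing $a$ and $b$, neither of which avoids both, so $(a,b)$ is not a SIL-pair.

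I do not expect a serious obstacle: the result is essentially bookkeeping on top of Lemma~\ref{l:components}. The points that need care are using the trichotomy of Lemma~\ref{l:components} \emph{symmetrically} --- so that ``no shared components'' also forces every non-dominating component of $\G-\st(b)$ to be subordinate --- and correctly placing the vertices of $\lk(a)\ssm\lk(b)$ and $\lk(b)\ssm\lk(a)$, which lie in $\G-S$ but in neither $\G-\st(a)$ nor $\G-\st(b)$; the identity $\st(a)\cap\st(b)=S$, valid precisely because $a$ and $b$ are nonadjacent, is what makes the boundary argument in the first implication work.
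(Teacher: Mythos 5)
Your proof is correct, and it rests on the same core observation as the paper's: shared components and the SIL-witnessing components of $\G-(\lk(a)\cap\lk(b))$ are essentially the same objects. The paper dispatches both directions in a single sentence, invoking the boundary characterization from the proof of Lemma~\ref{l:components} (``$K$ is shared iff $K$ avoids $\{a,b\}$ and every edge out of $K$ lands in $\lk(a)\cap\lk(b)$'') and asserting this makes $K$ a component of $\G-(\lk(a)\cap\lk(b))$ avoiding $a,b$; the reader is left to unpack the reverse implication. You instead prove the two implications separately and explicitly. Your forward direction is essentially the paper's boundary argument, re-derived from scratch. Your converse is genuinely a different argument: rather than showing directly that a SIL-witnessing component $R$ is itself a shared component, you prove the contrapositive by sweeping over vertices of $\G-S$ and using the dominating/subordinate dichotomy (with shared excluded) to route each one to $a$ or to $b$. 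One small cosmetic point: in the sub-case $v\in\lk(a)$ of your sweep, the ``subordinate'' branch of the interchanged dichotomy is vacuous (if $v\in\lk(a)$ then $v$ cannot lie in $[b]_a\subset\G-\st(a)$), so the cleaner statement is simply that $v$ and $a$ both lie in $\G-\st(b)$ and are adjacent, hence $v\in[a]_b$; your phrasing still yields the correct conclusion, but a reader following the interchanged dichotomy literally would hit an impossible branch. Both your approach and the paper's are valid; yours is longer but self-contained and does not require the reader to reconstruct the unstated reverse implication in the paper's one-liner.
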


\begin{proof}
From the above proof of Lemma~\ref{l:components}, a component $K$ is shared if and only if $K$ contains neither $a$ nor $b$ 
and every edge from $K$ to $\lk(a)$ or $\lk(b)$ is an edge to $\lk(a)\cap\lk(b)$.
This means that $K$ is a component of $\G-(\lk(b)\cap \lk(a))$ that does not contain $a$ or $b$.
\end{proof}

The above classification of components of $\G - \st(a)$ and $\G-\st(b)$ gives a quick way of describing when generators of $\psa$ or $\pso$ commute. We will use the commutator convention $[g,h]=ghg^{-1}h^{-1}$ throughout.

\begin{lemma}\label{l:comm}
Let $a$ and $b$ be nonadjacent vertices in $\G$. Then the commutator $[\pi_K^a,\pi_L^b]$ is nontrivial in $\aut(A_\G)$ if and only if one of the following conditions hold:
\begin{itemize}
\item $K$ and $L$ are the dominating components for the pair $(a,b)$.
\item Either $K$ or $L$ is dominating and the remaining component is shared.
\item We have $K=L$ (they are identical shared components for the pair $(a,b)$).
\end{itemize}
The image of the commutator in $\out(A_\G)$ is nontrivial if and only if one of the above cases holds and $(a,b)$ is an SIL-pair.
\end{lemma}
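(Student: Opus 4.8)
The plan is to evaluate the commutator $\phi := [\pi_K^a,\pi_L^b] = \pi_K^a\,\pi_L^b\,(\pi_K^a)^{-1}(\pi_L^b)^{-1}$ directly on each vertex of $\G$, using Lemma~\ref{l:components} to control the combinatorics. Write the components of $\G-\st(a)$ and of $\G-\st(b)$ as in that lemma, so that $K$ is the dominating component $A_0=[b]_a$, a subordinate component, or a shared component, and likewise for $L$. To compute $\phi(x)$ for a vertex $x$ one only needs to know, at each of the four stages, whether each letter occurring in the current word (these are always among $a$, $b$, $x$) lies in $K$ or in $L$, and Lemma~\ref{l:components} pins this down: any two of $K$ and $L$ are either nested or disjoint, with the single exception that $A_0$ and $B_0$ may properly overlap.

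First I would dispose of the pairs $(K,L)$ that are \emph{not} on the list. If $K\cap L=\varnothing$ with $b\notin K$ and $a\notin L$ — which, by Lemma~\ref{l:components}, covers every subordinate/subordinate, subordinate/shared, and distinct-shared pairing — then checking the three possibilities for $x$ ($x$ in $K$ only, in $L$ only, or in neither) gives $\pi_K^a\pi_L^b(x)=\pi_L^b\pi_K^a(x)$, so $\phi=1$. The only remaining off-list pairs have a subordinate component properly inside the opposite dominating component, that is $L=B_j\subsetneq A_0=K$ or $K=A_i\subsetneq B_0=L$; the same vertex-by-vertex check works, the one wrinkle being that the multiplier $b$ of $\pi_L^b$ now lies in $K=A_0$ and is conjugated by $\pi_K^a$, but the inserted $a^{\pm1}$'s cancel. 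What remains is exactly $(K,L)\in\{(A_0,B_0),\,(A_0,C_j),\,(C_j,B_0),\,(C_j,C_j)\}$ — precisely the three bullets — and for each of these I would exhibit a vertex witnessing $\phi\ne 1$ in $\aut(A_\G)$: for $(A_0,B_0)$ the vertex $b$ goes to $aba^{-1}$ under $\pi_K^a\pi_L^b$ but to $baba^{-1}b^{-1}$ under $\pi_L^b\pi_K^a$; in each case involving a shared component $C_j$, any vertex $x\in C_j$ is sent to visibly different words. Since $a$ and $b$ are non-adjacent, $\langle a,b\rangle\cong F_2$, and a vertex of a shared component is non-adjacent to both $a$ and $b$, so that $\langle a,b,x\rangle\cong F_3$; hence it suffices to compare reduced words. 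This proves the statement about $\aut(A_\G)$. The same computations record that, in the three shared cases, $\phi$ conjugates $\langle C_j\rangle$ by a fixed nontrivial $w\in\langle a,b\rangle$ and fixes every other vertex (in particular it fixes $a$ and $b$), and that in the case $(A_0,B_0)$ it conjugates $\langle A_0\cup B_0\rangle$ by $[a,b]$ and fixes every other vertex.

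For the statement in $\out(A_\G)$ I would assume $\phi\ne 1$ in $\aut$, so we are in one of the four cases. If $(a,b)$ is not an SIL-pair there are no shared components (a pair is an SIL-pair exactly when shared components exist, by \cite{MR2888948}), so necessarily $(K,L)=(A_0,B_0)$; but then, from the definitions $A_0=[b]_a$ and $B_0=[a]_b$ and the absence of shared components, one checks that every vertex outside $A_0\cup B_0$ is adjacent to both $a$ and $b$, hence commutes with $[a,b]$, so $\phi$ is conjugation by $[a,b]$ and $[\phi]=1$. Conversely, suppose $(a,b)$ is an SIL-pair and fix a shared component $C$. In the three shared cases $\phi$ fixes $a$ and $b$, so if $\phi$ were conjugation by some $g$ then $g\in C(a)\cap C(b)$, while $w^{-1}g$ centralizes $\langle C_j\rangle$ (since $\phi$ conjugates $\langle C_j\rangle$ by $w$); Servatius's centralizer theorem \cite{Servatius} then forces both $\supp(g)$ and $\supp(w^{-1}g)$ to be disjoint from $\{a,b\}$ (because $C_j$ avoids $\st(a)\cup\st(b)$), which is absurd, as the retraction $A_\G\to\langle a,b\rangle$ killing every generator but $a$ and $b$ sends $w^{-1}g$ to $w^{-1}\ne 1$. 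In the case $(A_0,B_0)$, $\phi$ fixes $\langle C\rangle$ (as $C$ is disjoint from $A_0\cup B_0$), so $\phi$ being conjugation by $g$ would force $\supp(g)$ disjoint from $\{a,b\}$ by the same theorem; retracting the identity $gag^{-1}=[a,b]\,a\,[a,b]^{-1}$ onto $\langle a,b\rangle$ then yields $a=[a,b]\,a\,[a,b]^{-1}$ in $F_2$, a contradiction. Thus $\phi$ is not inner and $[\phi]\ne 1$; since the three shared cases automatically make $(a,b)$ an SIL-pair, this gives the claimed equivalence.

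The step I expect to be most delicate is the purely computational one: carrying out the vertex-by-vertex evaluation of $\phi$ without cancellation errors — the case $(A_0,B_0)$ is the worst, since $A_0$ and $B_0$ genuinely overlap and the intermediate words are longest — together with, for the $\out(A_\G)$ part, correctly identifying the conjugating element $w$ in each shared case and checking that it remains nontrivial after the retraction to $\langle a,b\rangle$.
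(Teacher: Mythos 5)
Your proof is correct, and the overall strategy matches the paper's: classify pairs $(K,L)$ via Lemma~\ref{l:components}, settle the $\aut(A_\G)$ statement by direct evaluation, and use a vertex in a shared component as a witness for nontriviality in $\out(A_\G)$. Two places diverge in detail. First, for the $\aut(A_\G)$ part the paper simply cites Lemma~4.7 of \cite{MR2888948} (remarking that the computation is a straightforward exercise), whereas you carry out the exercise in full; both are fine, and your explicit record of the conjugating element in each case is what powers the rest of your argument. Second, and more substantively, for the direction ``not an SIL-pair $\Rightarrow$ trivial in $\out$'' in the dominating/dominating case, the paper uses a structural trick: writing $K^*$ for the union of the subordinate components of $\G-\st(a)$, it notes $[\pi^a_K]=[\pi^a_{K^*}]^{-1}$ in $\out(A_\G)$ and that $\pi^b_L$ commutes with every factor of $\pi^a_{K^*}$. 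You instead show directly that $[\pi^a_K,\pi^b_L]$ is conjugation by $[a,b]$ and that (absent shared components) every vertex outside $A_0\cup B_0$ lies in $\lk(a)\cap\lk(b)$ and so commutes with $[a,b]$. Your version is more computational but has the virtue of explicitly identifying the inner automorphism, which you then reuse in the SIL case to see that the commutator fails to be inner (via Servatius and the retraction onto $\langle a,b\rangle$, a slightly more explicit version of the paper's support argument). Both proofs are sound; the paper's $K^*$ manipulation is shorter, yours is more self-contained.
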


\begin{proof}
The statement about $\aut(A_\G)$ is shown in Lemma 4.7 of \cite{MR2888948}.
We note that the classification turns this statement into a straightforward exercise: in the cases listed above, find a vertex that the commutator does not fix, and in the other cases (some component is subordinate or the components are distinct shared components), show that every vertex is fixed.

Now we show the statement about $\out(A_\G)$.
First we suppose that we are not in one of the listed cases, or $(a,b)$ is not an SIL-pair.
If we are not in one of the three cases, then the commutator is trivial in $\out(A_\G)$ because it is trivial in $\aut(A_\G)$.
If $(a,b)$ is not an SIL-pair, then there are no shared components and the only interesting case is where $K$ and $L$ are both dominating.
 In this case, let $K^*$ be the union of the remaining (subordinate) components $A_1, \ldots A_k$ of $\G - \st(a)$. The product $\pi^a_{K^*}\pi^a_{K}$ is an inner automorphism. As $[\pi_{A_i}^a,\pi_L^b]=1$ for all $i$, the elements $\pi^b_{L}$ and $\pi^a_{K^*}$ commute. 
As $[\pi^a_K]=[\pi_{K^*}^a]^{-1}$ in $\out(A_\G)$, it follows that $[\pi^a_K]$ and $[\pi^b_L]$ commute.

We are left to show that if $(a,b)$ is an SIL-pair and the components $K$ and $L$ satisfy one of the above cases, then the commutator in question is also nontrivial in $\out(A_\G)$. Suppose that $K$ and $L$ are the dominating components. Then $[\pi_K^a,\pi_L^b](a)=[a,b]a[a,b]^{-1}$, and for any vertex $x$ in a shared component we have $[\pi_K^a,\pi_L^b](x)=x$. Suppose $[\pi_K^a,\pi_L^b]$ is an inner automorphism, conjugating all elements by some $g \in A_\G$. Then $gxg^{-1}=x$, so $g$ is in the centralizer of $x$ and the support of $g$ is a subset of $\st(x)$. It follows that the support of  $gag^{-1}$ is a subset of  $\st(x) \cup \{a\}$. This is a contradiction as $b$ is not in $st(x)$ and  \[ \supp(gag^{-1})=\supp([\pi_K^a,\pi_L^b](a))=\{a,b\}.\] Hence $[\pi_K^a,\pi_L^b]$ is nontrivial in $\out(A_\G)$. A similar argument applies in the remaining two cases. \end{proof}

In particular, one sees that $\pi^a_L$ and $\pi^b_K$ commute in $\pso$ unless $(a,b)$ is an SIL-pair.
Lemma~\ref{l:comm} makes it easy to identify the standard generators in the center of $\pso$; we leave this as an exercise to the reader.

\subsection{Presentations for $\psa$ and $\pso$}

Toinet \cite{Toinet} gave a presentation of $\psa$, and Koban--Piggott adapted Toinet's presentation as follows:

\begin{theorem}[Toinet (\cite{Toinet}, Theorem 3.1), Koban--Piggott (\cite{KP}, Theorem 3.3)] \label{t:psapresentation}
The group $\psa$ 
has a finite presentation consisting of the standard generating set and relations of the form:

\begin{enumerate}[(R1)]
\item $[\pi_K^a,\pi_L^b]=1$ when $[a,b]=1$.
\item $[\pi_K^a,\pi_L^b]=1$ when $K \cap L= \emptyset$, $b \not\in K$ and $a \not \in L$.
\item $[\pi_K^a,\pi_L^b]=1$ when $\{a\} \cup K \subset L$ or $\{b\} \cup L \subset K$.
\item \label{r4} $[\pi_{K}^a\pi_{L}^a,\pi_L^b]=1$ when $b \in K$ and $a \not\in L$. 
\end{enumerate}
\end{theorem}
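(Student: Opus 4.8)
The plan is to split the argument in two: an easy check that relations (R1)--(R4) genuinely hold in $\psa$, and a harder argument that these relations generate every relation among the standard generators $X$. For the first part I would take each of the four families in turn, evaluate both sides on an arbitrary vertex $x\in V(\G)$, and split into cases according to which of the relevant components contain $x$. Relations (R1)--(R3) are precisely the configurations in which Lemma~\ref{l:comm} already asserts that $[\pi^a_K,\pi^b_L]$ is trivial, so they cost nothing new; (R4) is the one genuinely new identity, and it is checked the same way. Writing $\psi=\pi^a_K\pi^a_L=\pi^a_{K\cup L}$ and $\phi=\pi^b_L$, the only interesting case is $x\in L$: there $\psi\phi(x)$ and $\phi\psi(x)$ both equal $abxb^{-1}a^{-1}$, and one sees that the hypothesis $b\in K$ is exactly what is needed, since it forces $\psi(b)=aba^{-1}$ so that the stray conjugator $a$ produced by $\psi$ on $b$ and the one produced on $x$ cancel; were $b\notin K$ the two sides would differ by a commutator $[a,b]$.

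For the completeness half I would run a peak-reduction argument in the style of McCool's treatment of basis-conjugating automorphisms of free groups, adapted to RAAGs; this is the substance of Toinet's proof, with Koban--Piggott afterwards performing Tietze transformations to eliminate the generators $\pi^a_C$ for $C$ a union of several components (via $\pi^a_C=\prod_i\pi^a_{K_i}$) and to distil the relation set down to (R1)--(R4). Concretely: start from a word $w=x_1^{\e_1}\cdots x_r^{\e_r}$ in $X^{\pm1}$ that represents the identity of $\psa$. Reading $w$ from the right builds up a sequence of partial automorphisms, each conjugating each vertex $v$ by some element $g_v$; assign to each initial segment the complexity $\sum_{v\in V(\G)}\ell(g_v)$ (syllable length of the conjugators), and to $w$ the sequence of these complexities. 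The core lemma is that whenever this sequence fails to be strictly decreasing then strictly increasing -- i.e. $w$ has a \emph{peak} (or plateau) -- one can rewrite a short subword around the peak, using only (R1)--(R4), so as to reduce the number of peaks of maximal height; iterating, $w$ rewrites to a peak-free word, and a peak-free word representing the identity has all conjugators trivial throughout, hence cancels freely to the empty word. Combined with the (known) fact that $X$ generates $\psa$, this shows the kernel of the map from the group presented by (R1)--(R4) onto $\psa$ is trivial.

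The main obstacle is the peak-lowering lemma itself. One must enumerate the ways a rising-then-falling adjacent pair $x_i^{\e_i}x_{i-1}^{\e_{i-1}}$ of partial conjugations can interact -- same multiplier, distinct multipliers with disjoint/nested/overlapping component data, and the various SIL configurations of Lemma~\ref{l:components} (dominating, subordinate, shared) -- and in each case produce the rewriting and verify it only invokes (R1)--(R4) while genuinely dropping the conjugator-syllable count. The SIL cases are the delicate ones: this is exactly where a naive ``slide the two generators past each other'' move is unavailable and where (R4) must be brought in to reorganize a pair of multipliers acting on a shared component, and making the bookkeeping of conjugator syllables actually decrease there is the crux of the proof; it is also, unsurprisingly, the locus of everything that later makes $\psa$ (and $\pso$) fail to be a RAAG. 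An alternative that sidesteps peak reduction is to exhibit a contractible cube or cell complex on which $\psa$ acts with tractable stabilizers -- for instance built inductively from Birman-type exact sequences as vertices are added to $\G$ -- and read a presentation off the quotient of its $2$-skeleton; but matching the $2$-cells to (R1)--(R4) still requires essentially the same case analysis.
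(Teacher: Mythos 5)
This theorem is cited, not proved, in the paper: the statement carries Toinet and Koban--Piggott as attributions and the authors simply invoke it. So there is no internal proof against which to compare your attempt; the relevant comparison is to those two sources, and your outline does track them. Your verification of the ``relations hold'' direction is correct, and the computation on $x\in L$ is exactly right: with $\psi=\pi^a_{K\cup L}$ and $\phi=\pi^b_L$, the hypothesis $b\in K$ forces $\psi(b)=aba^{-1}$, and both sides come out to $abxb^{-1}a^{-1}$. (One small caveat: Lemma~\ref{l:comm} in the paper only covers nonadjacent $a,b$, so (R1) in the case $[a,b]=1$ with $a\neq b$ still needs the same short direct check; this is trivial but worth noting.) Your description of the harder half is also faithful to the literature: Toinet's Theorem~3.1 gives a presentation of $\psa$ on the larger generating set of all partial conjugations $\pi^a_C$ by a peak-reduction argument of McCool type (adapted to RAAGs via Day's peak-reduction machinery), and Koban--Piggott then pass to the standard generating set $X$ and cut the relation list down to (R1)--(R4) by Tietze transformations. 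You have correctly identified the peak-lowering lemma as the crux and the SIL cases as the only genuinely delicate ones, and you are right that (R4) is exactly what is needed there. The only gap, which you flag honestly, is that the peak-lowering case analysis itself is not carried out; that analysis is the substance of Toinet's paper and cannot reasonably be reproduced in a sketch. As a review of a blind attempt at a cited theorem, this is about as good as one could ask.
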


Note that the case $[a,b]=1$ includes when $a=b$. In the language of Lemma~\ref{l:components}, the relation (R2) corresponds to distinct non-dominating components, and the relation (R3) corresponds to when one component is dominating and the remaining component is subordinate for the pair $(a,b)$. The repetition of $L$ in (R\ref{r4}) is not a misprint---the only time such a relation is not implied by (R1)--(R3) is when $K$ is a dominating component and $L$ is a shared component for $a$ and $b$ (in particular, $(a,b)$ forms an SIL). It follows that if $\G$ contains no SILs then $\psa$ is isomorphic to a right-angled Artin group (this was originally shown by Charney--Ruane--Stambaugh--Vijayan in \cite{CRSV}). We therefore call relations of the form (R4) \emph{SIL relations}.

As $\pso$ is obtained from $\psa$ by taking the quotient by the normal subgroup consisting of inner automorphisms, this implies:

\begin{corollary} \label{c:pres}
The group $\pso$ is finitely presented, with a presentation given by the image of the standard generating set in $\out(A_\G)$ and relations of the form:
\begin{enumerate}[(R1)]
\item $[\pi_K^a,\pi_L^b]=1$ when $[a,b]=1$
\item $[\pi_K^a,\pi_L^b]=1$ when $K \cap L= \emptyset$, $b \not\in K$ and $a \not \in L$
\item $[\pi_K^a,\pi_L^b]=1$ when $\{a\} \cup K \subset L$ or $\{b\} \cup L \subset K$.
\item $[\pi_{K}^a\pi_{L}^a,\pi_L^b]=1$ when $b \in K$ and $a \not\in L$. 
\item $\prod_{K \in I_a}  \pi_{K}^a=1$ where the product is taken over the set $I_a$ of connected components of $\G - \st(a)$.
\end{enumerate}
\end{corollary}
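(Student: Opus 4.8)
The plan is to present $\pso$ as the quotient $\psa/\mathrm{Inn}(A_\G)$ and then read off a presentation from Theorem~\ref{t:psapresentation} by applying von Dyck's theorem (presentations of quotients). The first observation is that every inner automorphism of $A_\G$ is partially symmetric: conjugation by $g$ sends each vertex $v$ to the conjugate $gvg^{-1}$. Hence $\mathrm{Inn}(A_\G)\subseteq\psa$, and since $\mathrm{Inn}(A_\G)$ is normal in $\aut(A_\G)$ it is in particular normal in $\psa$. Therefore $\pso$, being the image of $\psa$ in $\out(A_\G)=\aut(A_\G)/\mathrm{Inn}(A_\G)$, is precisely $\psa/\mathrm{Inn}(A_\G)$. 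So I would reduce the problem to identifying $\mathrm{Inn}(A_\G)$, as a normal subgroup of $\psa$, with the normal closure of a finite, explicitly written set of words in the standard generators $\pi^a_K$.

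The heart of the argument is a short computation: for each vertex $a$, the partial conjugation over all of $\G-\st(a)$ realizes conjugation by $a$, i.e.
\[ \omega_a \;:=\; \prod_{K\in I_a}\pi^a_K \;=\; \ad_a , \]
where $I_a$ denotes the set of connected components of $\G-\st(a)$. The product is well defined because its factors have pairwise disjoint supports, and $\omega_a$ fixes every vertex of $\st(a)$ while sending each vertex $x\notin\st(a)$ to $axa^{-1}$; since $a$ commutes with every vertex of $\lk(a)$ (and with itself), the inner automorphism $\ad_a$ does exactly the same thing, so the two agree. Because $\ad\colon A_\G\twoheadrightarrow\mathrm{Inn}(A_\G)$ is surjective and $A_\G$ is generated by its vertices, the finite set $\{\omega_a:a\in V(\G)\}$ generates $\mathrm{Inn}(A_\G)$; and as this subgroup is already normal in $\psa$, it coincides with the normal closure of that set.

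With this in hand, von Dyck's theorem applied to the presentation of $\psa$ in Theorem~\ref{t:psapresentation} immediately yields a presentation of $\pso$: the generators are the images of the standard generators, and the relations are (R1)--(R4) together with the new relations $\omega_a=1$, which is precisely relation (R5). Finiteness is automatic because $\G$ is finite, so there are only finitely many standard generators and finitely many relations of each listed type.

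I do not expect a serious obstacle here: essentially all of the content is the elementary identity $\omega_a=\ad_a$, and the remainder is the formal machinery of quotient presentations. The only points that need a little care are bookkeeping ones: getting the composition order right when verifying $\omega_a=\ad_a$ (harmless, since the factors commute), and noting that it is harmless to retain every $\pi^a_K$ as a generator even when its image in $\out(A_\G)$ is trivial --- which happens exactly when $\G-\st(a)$ is connected, in which case the corresponding instance of (R5) records that triviality directly.
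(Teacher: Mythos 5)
Your argument is correct and is exactly the intended one: the paper itself proves this corollary in a single clause by observing that $\pso=\psa/\mathrm{Inn}(A_\G)$, and your proposal simply supplies the routine details (that $\mathrm{Inn}(A_\G)\subseteq\psa$ is normal, that it is generated by the elements $\omega_a=\prod_{K\in I_a}\pi^a_K=\ad_a$, and that von Dyck then appends precisely the relations (R5)). Nothing is missing, and the bookkeeping caveats you flag (commuting factors in $\omega_a$, redundant generators when $\G-\st(a)$ is connected) are handled correctly.
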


\subsection{The support graph}

The \emph{support graph} $\Delta_a$ gives a combinatorial description of how the roles of the components of $\G-\st(a)$ for the pair $(a,b)$ change as we vary $b$ in $\G$. We repeat the definition from the introduction:

\begin{definition} For each vertex $a \in \G$ the \emph{support graph} $\Delta_a$ has a vertex for each component of $\G - \st(a)$. There is an edge between two components $K$ and $L$ if there exists a vertex $b$ such that $K$ is the dominating component with respect to $b$ (equivalently $b \in K$) and $L$ is a shared component of both $\G - \st(a)$ and $\G -\st(b)$.
\end{definition}

In other words, each edge in $\Delta_a$ is a \emph{dominating-shared pair}: a pair of components of the form $\{[b]_a,L\}$, where $L$ is a shared component for the pair $(a,b)$. Furthermore:

\begin{lemma}[Star Lemma]
Let $(a,b)$ be an SIL-pair. There is a unique connected component $C$ of $\Delta_a$ containing the dominating component $[b]_a$ and all shared components of $\G - \st(a)$ for the pair $(a,b)$. These vertices consist of a subset of the star of $[b]_a$. If $\Delta_a$ is a forest, then every shared component $L$ is adjacent to $[b]_a$ and a (possibly empty) set of subordinate components for the pair $(a,b)$. 
\end{lemma}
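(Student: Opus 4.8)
The plan is to establish the three assertions of the lemma in order; the first two are immediate from the definition of $\Delta_a$, and the third follows from the forest hypothesis via a one-line cycle argument.

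First I would fix the SIL-pair $(a,b)$ and name the components of $\G-\st(a)$ as in Lemma~\ref{l:components}: the dominating component $A_0=[b]_a$, the subordinate components $A_1,\ldots,A_k$, and the shared components $C_1,\ldots,C_l$, where $l\ge 1$ because $(a,b)$ is an SIL-pair. The key point is that the vertex $b$ itself witnesses an edge of $\Delta_a$ from $A_0$ to each $C_i$: indeed $A_0=[b]_a$ is the dominating component with respect to $b$, and each $C_i$ is by definition a shared component for $(a,b)$. Hence $C_1,\ldots,C_l$ all lie in $\lk_{\Delta_a}(A_0)$, so the set $\{A_0,C_1,\ldots,C_l\}$ is contained in $\st_{\Delta_a}(A_0)$, and in particular all of these vertices lie in the connected component $C$ of $\Delta_a$ containing $[b]_a$. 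Uniqueness of $C$ is just the fact that the connected components of a graph partition its vertex set. This proves the first two sentences.

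For the last sentence, assume $\Delta_a$ is a forest and let $L=C_i$ be a shared component for $(a,b)$. By the previous paragraph $[b]_a$ is a neighbour of $L$ in $\Delta_a$, so it remains to show that every other neighbour of $L$ is a subordinate component for $(a,b)$. Let $K$ be any neighbour of $L$ in $\Delta_a$. Since $K$ is a component of $\G-\st(a)$, Lemma~\ref{l:components} applied to the pair $(a,b)$ says that $K$ is $[b]_a$, one of the subordinate components $A_1,\ldots,A_k$, or one of the shared components $C_1,\ldots,C_l$. The last possibility cannot occur: if $K=C_j$ then $j\ne i$, since $\Delta_a$ has no loops, but then $[b]_a$, $C_i$, $C_j$ are three distinct vertices carrying the edges $\{[b]_a,C_i\}$, $\{[b]_a,C_j\}$ (from the previous paragraph) and $\{C_i,C_j\}$ (our assumption on $K$), i.e.\ a triangle in $\Delta_a$, contradicting that $\Delta_a$ is a forest. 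Hence $K$ is $[b]_a$ or subordinate, as claimed.

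I do not anticipate a genuine obstacle: the argument is bookkeeping against Lemma~\ref{l:components} together with the elementary fact that a forest contains no cycles. The one thing to keep straight is that the partition of the components of $\G-\st(a)$ into dominating, subordinate, and shared depends on the choice of the second vertex (here always $b$), so that a neighbour $K$ of a shared component $L$ really does fall into one of those three types for the pair $(a,b)$; once that is in place, the observation that two distinct shared components adjacent in $\Delta_a$ would form a triangle through $[b]_a$ is exactly what the forest hypothesis rules out.
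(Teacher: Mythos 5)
Your proof is correct and follows essentially the same route as the paper's: both observe that the definition of $\Delta_a$ forces an edge from $[b]_a$ to each shared component (so the shared components lie in $\st_{\Delta_a}([b]_a)$ and hence in a single connected component), and both use the triangle through $[b]_a$, $L$, $L'$ to rule out two shared components being adjacent when $\Delta_a$ is a forest. The only cosmetic difference is that you spell out the appeal to the classification of components from Lemma~\ref{l:components} to enumerate the possible neighbours of $L$ before eliminating the shared case, which the paper leaves implicit.
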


\begin{proof}
From the definition of $\Delta_a$ each shared component $L$ for the pair $(a,b)$ is connected by an edge to $[b]_a$ and makes up a subset of the star of $[b]_a$ in $\Delta_a$. Hence $[b]_a$ and the shared components for $(a,b)$ lie in the same connected component of $\Delta_a$. If two shared components $L$ and $L'$ are adjacent then there exists a loop in $\Delta_a$ through $L$, $L'$ and $[b]_a$. This cannot happen if $\Delta_a$ is a forest.
\end{proof}

The support graphs let us define a large set of central elements in $\pso$. For $C$ a component of $\Delta_a$, let $\zg{a}{C}$ denote the partial conjugation
\[\zg{a}{C}=\prod_{K\in C}\pi^a_K 
\] 
\begin{proposition} \label{p:r2} Let $C$ be a component of $\Delta_a$. 
The element $\zg{a}{C}$ is central in $\pso$.
\end{proposition}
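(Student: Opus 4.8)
The plan is to check directly that $\zg{a}{C}$ commutes with every generator $\pi^b_L$ of $\pso$, working from the presentation in Corollary~\ref{c:pres}. Since any two partial conjugations with the same multiplier commute (relation (R1) with $a=b$), I may freely reorder and split the product $\zg{a}{C}=\prod_{K\in C}\pi^a_K$ at will. If $[a,b]=1$ (in particular if $a=b$), relation (R1) shows that each $\pi^a_K$ commutes with $\pi^b_L$, so I may assume $a$ and $b$ are nonadjacent. I then fix the notation of Lemma~\ref{l:components}: the components of $\G-\st(a)$ are the dominating component $A_0=[b]_a$, the subordinate components $A_1,\dots,A_k$, and the shared components $C_1,\dots,C_l$, while those of $\G-\st(b)$ are $B_0=[a]_b$, $B_1,\dots,B_m$ and the same $C_1,\dots,C_l$.

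The first step is to discard the subordinate factors. If $K$ is one of $A_1,\dots,A_k$, then $\pi^a_K$ commutes with $\pi^b_L$ in $\pso$ for \emph{every} component $L$ of $\G-\st(b)$: a subordinate component is neither a dominating nor a shared component, and one checks it cannot be literally equal to any component of $\G-\st(b)$, so none of the three cases of Lemma~\ref{l:comm} applies. Hence $[\zg{a}{C},\pi^b_L]=1$ if and only if the partial product over the $K\in C$ that are \emph{not} subordinate commutes with $\pi^b_L$. Now the Star Lemma organizes these remaining factors: when $(a,b)$ is a SIL-pair it places $A_0$ together with all of $C_1,\dots,C_l$ in a single connected component of $\Delta_a$, so the connected component $C$ either contains every element of $\{A_0,C_1,\dots,C_l\}$ or contains none of them (and when $(a,b)$ is not a SIL-pair there are no shared components, and again $C$ either contains $A_0$ or not). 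In the ``none'' case the surviving product is empty and there is nothing to prove, so I am reduced to showing that the partial conjugation $\pi^a_{A_0}\pi^a_{C_1}\cdots\pi^a_{C_l}$ commutes with $\pi^b_L$.

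To finish I split on the type of $L$. If $L=B_i$ is subordinate ($i\ge 1$), then by Lemma~\ref{l:comm} \emph{every} $\pi^a_K$ commutes with $\pi^b_{B_i}$, so we are done. If $L=B_0$, I use relation (R5) to rewrite $\pi^a_{A_0}\pi^a_{C_1}\cdots\pi^a_{C_l}$ as $(\pi^a_{A_1}\cdots\pi^a_{A_k})^{-1}$ inside $\pso$, and each $\pi^a_{A_i}$ commutes with $\pi^b_{B_0}$ by the previous paragraph; this is the same argument used in the proof of Lemma~\ref{l:comm}. Finally, if $L=C_p$ is a shared component, then each $\pi^a_{C_q}$ with $q\ne p$ commutes with $\pi^b_{C_p}$ (distinct shared components, so no case of Lemma~\ref{l:comm} applies), while the two remaining factors $\pi^a_{A_0}$ and $\pi^a_{C_p}$ together commute with $\pi^b_{C_p}$ by the SIL relation (R4), applied with $A_0$ in the role of ``$K$'' (note $b\in A_0$) and $C_p$ in the role of ``$L$'' (note $a\notin C_p$). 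Since this exhausts the generators of $\pso$, the element $\zg{a}{C}$ is central.

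The routine content is the case division over the components of $\G-\st(b)$ together with two easy bookkeeping checks: that a subordinate component of one star-complement is never equal to a component of the other (so that case $K=L$ of Lemma~\ref{l:comm} cannot occur), and the identification of which $\pi^a_K$ commute with a given $\pi^b_L$. The one genuinely structural input --- and the reason the statement is phrased in terms of connected components of $\Delta_a$ --- is the Star Lemma, which supplies the all-or-nothing dichotomy for $C$ and thereby makes exactly the SIL relation (R4) available for the leftover pair $\pi^a_{A_0},\pi^a_{C_p}$.
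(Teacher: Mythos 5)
Your proof is correct and rests on the same two structural inputs as the paper's: Lemma~\ref{l:comm} to dispose of the easy commutation cases, and the Star Lemma to obtain the dichotomy that a connected component $C$ of $\Delta_a$ either contains all of the dominating and shared components for $(a,b)$ or none of them. The one place you diverge is in the final step. After reducing to showing $\pi^a_{A_0}\pi^a_{C_1}\cdots\pi^a_{C_l}$ commutes with $\pi^b_L$, you split on the type of $L$: you apply the inversion via (R5) only when $L=B_0$ is dominating, and for $L=C_p$ shared you instead invoke the SIL relation (R4) for the pair $\pi^a_{A_0}\pi^a_{C_p}$ and observe that the remaining $\pi^a_{C_q}$ with $q\neq p$ commute with $\pi^b_{C_p}$ outright. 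The paper's proof avoids this last case split entirely: once $(a,b)$ is a SIL-pair, $L$ is dominating or shared, and $C$ contains the dominating and shared components of $\Delta_a$, it rewrites $\zg{a}{C}=\prod_{K\notin C}(\pi^a_K)^{-1}$ and notes that every factor is a subordinate component, hence commutes with $\pi^b_L$ regardless of whether $L$ is dominating or shared. Both arguments are valid; yours is a bit more explicit and makes visible where relation (R4) enters, while the paper's inversion handles both remaining $L$-types in a single stroke and never needs to invoke (R4) for this proposition.
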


\begin{proof}
This follows from the fact that pairs of standard generators $\pi_K^a,\pi_L^b$ commute in $\out(A_\G)$ unless $(a,b)$ is an SIL-pair and $K$ and $L$ fall into one of the three cases from Lemma~\ref{l:comm}.   Fix a standard generator $\pi^b_L$ of $\pso$. If $(a,b)$ does not form an SIL-pair, then $[\pi_K^a,\pi_L^b]=1$ for all components $K$ of $\G - \st(a)$, so $\pi_L^b$ commutes with $\zg{a}{C}$. The same assertion also holds if $L$ is subordinate for the pair $(a,b)$. We may therefore assume that $(a,b)$ is an SIL-pair and $L$ is either a dominating or shared component of $\G - \st(b)$. The Star Lemma tells us that either every vertex of $C$ is a subordinate component for $(a,b)$, or $C$ contains all of the dominating and shared components.  In the first case, $\zg{a}{C}$ is a product of elements which commute with $\pi_L^b$, so $\zg{a}{C}$ commutes with $\pi_L^b$. Otherwise, as we are in $\out(A_\G)$: $${\zg{a}{C}} =\prod_{K \not\in C} ({\pi_K^a})^{-1}. $$ Each $K$ in this product is subordinate, so $\pi_L^b$ commutes with every term and also commutes with $\zg{a}{C}$. Hence $\zg{a}{C}$ commutes with every generator and is central in $\pso$.
\end{proof}

Note that $\zg{a}{C}$ is inner and trivial in $\pso$ if and only if $\Delta_a$ is connected.

\begin{remark}
When $\pso$ is a RAAG, our graphical presentation of $\pso$ will prove that elements of the form $\zg{a}{C}$ form a free (abelian) generating set of the center of $\pso$. 
It would be interesting to know whether the center is still free abelian, and whether these elements form a generating set, in the case that $\pso$ is not RAAG.
\end{remark}

\section{Subspace arrangements in vector spaces}\label{s:homology}

\subsection{A chain complex for subspace arrangements}\label{s:cc}

We fix a field $\mathbb{K}$ and work with vector spaces over $\mathbb{K}$.
A \emph{subspace arrangement} is a pair $(V,\mathcal V)$ where
$V$ is a vector space and $\mathcal{V}=(V_j)_{j\in J}$ is a collection of subspaces.
We may define a chain complex $C_*(V,\mathcal{V})$ as follows. 
We define $C_{k}$ to be trivial for $k<0$ and we define $C_0$ to be the vector space $V$.
For $k \geq 1$ we define $C_k$ by a vector space presentation.
 $C_k$ is the vector space over $\mathbb{K}$ spanned by tuples $(V_1,\ldots,V_k,v)$ such that:
\begin{itemize} 
\item $V_1,\ldots, V_k \in \mathcal{V}$,
\item $v \in V_1 \cap \ldots \cap V_k$,
\end{itemize}
subject to the relations that:
\begin{itemize}
\item $\lambda\cdot (V_1,\ldots,V_k,v)= (V_1,\ldots,V_k,\lambda v)$ for all $\lambda \in \mathbb{K}$
\item $(V_1,\ldots,V_k,v) + (V_1,\ldots,V_k,w) = (V_1,\ldots,V_k,v+w)$
\item for any permutation $\sigma$, we have $(V_1,\ldots,V_k,v)=\mathrm{sign}(\sigma)(V_{\sigma(1)},\ldots, V_{\sigma(k)},v)$;
\item if $V_i =V_j$ for some $i,j$, then $(V_1,\ldots,V_k,v)=0$ (this is implied by the above bullet point unless the field $\mathbb{K}$ is of characteristic $2$).
\end{itemize}
The boundary map $\partial_k\colon C_k \to C_{k-1}$ is defined by: $$\partial_k((V_1,\ldots,V_k,v))=\sum_{i=1}^k (-1)^{i-1} (V_1,\ldots,\hat{V}_i,\ldots,V_k,v),$$
where
$(V_1,\ldots,\hat{V}_i,\ldots,V_k,v)$ is the element of $C_{k-1}$ given by deleting the $i$th entry from the tuple. 
For $k=1$, the boundary map is defined by:
$$\partial_1((v_j)_{j \in J})= \sum_{j \in J} v_j.$$
This makes sense because $C_1=\oplus_{j\in J} V_j $ is simply the direct sum of the subspaces from $\mathcal{V}$.

\begin{remark}\label{r:repeatedsubspaces}
One can allow repetitions of subspaces in $\mathcal{V}$. In this case one must be careful to view the symmetrization given by bullet points (3) and (4) by treating vector spaces as equivalent if $V_i=V_j$ \emph{as indexed elements of $\mathcal{V}$} rather than just as subspaces of $V$. 
Adding a redundant subspace does not change the homology (see Proposition~\ref{p:max}).
We allow redundancy because it will simplify a later argument.
\end{remark}

\begin{proposition}
With the boundary maps $\partial_k$, the vector spaces  $C_k(V,\mathcal{V})$ form a well defined chain complex.
\end{proposition}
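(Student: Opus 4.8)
The plan is to verify three things in order: that each $C_k=C_k(V,\mathcal V)$ is a well-defined vector space, that each $\partial_k$ is a well-defined linear map, and that $\partial_{k-1}\circ\partial_k=0$.

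Well-definedness of $C_k$ needs no argument for $k\le 1$, where $C_0=V$ and $C_1=\bigoplus_{j\in J}V_j$ are given outright, and for $k\ge 2$ it is automatic: by construction $C_k$ is the free $\mathbb{K}$-vector space on the set of admissible tuples, modulo the subspace spanned by the listed relators. It is convenient, however, to record a concrete model. Writing $V_S=\bigcap_{j\in S}V_j$ for a finite subset $S\subseteq J$ (with $V_\varnothing=V$), the multilinearity and antisymmetry relations identify $C_k$ with $\bigoplus_{|S|=k}\mathrm{or}(S)\otimes_{\mathbb{K}} V_S$, where $\mathrm{or}(S)$ is the one-dimensional orientation space of the finite set $S$; under this identification a tuple $(V_{j_1},\dots,V_{j_k},v)$ with distinct indices $j_1,\dots,j_k$ corresponds to $(j_1\wedge\cdots\wedge j_k)\otimes v$. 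I will use this to keep track of signs, but it is not logically needed.

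For well-definedness of $\partial_k$ with $k\ge 2$, I would first define $\partial_k$ by the stated formula on the free vector space on admissible tuples; this is legitimate because $v\in V_1\cap\cdots\cap V_k$ lies in every intersection $V_1\cap\cdots\cap\widehat{V_i}\cap\cdots\cap V_k$, so each face $(V_1,\dots,\widehat{V_i},\dots,V_k,v)$ is again an admissible tuple. Then I would check that $\partial_k$ annihilates each family of relators. Additivity and homogeneity in the last slot are immediate since the formula is visibly linear in $v$. For the antisymmetry relations it suffices to treat a transposition of two adjacent entries $V_\ell,V_{\ell+1}$: each face deleting a position other than $\ell,\ell+1$ acquires a sign from the transposition, while the two faces deleting position $\ell$ or $\ell+1$ get interchanged, and a short check of the boundary coefficients shows the whole expression is negated. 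For the repeated-subspace relator, suppose two entries agree as indexed elements, say $V_i=V_j$ with $i<j$; then every face deleting a position other than $i$ or $j$ still has a repeated entry and hence vanishes in $C_{k-1}$, while the two surviving faces are carried to one another by the cyclic permutation that shifts one copy past the $j-i-1$ entries strictly between positions $i$ and $j$, of sign $(-1)^{j-i-1}$, and together with the coefficients $(-1)^{i-1}$ and $(-1)^{j-1}$ these two faces cancel. Hence $\partial_k$ descends to a well-defined linear map on $C_k$. For $k=1$, $\partial_1\colon\bigoplus_{j}V_j\to V$ is the canonical summation map, manifestly well-defined and linear.

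Finally, $\partial_{k-1}\circ\partial_k=0$. By linearity it is enough to evaluate on an admissible tuple, and for $k\ge 2$ this is the usual simplicial computation: expanding $\partial_{k-1}\partial_k(V_1,\dots,V_k,v)$ as a sum over ordered pairs of deleted positions and collecting, for each unordered pair of original labels $\{i,j\}$, the term that deletes $i$ then $j$ against the term that deletes $j$ then $i$, one finds that the two produce the same chain with opposite signs; every term is a legitimate chain because $v$ remains in each iterated intersection. For $k=1$ there is nothing below $C_1$ to compose with (we have $C_{-1}=0$), so the first identity to check is $\partial_1\circ\partial_2=0$: here $\partial_2(V_1,V_2,v)$ is the class of $v$ in the $V_2$-summand of $C_1$ minus its class in the $V_1$-summand, and $\partial_1$ sends this to $v-v=0$ in $V$. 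The one point requiring any care is the well-definedness of $\partial_k$ against the antisymmetry and, especially, the repeated-subspace relator — i.e.\ making every sign match, and in particular handling the repeated-subspace relation directly rather than via antisymmetry, since in characteristic $2$ the former is not implied by the latter. Once the signs are pinned down, $\partial^2=0$ is routine.
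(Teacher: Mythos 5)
Your proof is correct and takes essentially the same approach as the paper's, which also reduces $\partial^2=0$ to the standard pairwise cancellation of signs coming from deleting two positions in either order. The paper explicitly omits the verification that $\partial_k$ respects the defining relations of $C_k$; you supply those details (including the separate check of the repeated-subspace relator, which is needed in characteristic $2$), but the underlying argument is the same.
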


\begin{proof}
This is a straightforward exercise and we omit the details.
The most interesting part of the proof is the fact that $\partial_{k-1}\circ \partial_k=0$.
As often happens with chain complex boundary maps, this is a result of the sign convention:
for $\bar v=(V_1,\dotsc,V_k,v)\in C_k$,
the sum that we get by expanding $\partial_{k-1}\circ \partial_k(\bar v)$ contains each
$(V_1,\dotsc,\hat{V}_i,\dotsc,\hat{V}_j,\dotsc,V_k,v)$ twice, with opposite signs.
This is because $V_j$ is in the $j$th position of $\bar v$, but in the $(j-1)$st position of $(V_1,\dotsc,\hat{V}_i,\dotsc,V_k,v)$.
\end{proof}

\begin{definition}
For any subspace arrangement $(V,\mathcal{V})$ we define $H_*(V,\mathcal{V})$ to be the homology of the chain complex $C_*(V,\mathcal{V})$.
\end{definition}

As the image of  $\partial_1$ is equal to the span of $\mathcal{V}$, we have a description of $H_0$ as: \[H_0(V,\mathcal{V})\cong V / \vspan(\mathcal{V}).\]

For finite collections of subspaces, there is a more explicit description of the chain complex, which we give in the next section.

\subsection{Finite subspace arrangements}

Suppose that $\mathcal{V}=\{V_1,\ldots, V_n\}$ is a finite collection of subspaces of a vector space $V$  indexed by the set $I=\{1,\ldots,n\}$ with the natural ordering. We let $J=\{j_1,\ldots, j_k\}$ vary over all subsets of $I$ of size $k$ with $j_1 < j_2 < \cdots < j_k$ and define \[V_J=V_{j_1} \cap \ldots \cap V_{j_k}.\] The ordering of $I$ removes the need to symmetrize with respect to permuting terms in tuples, and gives a simpler description of each $C_k$ as the direct sum: \[ C_k = \bigoplus_{J\subset I, |J|=k} V_J .\]
For any $k$ and any $J\subset I$ with $|J|=k$, 
let $J_i$ be the set obtained from $J$ by removing the $i$th term, so that \[V_{J_i}=V_{j_1} \cap \ldots \cap \hat{V}_{j_i}\cap\ldots\cap V_{j_k}.\] 
Let $\partial_J^i\colon V_J \to V_{J_i}$ be the inclusion map, let $p_J : C_k \to V_J$ be the natural projection onto $V_J$ and let $\iota_{J_i} : V_{J_i} \to C_{k-1}$ be the inclusion map of $V_{J_i}$ as a factor of $C_{k-1}$.
The boundary map defined in Section~\ref{s:cc} may be rewritten as:
\[\partial_i(v) = \sum_{J}\sum_{i=1}^k (-1)^{i-1} \iota_{J_i}\circ\partial_J^i \circ p_J(v), \]
where the left hand sum ranges over all $J\subset I$ with $|J|=k$.
We define $\partial_1$ the same way as before. 
These maps are reasonably easy to write explicitly in examples. For instance, $\partial_2\colon C_2 \to C_1$ maps a vector $v\in V_i \cap V_j$ to the tuple $(0,0,\ldots,v,0,\ldots,-v,0\ldots,0)$ in $C_1=V_1\oplus V_2\cdots \oplus V_n$, where the nonzero terms occur in the $i$th and $j$th positions.

\begin{example}\label{example:h1nontrivial}
 Let $V$ be $\mathbb{K}^2$ with basis $x=(1,0)$ and $y=(0,1)$. Let $V_1$ be the $x$-axis, let $V_2$ be the $y$-axis, let $V_3$ be the subspace given by the diagonal line spanned by $x + y$, and let $\mathcal{V}=\{V_1,V_2,V_3\}$. 
Then $C_0=\mathbb{K}^2$, the space $C_1=\langle x\rangle \oplus \langle y \rangle \oplus \langle x +y \rangle$ is 3-dimensional, and 
each $C_k$ for $k \geq 2$ is trivial as no pair of distinct subspaces intersects nontrivially. 
As these subspaces span $\mathbb{K}^2$, the map $\partial_1\colon C_1\to C_0$ is surjective, and $H_0(V,\mathcal{V})=0$.
The space of $1$-cycles is $1$-dimensional, and is spanned by the cycle $(x,y,-x-y)$.
Since there are no nontrivial $1$-boundaries, this means that $H_1(V,\mathcal{V})$ is 1-dimensional, and all other homology vector spaces are trivial. 
\end{example}

\begin{example} \label{example:h2nontrivial}
Let $V$ be $\mathbb{K}^3$ with basis $x,y,z$. Let $\mathcal{V}$ be the collection of subspaces defined by \begin{align*} V_1 &= \langle y,z\rangle & V_2&=\langle x+ y,z\rangle \\ V_3&=\langle x, y+z \rangle & V_4 &= \langle x,y \rangle \end{align*} There are 6 intersections $V_i \cap V_j$ with $i < j$ given by: \begin{align*} V_1\cap V_2 &= \langle z \rangle  & V_1 \cap V_3 &= \langle y+z \rangle & V_1 \cap V_4 &=\langle y\rangle \\
V_2 \cap V_3 &=\langle x+y+z\rangle & V_2 \cap V_4 &=\langle x+y \rangle & V_3 \cap V_4 &= \langle x\rangle \end{align*}
The above calculation implies that each intersection of distinct triples in $\mathcal{V}$ is trivial, so that the chain complex is of the form \[ 0\to C_2 \xrightarrow{\partial_2} C_1 \xrightarrow{\partial_1} V\to 0\] with $\dim V =3$, $\dim C_1 = 8$ and $\dim C_2 = 6$. The map $\partial_1$ is surjective, so that $\dim (\ker \partial_1) =5$. One can check that $\partial_2$ surjects onto $\ker \partial_1$, so that $\dim (\ker \partial_2)=1$. It follows that $H_2(V,\mathcal{V})$ is 1-dimensional and the homology is trivial everywhere else.
\end{example}

\subsection{Functoriality}\label{s:funct}

Suppose that $(V,\mathcal{V})$ and $(W,\mathcal{W})$ are subspace arrangements in two vector spaces $V$ and $W$ over the same field $\mathbb{K}$. A \emph{morphism of subspace arrangements} $f\colon (V,\mathcal{V}) \to (W,\mathcal{W})$  is a linear map $f\colon V \to W$ such that for each $V' \in \mathcal{V}$, its image $f(V')$ is contained in some element of $\mathcal{W}$. In other words, for any morphism there exists a map $\alpha\colon \mathcal{V} \to \mathcal{W}$ such that $f(V') \subset \alpha(V')$ for all $V' \in \mathcal{V}$. Note that if $v \in V_1 \cap \ldots \cap V_k$ then $f(v) \in \alpha(V_1) \cap \ldots \cap  \alpha(V_k)$. Hence every choice of $\alpha$ as above gives a map \[\alpha_C \colon C_*(V,\mathcal{V}) \to C_*(W,\mathcal{W}) \] of chain complexes induced by the linear extension of the map: \[\alpha_C((V_1,\ldots,V_k,v))=(\alpha(V_1),\ldots,\alpha(V_k),f(v)). \] On $C_0$ we define $\alpha_C$ from $C_0(V,\mathcal{V})=V$ to $C_0(W,\mathcal{W})=W$ to be the linear map $f$. It is easy to check that $\alpha_C$ is a chain map, so we have an induced map on homology \[ \alpha_*\colon H_*(V,\mathcal{V}) \to H_*(W,\mathcal{W}) .\]
 Given $V' \in \mathcal{V}$, the subspace $f(V')$ may be contained in more than one element of $\mathcal{W}$, which means that the map $\alpha$ need not be unique. However, the next proposition shows that the induced map on homology depends only on $f$.

\begin{proposition}\label{p:prism}
Let $f\colon(V,\mathcal{V})\to(W,\mathcal{W})$ be a morphism of subspace arrangements. Let $\alpha,\beta\colon\mathcal{V} \to \mathcal{W}$ be maps such that $f(V') \subset \alpha(V'),\beta(V')$ for all $V' \in \mathcal{V}$. Then $\alpha_*=\beta_*$.
\end{proposition}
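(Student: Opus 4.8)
The plan is to construct an explicit chain homotopy between the chain maps $\alpha_C$ and $\beta_C$, following the classical ``prism'' construction familiar from singular homology (hence, presumably, the name of the proposition). First I would observe that for each $V' \in \mathcal{V}$, the image $f(V')$ is contained in both $\alpha(V')$ and $\beta(V')$, hence in $\alpha(V') \cap \beta(V')$; this is the geometric fact that makes the construction possible. I then define a map $P_k \colon C_k(V,\mathcal{V}) \to C_{k+1}(W,\mathcal{W})$ on a generator $(V_1,\dotsc,V_k,v)$ by the alternating sum
\[
P_k((V_1,\dotsc,V_k,v)) = \sum_{i=0}^{k} (-1)^i \bigl(\alpha(V_1),\dotsc,\alpha(V_i),\beta(V_i),\dotsc,\beta(V_k),f(v)\bigr),
\]
where the $i$th term interpolates between the ``all $\alpha$'' tuple and the ``all $\beta$'' tuple by switching from $\alpha$ to $\beta$ after the $i$th slot (and inserting both $\alpha(V_i)$ and $\beta(V_i)$ in the middle). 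One must check this lands in $C_{k+1}(W,\mathcal{W})$: the vector $f(v)$ lies in $\alpha(V_1) \cap \dotsb \cap \alpha(V_i) \cap \beta(V_i) \cap \dotsb \cap \beta(V_k)$ because $f(v) \in f(V_j) \subset \alpha(V_j) \cap \beta(V_j)$ for every $j$. One also needs $P_k$ to be well-defined with respect to the relations defining $C_*$ (linearity in $v$ is clear; the antisymmetry and the vanishing-on-repeats relations are respected because reordering or repeating the $V_j$ among the inputs correspondingly reorders or repeats the $\alpha(V_j)$ and $\beta(V_j)$ in each term, changing every term's sign or killing it consistently), and one should handle the low-degree cases $k=0$ (where $P_0(v) = (\alpha(?), \beta(?), f(v))$ must be interpreted carefully, or rather $C_0 = V$ and $P_0$ sends $v$ to the degree-one element built from $f(v)$ viewed in each summand) separately.

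The heart of the argument is then the chain homotopy identity
\[
\partial_{k+1} \circ P_k + P_{k-1} \circ \partial_k = \beta_C - \alpha_C
\]
on $C_k$, which I would verify by expanding both composites on a generator $(V_1,\dotsc,V_k,v)$. When $\partial_{k+1}$ is applied to the $i$th term of $P_k$, deleting the $j$th slot for $j < i$ or $j > i+1$ produces a term that cancels against a term coming from $P_{k-1} \circ \partial_k$ (this is the bookkeeping that always appears in the prism argument), while deleting one of the two middle slots ($\alpha(V_i)$ or $\beta(V_i)$) produces a term that telescopes against a middle-slot deletion from the $(i-1)$st or $(i+1)$st summand of $P_k$. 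The telescoping collapses to leave exactly the ``all $\beta$'' tuple $(\beta(V_1),\dotsc,\beta(V_k),f(v)) = \beta_C((V_1,\dotsc,V_k,v))$ minus the ``all $\alpha$'' tuple $\alpha_C((V_1,\dotsc,V_k,v))$. Having established this identity, it follows immediately that $\alpha_C$ and $\beta_C$ induce the same map on homology, i.e.\ $\alpha_* = \beta_*$.

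I expect the main obstacle to be purely combinatorial: carefully matching up the signs and indices so that the cancellations in $\partial_{k+1} P_k + P_{k-1} \partial_k$ work out, especially keeping track of which slot has been deleted when a deletion from $\partial_k$ (which acts on the $V_j$'s before $P_{k-1}$ is applied) meets a deletion inside $P_k$. The degree-zero boundary case, where $C_1 = \bigoplus_{j} W_j$ is a direct sum rather than a symmetrized tensor-like object and $\partial_1$ is a sum rather than an alternating sum, also needs a slightly separate check, but it is routine. A cleaner alternative I might mention is to factor the argument: since (by a result analogous to Proposition~\ref{p:max}) adding redundant subspaces does not change homology, one could enlarge $\mathcal{W}$ to include all intersections $\alpha(V') \cap \beta(V')$ and reduce to the situation where $\alpha$ and $\beta$ are related by a single ``elementary'' move, but the direct chain-homotopy computation is self-contained and I would present that.
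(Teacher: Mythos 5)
Your plan follows the same route as the paper: both construct Hatcher's prism operator to produce an explicit chain homotopy between $\alpha_C$ and $\beta_C$, using the observation that $f(V') \subset \alpha(V') \cap \beta(V')$ to license inserting both $\alpha(V_i)$ and $\beta(V_i)$ into a tuple, and both verify the telescoping cancellation on generators. Your chain homotopy identity $\partial P + P\partial = \beta_C - \alpha_C$ is the classical form (the paper writes $\partial P - P\partial$, which appears to be a sign slip on its part; either form gives $\alpha_*=\beta_*$ since the $P\partial$ term vanishes on cycles).

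Two details in your outline would fail if carried out as written. First, the summation range should be $i=1,\dotsc,k$, not $i=0,\dotsc,k$: the $i=0$ term involves $\alpha(V_0)$ and $\beta(V_0)$, which do not exist, and any degenerate reading of it lies in $C_k$ rather than $C_{k+1}$. Second, and more substantively, your parenthetical claim that the antisymmetry relations are automatically respected by $P$ is incorrect. Writing $a_j=\alpha(V_j)$, $b_j=\beta(V_j)$, $w=f(v)$, one has $P(V_1,V_2,v) = (a_1,b_1,b_2,w) - (a_1,a_2,b_2,w)$ while $P(V_2,V_1,v) = (a_2,b_2,b_1,w) - (a_2,a_1,b_1,w)$; when $a_1,a_2,b_1,b_2$ are pairwise distinct these are supported on disjoint triples of subspaces, so they cannot be negatives of each other in $C_3(W,\mathcal{W})$, and $P$ does not descend to the antisymmetrized quotient. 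The repair---which the paper also leaves implicit---is to define $P$ on the explicit direct-sum model of $C_k$ from the finite-arrangement subsection: fix a total order on $\mathcal{V}$ and take each generator with strictly increasing indices. The boundary maps preserve increasing-index representatives, so the homotopy identity can be verified on this model directly, with no quotient to descend to. With these two repairs your argument goes through and matches the paper's.
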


\begin{proof}
We will construct an explicit chain homotopy between the maps $\alpha_C$ and $\beta_C$. We use an easy modification of the \emph{prism operators} used to show that homotopic maps between two topological spaces induce the same map on homology. 
We define a degree-one map \[P:C_*(V,\mathcal{V}) \to C_{*+1}(W,\mathcal{W})\] 
that is trivial on $C_0$, and for $k\geq 1$ is defined on generators of $C_k$ by: \[ P((V_1,\ldots,V_k,v))=\sum_{i=1}^k (-1)^{i+1}(\alpha(V_1),\ldots,\alpha(V_i),\beta(V_i),\ldots,\beta(V_k), f(v)) \] and extend this map linearly. 
Following the proof in Hatcher's book \cite[Theorem~2.10]{Hatcher}, one can check that \[\partial P - P\partial = \beta_C - \alpha_C.\] Hence $P$ is a chain homotopy between $\alpha_C$ and $\beta_C$ and $\alpha_*=\beta_*$. 
\end{proof}

We have shown that any morphism of subspace arrangements induces a well-defined map on homology. A further application of the above proposition allows us to show that the homology $H_*(V,\mathcal{V})$ only depends on the maximal subspaces in $\mathcal{V}$.

\begin{proposition}\label{p:max}
Suppose that $\mathcal{V}$ is a subspace arrangement in $V$ such that each element of $\mathcal{V}$ is contained in a maximal element of $\mathcal{V}$ (this is true if $V$ is finite dimensional). Let $\mathcal{V}'$ be the family of maximal elements of $\mathcal{V}$. Then $H_*(V,\mathcal{V}) \cong H_*(V,\mathcal{V}')$.
\end{proposition}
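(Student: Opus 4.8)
The plan is to exhibit mutually inverse maps on homology induced by morphisms of subspace arrangements, and then apply Proposition~\ref{p:prism} to show the composites are the identity. The inclusion $\mathcal{V}'\hookrightarrow\mathcal{V}$ of arrangements (with the identity on $V$) is a morphism, inducing $j_*\colon H_*(V,\mathcal{V}')\to H_*(V,\mathcal{V})$. For the other direction, I would choose, for each $V'\in\mathcal{V}$, a maximal element $m(V')\in\mathcal{V}'$ with $V'\subset m(V')$ (possible by hypothesis); then $\mathrm{id}_V$ together with $m\colon\mathcal{V}\to\mathcal{V}'$ defines a morphism $(V,\mathcal{V})\to(V,\mathcal{V}')$, inducing $m_*\colon H_*(V,\mathcal{V})\to H_*(V,\mathcal{V}')$.

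Next I would check the two composites. The composite $m_*\circ j_*$ is induced by the morphism $(V,\mathcal{V}')\to(V,\mathcal{V}')$ underlying $\mathrm{id}_V$ with the subspace map $m|_{\mathcal{V}'}\colon\mathcal{V}'\to\mathcal{V}'$; since every element of $\mathcal{V}'$ is already maximal, $m|_{\mathcal{V}'}=\mathrm{id}_{\mathcal{V}'}$, so this composite is literally the identity chain map. The composite $j_*\circ m_*$ is induced by $\mathrm{id}_V$ together with the subspace map $\iota\circ m\colon\mathcal{V}\to\mathcal{V}$, where $\iota\colon\mathcal{V}'\hookrightarrow\mathcal{V}$; this is a map $\alpha$ with $V'\subset\alpha(V')$ for all $V'$. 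But the identity map $\mathrm{id}_V\colon(V,\mathcal{V})\to(V,\mathcal{V})$ also admits the choice $\beta=\mathrm{id}_{\mathcal{V}}$, which clearly satisfies $V'\subset\beta(V')$ and induces the identity on homology. By Proposition~\ref{p:prism}, $\alpha_*=\beta_*=\mathrm{id}$, so $j_*\circ m_*=\mathrm{id}$. Hence $j_*$ and $m_*$ are mutually inverse isomorphisms.

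The only genuinely delicate point is the bookkeeping in low degrees and the parenthetical claim that the hypothesis holds when $V$ is finite-dimensional; the latter is an easy observation (any ascending chain of subspaces of $V$ stabilizes by dimension count, so every element sits under a maximal one — or one invokes Zorn's lemma in general, though here finite dimension suffices for all applications). For the low-degree bookkeeping I would note that on $C_0=V$ both induced maps are $\mathrm{id}_V$, and on $C_1=\bigoplus_{V'\in\mathcal{V}}V'$ versus $\bigoplus_{V'\in\mathcal{V}'}V'$ the maps $j_C$ and $m_C$ are the evident inclusion and the evident collapse (sending the summand indexed by $V'$ identically into the summand indexed by $m(V')$), and these are compatible with $\partial_1$ since $\partial_1$ is just the sum map in each case. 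I expect Proposition~\ref{p:prism} to do essentially all the work, so the main (minor) obstacle is simply making sure the morphism-of-arrangements formalism is applied to the correct underlying linear map $\mathrm{id}_V$ in each of the two directions.
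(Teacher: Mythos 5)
Your proposal is correct and follows essentially the same route as the paper: define the inclusion $\mathcal{V}'\hookrightarrow\mathcal{V}$ and a choice of maximal-element map $\mathcal{V}\to\mathcal{V}'$, observe one composite is the identity on the nose (since a maximal element can only sit under itself), and invoke Proposition~\ref{p:prism} to show the other composite is the identity on homology. The paper's $\alpha$ and $\beta$ play exactly the roles of your $j$ and $m$, and the application of the prism-operator proposition to the identity morphism of $(V,\mathcal{V})$ with the two subspace maps $\mathrm{id}_{\mathcal V}$ and $\iota\circ m$ is precisely the paper's step.
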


\begin{proof}
Let $\alpha\colon\mathcal{V'} \to \mathcal{V}$ be the natural injection of $\mathcal{V'}$ into $\mathcal{V}$. We may also choose a map $\beta\colon\mathcal{V} \to \mathcal{V'}$ by picking a maximal subspace $\beta(W)$ containing each element $W \in \mathcal{V}$. Let $\alpha_C$ and $\beta_C$ be the induced maps on chain complexes with respect to the identity map from $f\colon V \to V$ to itself. Note that $\beta \circ \alpha$ is the identity map on $\mathcal{V'}$, and it follows that $\beta_C \circ \alpha_C$ is the identity map on $C_*(V,\mathcal{V}')$. Hence $\beta_* \circ \alpha_*$ induces the identity map on $H_*(V,\mathcal{V}')$. In the other direction, $\alpha_C \circ \beta_C$ is the map from $C_*(V,\mathcal{V})$ to itself induced by the map $\alpha\beta\colon \mathcal{V} \to \mathcal{V}$. 
We may apply Proposition~\ref{p:prism} to the identity morphism $f=\mathrm{id}_V\colon (V,\mathcal{V}) \to (V,\mathcal{V})$.
Here we take $\alpha'\colon \mathcal{V}\to\mathcal{V}$ to be the identity map on the family $\mathcal V$ and take map $\beta'=\alpha\beta \colon \mathcal{V} \to \mathcal{V}$; the Proposition implies $\alpha_*'=\beta_*'$.  
As $\alpha'_*$ is the identity map, so is $\beta'_*=\alpha_*\circ \beta_*$. It follows that $\alpha_*$ and $\beta_*$ are isomorphisms.
\end{proof}

\begin{corollary}\label{c:trivial} 
If $V \in \mathcal{V}$ then $H_*(V,\mathcal{V})$ is trivial.
\end{corollary}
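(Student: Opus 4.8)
The plan is to reduce immediately to the arrangement consisting of the single subspace $V$ and then compute by hand. If $V \in \mathcal{V}$, then $V$ is the unique maximal element of $\mathcal{V}$ and every element of $\mathcal{V}$ is contained in it, so the hypothesis of Proposition~\ref{p:max} is satisfied (no finite-dimensionality is needed, since $V$ itself is the containing maximal subspace for everything). Thus the family of maximal elements is $\mathcal{V}'=\{V\}$, and Proposition~\ref{p:max} gives $H_*(V,\mathcal{V})\cong H_*(V,\mathcal{V}')=H_*(V,\{V\})$.

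It then remains to describe the chain complex $C_*(V,\{V\})$. Here $C_0=V$ and $C_1=V$ (the direct sum over the single-element family), while $C_k=0$ for $k\geq 2$ because any tuple $(V_1,\dots,V_k,v)$ with $k\geq 2$ has $V_i=V_j=V$ for some $i\neq j$ and so is zero. The boundary map $\partial_1\colon C_1\to C_0$ is $v\mapsto v$, i.e.\ the identity, so the complex is exact: $H_0(V,\{V\})\cong V/\vspan\{V\}=0$ and $\ker\partial_1=0$ forces $H_1=0$, with all higher homology vanishing trivially. I do not anticipate any real obstacle; the only point requiring a moment's care is checking that Proposition~\ref{p:max} is legitimately applicable in the possibly infinite-dimensional case, which it is for the reason noted above.
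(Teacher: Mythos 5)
Your proposal is correct and follows the same route as the paper: apply Proposition~\ref{p:max} to reduce to the single-subspace arrangement $\{V\}$, whose chain complex is easily seen to be exact. The paper's proof is just the one-line version of your argument.
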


\begin{proof}
This follows from the above as $H_*(V,\{V\})$ is trivial and isomorphic to $H_*(V,\mathcal{V})$.
\end{proof}

\begin{remark}
It is possible to characterize $H_*(V,\mathcal{V})$ as a derived functor.
We do not use this in this paper, but we outline it in this remark.

Consider a category $\mathcal{C}$ of subspace arrangements with a fixed index set $J$, whose morphisms are linear maps that send the $j$th subspace into the $j$th subspace for each $j\in J$ (this is much more restrictive than the definition we use above).
This category $\mathcal{C}$ is an additive category, but not an abelian category because epimorphisms and monomorphisms are not necessarily normal.
We consider the category $\mathcal{D}$ of cubical diagrams of vector spaces;
this is the functor category from the opposite category of the category of subsets of $J$ (with inclusions) to the category of vector spaces over $\mathbb{K}$.
It turns out that $\mathcal{C}$ embeds in $\mathcal{D}$ by sending an arrangement to the diagram of inclusions of intersections of subspaces in the arrangement.

It follows from standard arguments that $\mathcal{D}$ is an abelian category, and it is possible to show that every object is a quotient of a projective object.
The $H_0$ functor we define above corresponds to a functor $H_0$ from $\mathcal{D}$ to vector spaces.
Specifically, if an object $(V,f)$ of $\mathcal{D}$ is given by $\{V_S\}_{S\subset J}$ and $\{f_{S,T}\colon V_S\to V_T\}_{T\subset S\subset J}$, then
\[H_0((V,f))=V_{\varnothing}/\vspan(\{f_{\{j\},\varnothing}(V_{\{j\}})\}_{j\in J}).\]
This functor turns out to be right-exact.
Our homology theory functors are then the left-derived functors of the functor $H_0$.
\end{remark}

\subsection{Inclusion-exclusion}
Our next statement has a connection to the inclusion-exclusion principle, which we explain in the following remark.
\begin{remark}
Recall that the inclusion-exclusion principle allows us to count a finite union of sets $\{S_j\}_{j\in J}$ by taking an alternating sum of the counts of the intersections of these sets:
\[\Big|\bigcup_{j\in J} S_j\Big|=\sum_{k=1}^{|J|}(-1)^{k+1}\left(\sum_{I\subset J,|I|=k} \Big|\bigcap_{j\in I}S_j\Big|\right).\]
One might hypothesize an analogous statement for vector spaces, asserting that the dimension of a span of vector subspaces $\{V_j\}_{j\in J}$ is an alternating sum of the dimensions of the intersections:
\[\dim\vspan(\{V_j\}_{j\in J})\stackrel{?}{=}\sum_{k=1}^{|J|}(-1)^{k+1}\left(\sum_{I\subset J,|I|=k} \dim\Big(\bigcap_{j\in I}V_j\Big)\right).\]

This is famously false, although it holds in many simple examples.
It fails in different ways in Examples~\ref{example:h1nontrivial} and~\ref{example:h2nontrivial}, by overcounting in the first one and undercounting in the second one.

Suppose $(V,\mathcal{V})$ is an arrangement where $V$ is finite-dimensional and the subspaces in $\mathcal{V}$ span $V$ (in other words, $H_0(V,\mathcal{V})=0$).
For such an arrangement, the validity of the ``inclusion-exclusion principle for vector spaces" is equivalent to the vanishing of the Euler characteristic of $H_*(V,\mathcal{V})$.
We do not use this fact, but we leave it as an exercise for the interested reader.
\end{remark}

We do require one result that is related to inclusion-exclusion.
We are interested in the case where all subspaces in our collection $\mathcal{V}$ are generated by subsets of a fixed basis for $V$.
(We will see below that this is true for BNS invariants of RAAGs.)
In this case, if $\dim(V)$ is finite, then inclusion-exclusion clearly holds.
This means that the alternating sum in the remark above is $\dim(\vspan(\mathcal{V}))$, and that the Euler characteristic of $H_*(V,\mathcal{V})$ is 
\[\dim V -\dim \vspan(\mathcal{V})=\dim H_0(V,\mathcal{V}).\]
In fact, more is true: in this special case, the homology is trivial, except possibly for $H_0(V,\mathcal{V})$.
Our proposition refines a lemma of Koban--Piggott~\cite{KP}, which uses an inclusion-exclusion sum involving the BNS invariant of a RAAG to count the number of non-central vertices in the defining graph.
We state and prove our proposition assuming that $V$ is finite-dimensional, although this can be easily extended to the general case.

\begin{proposition}\label{p:trivial}
Let $V$ be a vector space with basis $S=\{s_1,\ldots,s_n\}$ and let $\mathcal{V}$ be a collection of subspaces of $V$ such that each $V'\in\mathcal{V}$ is spanned by a subset of $S$. Then $H_n(V,\mathcal{V})=0$ for all $n \geq 1$.
\end{proposition}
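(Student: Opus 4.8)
The plan is to exploit the coordinate-subspace hypothesis to split $C_*(V,\mathcal{V})$ as a direct sum of subcomplexes indexed by the basis $S$, and to recognize each summand as an augmented simplicial chain complex of a simplex, which is acyclic in the relevant degrees.

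First I would reduce to the case that $\mathcal{V}$ is finite: since $V$ is finite-dimensional there are only finitely many subspaces spanned by subsets of $S$, so each element of $\mathcal{V}$ lies in a maximal one, and Proposition~\ref{p:max} lets us replace $\mathcal{V}$ by its finite family of maximal elements. Write $\mathcal{V}=\{V_1,\dots,V_m\}$ with $V_i=\vspan(S_i)$, $S_i\subseteq S$, and fix the induced ordering on the index set so that we may use the description $C_k=\bigoplus_{J}V_J$ from the finite case, where $J$ ranges over $k$-subsets of $\{1,\dots,m\}$ and $V_J=\bigcap_{j\in J}V_j$. The key elementary point is that an intersection of coordinate subspaces is again a coordinate subspace: $V_J=\vspan\bigl(\bigcap_{j\in J}S_j\bigr)$, so $V_J=\bigoplus_{s\in V_J}\vspan(s)$.

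Using this, for each $s\in S$ set $C_k^{(s)}=\bigoplus_{J:\, s\in V_J}\vspan(s)\subseteq\bigoplus_J V_J=C_k$ (and $C_0^{(s)}=\vspan(s)$). The previous paragraph gives $C_k=\bigoplus_{s\in S}C_k^{(s)}$, and since $\partial_k$ does not alter the last coordinate of a generator it preserves each $C_*^{(s)}$; hence $H_*(V,\mathcal{V})\cong\bigoplus_{s\in S}H_*(C_*^{(s)})$, and it suffices to prove $H_n(C_*^{(s)})=0$ for $n\geq1$. Now fix $s$ and let $\mathcal{J}_s=\{j: s\in V_j\}$. Identifying the summand of $C_k^{(s)}$ attached to $J$ with the $k$-subset $J\subseteq\mathcal{J}_s$, the complex $C_*^{(s)}$ becomes exactly the augmented simplicial chain complex of the full simplex $\sigma_s$ on the vertex set $\mathcal{J}_s$, shifted up by one degree (so that $C_0^{(s)}$ is the augmentation, in simplicial degree $-1$); in particular $H_k(C_*^{(s)})\cong\widetilde H_{k-1}(\sigma_s)$. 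A nonempty simplex is contractible, so $\widetilde H_*(\sigma_s)=0$ in all degrees; and if $\mathcal{J}_s=\varnothing$ then $\sigma_s=\varnothing$ and $\widetilde H_i(\varnothing)=0$ for $i\geq0$. In either case $\widetilde H_{k-1}(\sigma_s)=0$ for all $k\geq1$, giving $H_n(C_*^{(s)})=0$ for $n\geq1$ and hence $H_n(V,\mathcal{V})=0$ for $n\geq1$.

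I expect the work to be bookkeeping rather than conceptual: the point that needs care is that the splitting $C_*=\bigoplus_s C_*^{(s)}$ really is a splitting of \emph{chain complexes} — this is exactly where the coordinate-subspace hypothesis is used, since a general arrangement respects no basis (compare Example~\ref{example:h1nontrivial}) — together with getting the degree shift in the identification of $C_*^{(s)}$ with the simplicial complex right. If one prefers not to invoke contractibility of the simplex, one can instead write down the cone homotopy directly: fixing $j_0\in\mathcal{J}_s$ and sending a subset $J\not\ni j_0$ to $J\cup\{j_0\}$ (with the appropriate sign, and to $0$ if $j_0\in J$) defines an explicit $h$ with $\partial h+h\partial=\mathrm{id}$ in positive degrees.
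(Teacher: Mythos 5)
Your proof is correct, and it takes a genuinely different route from the paper's. The paper argues by induction on $\dim V$: it projects $V$ onto the hyperplane $P$ spanned by $\{s_1,\ldots,s_{n-1}\}$, shows this induces a surjection of chain complexes $C_*(V,\mathcal{V})\to C_*(P,\mathcal{P})$ whose kernel is $C_*(Q,\mathcal{Q})$ with $Q=\langle s_n\rangle$, and then applies the resulting long exact sequence together with the inductive hypothesis. Your argument instead splits the whole chain complex at once: because each $V_J$ is a coordinate subspace, $C_*(V,\mathcal{V})$ decomposes as $\bigoplus_{s\in S}C_*^{(s)}$, and (as you note) this really is a decomposition into subcomplexes since the boundary map fixes the ``$v$'' slot and the inclusions $V_J\hookrightarrow V_{J_i}$ respect the coordinate decomposition. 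Identifying each $C_*^{(s)}$ with the (degree-shifted) augmented simplicial chain complex of the full simplex on $\mathcal{J}_s=\{j:s\in V_j\}$ then reduces the claim to the acyclicity of a simplex, handling $\mathcal{J}_s=\varnothing$ separately. This has the advantage of being non-inductive and of exhibiting the chain complex's structure transparently; indeed it recovers the $H_0$ computation for free, since $H_0(V,\mathcal{V})\cong\bigoplus_{s:\mathcal{J}_s=\varnothing}\vspan(s)$ is spanned exactly by the basis vectors appearing in no subspace of $\mathcal{V}$. The paper's inductive argument is shorter to write but less structurally revealing. One small bookkeeping point you handled correctly but is worth stressing: the reduction to a finite arrangement via Proposition~\ref{p:max} also lets you assume the subspaces in $\mathcal{V}$ are pairwise distinct, which sidesteps the indexing subtleties of Remark~\ref{r:repeatedsubspaces} when reading off the finite-case description $C_k=\bigoplus_J V_J$.
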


\begin{proof}
We use induction on the dimension of $V$. 
When $\dim(V)=1$, either all spaces in the collection $\mathcal{V}$ are trivial, or $V \in\mathcal{V}$. 
The result then follows from Corollary~\ref{c:trivial}. 
Now suppose the result holds for all such arrangements in vector spaces of dimension $n-1$. 
Let $V$ and $\mathcal{V}=\{V_i\}_{i\in I}$ be as in the statement of the theorem with basis $S=\{s_1,\ldots,s_n\}$. 
Let $P$ be the subspace spanned by $S'=\{s_1,\ldots,s_{n-1}\}$ and let $p\colon V\to P$ be the projection given by \[p\Big(\sum_{i=1}^n \lambda_is_i\Big)=\sum_{i=1}^{n-1}\lambda_is_i.\] 
Let \[\mathcal{P}=\{P_i=p(V_i):i \in I\}\] be the projected subspace arrangement in $P$. 
Let $Q = \langle s_n \rangle$ and let \[\mathcal{Q}=\{Q_i=Q\cap V_i: i \in I\}\] be the induced subspace arrangement in $Q$. Note that for both $(P,\mathcal{P})$ and $(Q,\mathcal{Q})$ we allow for repetitions of subspaces as described in Remark~\ref{r:repeatedsubspaces}.

Let $\alpha_C\colon C_*(V,\mathcal{V}) \to C_*(P,\mathcal{P})$ be the induced map on chain complexes coming the from projection $p\colon V \to P$ and the map $\alpha\colon \mathcal{V} \to \mathcal{P}$ given by $\alpha(V_i)=P_i$. 
The element $(V_1,\ldots,V_k,v)$ is mapped to $(P_1,\ldots,P_k,p(v))$ under $\alpha_C$. 
If $w \in P_1 \cap \cdots \cap P_k$, there exists $v \in V_1 \cap \cdots \cap V_k$ with $p(v)=w$. It follows that $\alpha_C$ is surjective. The kernel chain complex of $\alpha_C$ is spanned in $C_k(V,\mathcal{V})$ by elements of the form $(V_1,\ldots,V_k,\lambda s_n)$, and in $C_0(V,\mathcal{V})=V$ the kernel is the subspace $Q=\langle s_n \rangle$. This kernel chain complex is naturally isomorphic to $C_*(Q,\mathcal{Q})$. We then have a short exact sequence of chain complexes 
\[ 0 \to C_*(Q,\mathcal{Q}) \to C_*(V,\mathcal{V}) \stackrel{\alpha_C}{\longrightarrow} C_*(P,\mathcal{P}) \to 0 \]
which induces the long exact sequence in homology \[ \cdots \to H_{k}(Q,\mathcal{Q}) \to H_k(V,\mathcal{V}) \to H_k(
P,\mathcal{P}) \to \cdots \]
As each vector space in in $\mathcal{V}$ is spanned by a subset of $S$, each element of $\mathcal{P}$ is spanned by a subset of $S'$. Hence both $(P,\mathcal{P})$ and $(Q,\mathcal{Q})$ are subspace arrangements where each subspace is spanned by a fixed subset of some basis. For $k \geq1$, the space $H_k(P,\mathcal{P})$ is trivial by the inductive hypothesis and $H_k(Q,\mathcal{Q})$ is trivial by the dimension 1 case. This implies that $H_k(V,\mathcal{V})$ is trivial for $k \geq 1$ also.
\end{proof}

\section{BNS invariants and subspace arrangements} \label{s:bns}

\subsection{BNS invariants}
\label{ss:bns}
The Bieri--Neumann--Strebel invariant is a subset $\Sigma$ of the character sphere of a finitely generated group $G$. The character sphere $S$ of $G$ is the set \[(\hom(G;\R) \setminus \{0\}) / \sim\]
where characters are identified if they lie in the same ray in $\hom(G;\R)$: $\chi_1\sim\chi_2$ if and only if there is $\lambda>0$ with $\chi_1=\lambda\chi_2$.
The original definition of the BNS invariant from~\cite{BNS} states that $[\chi]\in S$ is in $\Sigma$ if and only if $[G,G]$ is finitely generated over a finitely generated submonoid of $\chi^{-1}([0,\infty))$.
Bieri--Neumann--Strebel also give a convenient characterization in terms of a generating set in Proposition~2.3 of~\cite{BNS}: 
 $[\chi]\in S$ is in $\Sigma$ if and only if the preimage under $\chi$ of the closed half-line $[0,\infty)$ in the Cayley graph of $G$ is connected.
We do not use the original definition or the equivalent one from the original paper; instead we prefer another equivalent definition due to Brown that we state below.

\begin{remark}
Sometimes $\Sigma$ is viewed as the first invariant in a collection $\Sigma=\Sigma^1 \supset \Sigma^2 \supset \Sigma^3 \supset \cdots$ (see \cite{BNS}). We will not be considering these higher invariants in this paper.
\end{remark}

Recall that an $\R$-tree is a geodesic metric space in which a unique arc connects any two points.
An action of $G$ on an $\R$-tree $T$ is \emph{abelian} if there exists a character $\chi$ such that $|\chi(g)|=\|g\|_T$ for all $g \in G$, where $\|g\|_T$ is the translation length of $g$ as an isometry of $T$. We say that $T$ \emph{realizes} $\chi$. Note that for each $\chi$ there is a natural abelian action of $G$ on a line realizing $\chi$. Any abelian action realizing a nontrivial character fixes one or two points in the boundary $\partial T$ of $T$. When there is a unique fixed point in $\partial T$ we say that the action is \emph{exceptional}. 

Let $T$ be an exceptional action realizing a character $\chi$ with fixed end $e \in \partial T$. Let $(g_n)$ be a sequence of elements of $G$ such that for some (equivalently, any) point $x \in T$ the orbit $g_n \cdot x$ converges to $e$. The sequence $(\chi(g_n))$ converges to either $+\infty$ or $-\infty$. We say that \emph{the invariant end is at $+\infty$} in the former case, and $-\infty$ in the latter. This is independent of any choices made above. Swapping $\chi$ with $-\chi$ will then swap the location of the invariant end. The following definition of $\Sigma$ is due to Brown \cite{Brown}, who showed that it is equivalent to the original definition from \cite{BNS}. 

\begin{definition}
An element $[\chi] \in S$ is in $\Sigma$ if there exists no exceptional action of $G$ on an $\R$-tree $T$ realizing $\chi$ with the invariant end at $-\infty$.
\end{definition}

Note that Brown's definition allows one to consider $\Sigma$ even in the case that $G$ is not finitely generated. 

Rather than considering the BNS invariant as a subset of the character sphere, for most of the paper we will consider the preimage of $\Sigma$ in $\hom(G;\R)$. Let \[p\colon (\hom(G;\R)-\{0\}) \to S\] be the quotient map to the character sphere.
We say that $\chi \in \hom(G;\R)$ \emph{lies in the complement of the BNS invariant} if $\chi \not \in p^{-1} (\Sigma)$. 
The complement of the BNS invariant may then be viewed as a subspace arrangement in $\hom(G;\R)$.

\begin{definition}
Let $G$ be group.
We define $\mathcal{V}_G$ to be the set of maximal subspaces in $\hom(G;\R)$ contained in the complement of the BNS invariant.
We define $H_*(\mathcal{V}_G)$ to be the subspace arrangement homology $H_*(V,\mathcal{V}_G)$, where our ambient space $V$ is always $\hom(G;\R)$.
\end{definition} 

More generally, we can consider the collection of all subspaces of $\hom(G;\R)$ in $p^{-1}(\Sigma^c)\cup\{0\}$; Proposition~\ref{p:max} shows that this gives the same homology spaces as the collection of maximal subspaces $\mathcal{V}_G$.

\begin{remark}
Recall that the BNS invariant $\Sigma$ of a group $G$ is \emph{symmetric} if $\Sigma=-\Sigma$, meaning that it is invariant under the antipodal map. In this case, each character $\chi$ with $[\chi] \in \Sigma^c$ determines an entire line in $p^{-1}(\Sigma^c)\cup\{0\}$. As $\chi$ is contained in some subspace of $p^{-1}(\Sigma^c)\cup\{0\}$, it is also contained in a maximal one. Hence $p^{-1}(\Sigma^c)\cup\{0\}$ is exactly the union of the elements of $\mathcal{V}_G$. Conversely, if $\Sigma$ is not symmetric then $\cup \mathcal{V}_G$ is a proper subset of $p^{-1}(\Sigma^c)\cup\{0\}$. 
Even if $\Sigma$ is symmetric and $\hom(G;\R)$ is finite dimensional, as far as we know it is still possible for $\mathcal{V}_G$ to be an infinite family.  
\end{remark}

\begin{remark}
One can instead take the larger family $\mathcal{V}^+_G$ spanned by characters $\chi$ which are realized by some exceptional action on an $\mathbb{R}$-tree (in other words, either $\chi$ or $-\chi$ lies in $p^{-1}(\Sigma^c)$). One can view $\mathcal{V}_G$ as the arrangement obtained by removing characters corresponding to $\Sigma \cup -\Sigma$ from $V$, whereas for $\mathcal{V}_G^+$ one only removes characters corresponding to elements of $\Sigma\cap-\Sigma$. When $\Sigma$ is non-symmetric there are examples where $H_*(\mathcal{V}_G,V)$ and $H_*(\mathcal{V}^+_G,V)$ are different (such examples can be found in \cite{BNS, Brown}).
\end{remark}

\subsubsection{Maps between groups}

When $f\colon G\to H$ is a surjective homomorphism, an exceptional abelian action of $H$ on a tree induces an exceptional abelian action of $G$. This does not change the location of the invariant end with respect to the characters $\chi:H \to \mathbb{R}$ and $f^*(\chi):G \to \mathbb{R}$. Hence we have the following well-known fact:

\begin{proposition}\label{p:char}
Let $f\colon G \to H$ be a surjective map and \[f^*\colon \hom(H;\R) \to \hom(G;\R)\] the induced map on character spaces. If $\chi \in \hom(H;\R)$ is in the complement of the BNS invariant of $H$, then $f^*(\chi)=\chi \circ f$ is in the complement of the BNS invariant of $G$.
\end{proposition}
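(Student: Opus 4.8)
The plan is to reduce the statement to a direct comparison of exceptional abelian $\R$-tree actions, using Brown's characterization of $\Sigma$ stated above. Recall that $f^*(\chi)=\chi\circ f$ lies in the complement of $\Sigma(G)$ precisely when there exists an exceptional action of $G$ on an $\R$-tree $T$ realizing $\chi\circ f$ with the invariant end at $-\infty$; I want to produce such an action from the corresponding action of $H$ realizing $\chi$. Since $\chi$ is assumed to be in the complement of $\Sigma(H)$, there is an exceptional action of $H$ on some $\R$-tree $T$ realizing $\chi$ with invariant end at $-\infty$.

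The first step is to pull this action back along $f$: the surjection $f\colon G\to H$ lets $G$ act on $T$ via $g\cdot x := f(g)\cdot x$, and this is an action by isometries on the same $\R$-tree. The second step is to check that the pulled-back action realizes $\chi\circ f$: for every $g\in G$ the translation length of $g$ equals the translation length of $f(g)$ as an isometry of $T$, which is $|\chi(f(g))| = |(\chi\circ f)(g)|$, so $T$ realizes $\chi\circ f$ in the sense of the definition above. The third step is to verify that the pulled-back action is still \emph{exceptional} with invariant end at $-\infty$. The fixed set in $\partial T$ is unchanged because $G$ and $H$ have the same image in $\mathrm{Isom}(T)$, so there is still a unique fixed end $e$. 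For the location of the invariant end, pick a point $x\in T$ and a sequence $(h_n)$ in $H$ with $h_n\cdot x\to e$; by surjectivity of $f$ choose $g_n\in G$ with $f(g_n)=h_n$, so $g_n\cdot x = h_n\cdot x\to e$, and $(\chi\circ f)(g_n)=\chi(h_n)\to -\infty$. Hence the invariant end of the $G$-action is also at $-\infty$.

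Combining these steps, $\chi\circ f$ admits an exceptional action of $G$ realizing it with invariant end at $-\infty$, so by Brown's definition $[f^*(\chi)]$ is \emph{not} in $\Sigma(G)$, i.e. $f^*(\chi)$ lies in the complement of the BNS invariant of $G$; note also $f^*(\chi)\neq 0$ since $f$ is surjective and $\chi\neq 0$, so the reduced element $[f^*(\chi)]$ in the character sphere makes sense. The only genuinely delicate point is the bookkeeping around the invariant end: one must confirm that the notion ``invariant end at $-\infty$'' is intrinsic to the action and the character and does not depend on the chosen basepoint or orbit sequence — but this independence is exactly what was asserted when the notion was defined in the preamble, so it can be invoked directly. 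Everything else is routine unwinding of definitions, which is why the statement is labelled well-known.
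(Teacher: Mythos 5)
Your proof is correct and follows essentially the same approach the paper sketches just before stating the proposition: pull the exceptional abelian $\R$-tree action back along the surjection $f$, observe that translation lengths and the fixed end in $\partial T$ are unchanged since $G$ and $H$ share an image in $\mathrm{Isom}(T)$, and use surjectivity to transfer the ``invariant end at $-\infty$'' condition from $\chi$ to $\chi\circ f$. The paper states this only as a two-sentence sketch and cites it as well known; you have simply expanded that sketch into a complete argument, including the minor but necessary observation that $f^*(\chi)\neq 0$ when $\chi\neq 0$.
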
 

It follows that if $f\colon G\to H$ is surjective, 
then $f$ induces a morphism of subspace arrangements\[f^*\colon (\hom(H;\R),\mathcal{V}_H) \to (\hom(G;\R),\mathcal{V}_G). \] 
This in turn gives a map $(f^*)_*\colon H_*(\mathcal{V}_H) \to H_*(\mathcal{V}_G)$ on homology as described in Section~\ref{s:funct}, although we will not need this in the work that follows.

To summarize, we have defined
\[G\mapsto H_*(\mathcal{V}_G),\]
a contravariant functor from the category of groups with \emph{surjective} homomorphisms to the category of graded vector spaces over $\R$.
Such a thing superficially resembles a cohomology theory of groups.
It would be interesting to characterize this invariant in terms of cohomology.

\subsection{Right-angled Artin groups}

Suppose that $G$ is a right-angled Artin group $A_\G$. For a vertex $a$ of $\G$, let $\chi_a \colon A_\G \to \R$ be the character defined on generators by \[ \chi_a(v) = \begin{cases} 1 &\text{if $v=a$}\\ 0& \text{if $v \neq a$}\end{cases} \] The abelianization of $A_\G$ is a free abelian group generated by the images of the vertices in $H_1(A_\G;\Z)$ and the characters $\chi_a$ define a basis of $\hom(G;\R)$.

For any character $\chi \in \hom(A_\G;\R)$, we define the \emph{support} $\supp(\chi)$ to be the full subgraph of $\G$ spanned by the vertices $v$ such that $\chi(v) \neq 0$. The support is \emph{dominating} if every vertex in $\G$ is either contained in, or adjacent to, a vertex in $\supp(\chi)$. 

\begin{theorem}[Meier--VanWyk, \cite{MVW}]\label{t:MVW}
Let $\chi \in \hom(A_\G;\R) -\{0\}$. Then $[\chi] \in \Sigma(A_\G)$ if and only if $\supp(\chi)$ is connected and dominating.
\end{theorem}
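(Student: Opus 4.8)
\textbf{Proof proposal for Theorem~\ref{t:MVW} (Meier--VanWyk).}

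The plan is to establish both directions using Brown's characterization of the BNS invariant in terms of exceptional abelian actions on $\R$--trees, combined with an analysis of the structure of $A_\G$ as an iterated amalgam over stars of vertices. For the ``only if'' direction, I would argue the contrapositive: suppose $\supp(\chi)$ is either disconnected or non-dominating, and construct an exceptional action realizing $\chi$ with the invariant end at $-\infty$. If $\supp(\chi)$ is non-dominating, there is a vertex $v$ not in $\supp(\chi)$ and not adjacent to any vertex of $\supp(\chi)$; then $\chi$ factors through the quotient of $A_\G$ that kills everything outside $\lk(v)\cup\{v\}$-complementary pieces, and one can build a tree on which the ``$v$-part'' of the group acts with a fixed end. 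If $\supp(\chi)$ is disconnected, writing $\supp(\chi)=\Theta_1\sqcup\Theta_2$ (as a separation in $\G$ restricted to the support, noting there may still be edges to the rest of $\G$), one decomposes $A_\G$ accordingly and plays the two ``halves'' against each other to produce a fixed end. In both cases the key technical input is the Bass--Serre / graph-of-groups picture for $A_\G$ splitting over $A_{\st(v)}$ for a suitable $v$, from which an exceptional action is assembled by hand.

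For the ``if'' direction, assume $\supp(\chi)$ is connected and dominating; I want to show $[\chi]\in\Sigma$, i.e.\ no exceptional action realizing $\chi$ has its invariant end at $-\infty$. Here the cleanest route is the generating-set criterion of Bieri--Neumann--Strebel (Proposition~2.3 of \cite{BNS}, quoted in Section~\ref{ss:bns}): $[\chi]\in\Sigma$ iff the subgraph of the Cayley graph of $A_\G$ (with respect to the vertex generating set) on $\chi^{-1}([0,\infty))$ is connected. So I would take an arbitrary element $g$ with $\chi(g)\geq 0$ and produce an explicit edge-path from $g$ back to $1$ staying in the non-negative half-space. Pick a vertex $a_0\in\supp(\chi)$ with $\chi(a_0)>0$; since $\supp(\chi)$ is connected, any vertex $w\in\supp(\chi)$ is connected to $a_0$ by a path in $\supp(\chi)$, and since $\supp(\chi)$ is dominating, any vertex $w\notin\supp(\chi)$ either lies in $\supp(\chi)$ (vacuous) or is adjacent to some $u\in\supp(\chi)$, hence commutes with $u$. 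The strategy is then: given a reduced word for $g$, push it toward $1$ by repeatedly multiplying on the right by a generator; whenever the character value is in danger of going negative, first ``spend'' some of the reservoir at $a_0$ by inserting a detour $a_0^{\pm 1}$ conjugated appropriately through commuting relations, moving the high-value generator next to whatever needs cancelling. Making this bookkeeping precise is the crux of the argument.

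The main obstacle I anticipate is exactly that bookkeeping in the ``if'' direction: one must show the Cayley-graph-level connectivity uniformly, handling the interaction between the non-abelian structure of $A_\G$ and the need to stay in $\chi^{-1}([0,\infty))$, and the naive ``move a token to where it is needed'' argument requires care to ensure each intermediate vertex of the path has non-negative $\chi$--value and that the detours actually compose to the identity. A cleaner alternative that sidesteps some of this is to use the fact (already available in the literature on RAAGs and Bass--Serre theory) that when $\supp(\chi)$ is connected and dominating, $\ker(\chi)\cap[A_\G,A_\G]$ is finitely generated over the appropriate monoid because $A_\G$ is built up from the connected dominating support by HNN extensions and amalgamations along finitely generated (indeed RAAG) subgroups that are themselves controlled by the restriction of $\chi$; one then induces on the number of vertices of $\G$ outside $\supp(\chi)$, using that adding a dominated vertex is an ascending HNN-type move that preserves the relevant finiteness. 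Either way, the structural decomposition of $A_\G$ over vertex-stars is the engine, and checking the finiteness/connectivity condition survives each step is where the real work lies.
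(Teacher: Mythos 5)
The paper does not prove this theorem: it is quoted from Meier--VanWyk~\cite{MVW} and used as a black box, so there is no in-paper proof to compare your attempt against. Evaluated on its own terms, your proposal identifies the right tools --- Brown's characterization, the Bieri--Neumann--Strebel Cayley-graph criterion, and the graph-of-groups structure of $A_\G$ over vertex stars --- and this is essentially the architecture of the actual Meier--VanWyk argument. But as written it is a plan, not a proof, and the genuine content of both directions is exactly what you defer.

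In the ``only if'' direction, ``build a tree on which the $v$-part acts with a fixed end'' and ``play the two halves against each other'' are not arguments; the clean version is to exhibit a surjection onto a nontrivial free product (or an amalgam or HNN extension over a subgroup contained in $\ker(\chi)$) through which $\chi$ factors --- compare the role Lemma~\ref{l:freeprodchar} plays later in the paper --- and to verify that the splitting is genuinely nontrivial, e.g.\ when $\supp(\chi)$ is non-dominating there is a vertex $v$ with $\st(v)\subset\ker(\chi)$ and $A_\G$ splits over $A_{\lk(v)}$. In the ``if'' direction you correctly name the Cayley-graph connectivity criterion and correctly flag the bookkeeping as the crux, but that bookkeeping \emph{is} the theorem: Meier and VanWyk carry it out via a normal-form/induction argument over the vertices outside the living subgraph, and without executing something equivalent (or the alternative monoid-finiteness induction you mention) the implication is not established. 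So as it stands neither direction is proved.
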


\begin{proposition}
The set $\mathcal{V}_G$ of maximal subspaces in the complement of the BNS invariant is the set of vector spaces of the form \[V_S = \langle \chi_a : a\in S \rangle \]
for each maximal subset $S$ of vertices in $\G$ spanning a disconnected subgraph of $\G$.
\end{proposition}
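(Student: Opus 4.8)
The plan is to read $\mathcal{V}_G$ directly off the Meier--VanWyk description of $\Sigma(A_\G)$ in Theorem~\ref{t:MVW}. Write $\G[T]$ for the full subgraph of $\G$ spanned by a vertex set $T$, so that, since the $\chi_a$ form a basis, $\supp(\chi)=\G[\{a:\chi(a)\neq0\}]$ for every $\chi$. By Theorem~\ref{t:MVW}, a nonzero $\chi$ lies in $p^{-1}(\Sigma^c)$ exactly when $\G[\{a:\chi(a)\neq0\}]$ is disconnected or not dominating, so the whole question becomes combinatorics of full subgraphs of the finite graph $\G$; in particular maximal subspaces contained in $p^{-1}(\Sigma^c)\cup\{0\}$ exist.

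First I would check that each coordinate subspace $V_S$ with $\G[S]$ disconnected lies in $p^{-1}(\Sigma^c)\cup\{0\}$. Given nonzero $\chi\in V_S$, set $T=\{a\in S:\chi(a)\neq0\}$, so $\varnothing\neq T\subseteq S$ and $\supp(\chi)=\G[T]$. If $\G[T]$ is disconnected we are done; if $\G[T]$ is connected then $T$ lies in a single connected component of $\G[S]$, and any vertex $v$ in a different component of $\G[S]$ is neither in $T$ nor adjacent to a vertex of $T$ (an edge from $v$ to $T$ would join two components of $\G[S]$), so $\G[T]=\supp(\chi)$ is not dominating. Either way $[\chi]\in\Sigma^c$.

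Next I would prove the converse: every subspace $W\subseteq p^{-1}(\Sigma^c)\cup\{0\}$ is contained in some $V_S$ with $\G[S]$ disconnected (the case $W=\{0\}$ being trivial). Let $U=\{a\in V(\G):\chi(a)\neq0\text{ for some }\chi\in W\}$, so $W\subseteq V_U$; since a nonzero real vector space is not a finite union of proper subspaces, there is $\chi_0\in W$ with $\supp(\chi_0)=\G[U]$. As $[\chi_0]\in\Sigma^c$, the graph $\G[U]$ is disconnected or not dominating. If it is disconnected, take $S=U$. Otherwise pick a vertex $v\notin U$ non-adjacent to all of $U$; then $\{v\}$ is a connected component of $\G[U\cup\{v\}]$, which is therefore disconnected, and $W\subseteq V_U\subseteq V_{U\cup\{v\}}$, so take $S=U\cup\{v\}$. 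Finally enlarge $S$ to a vertex set maximal among those spanning a disconnected subgraph; this preserves both $W\subseteq V_S$ and the disconnectedness of $\G[S]$.

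Putting these together: for coordinate subspaces $V_S\subseteq V_{S'}$ iff $S\subseteq S'$, and by the two steps above each subspace in $p^{-1}(\Sigma^c)\cup\{0\}$ lies in some $V_S$ with $\G[S]$ disconnected and each such $V_S$ itself lies in $p^{-1}(\Sigma^c)\cup\{0\}$; hence these two families have exactly the same maximal elements, namely the $V_S$ for $S$ maximal among vertex sets spanning a disconnected subgraph. Since $\mathcal{V}_G$ is by definition the family of maximal subspaces in $p^{-1}(\Sigma^c)\cup\{0\}$, this is the claim. I expect the main obstacle to be the converse direction: an element of $\mathcal{V}_G$ is not a priori a coordinate subspace, and it is not automatic that its ``generic support'' $\G[U]$ is disconnected rather than merely non-dominating --- the trick of adjoining one extra non-adjacent vertex to repair this, together with the elementary fact producing $\chi_0$ with support equal to $\G[U]$, is the crux.
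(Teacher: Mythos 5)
Your proof is correct and uses the same key idea as the paper's: read supports off the Meier--VanWyk theorem and repair a non-dominating connected support by adjoining a single non-adjacent vertex. In fact your write-up is more careful than the paper's own very short proof, which only explicitly shows that each character in $\Sigma^c$ lies in some $V_S$; the paper leaves implicit both the inclusion $V_S\subseteq p^{-1}(\Sigma^c)\cup\{0\}$ and the fact that a maximal subspace of $p^{-1}(\Sigma^c)\cup\{0\}$ must itself be a coordinate subspace, and your argument producing a character with full support (a vector space over an infinite field is not a finite union of proper subspaces) is exactly what is needed to close that last point.
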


\begin{proof}
If $\G'$ is a subgraph of $\G$ which is not dominating, then there is a vertex $v'$ which is not adjacent to $\G'$, so that $\G' \cup v'$ spans a disconnected subgraph of $\G$. Hence every subgraph which is either disconnected or non-dominating is contained in a maximal disconnected subgraph of $\G$. Combining this with Theorem~\ref{t:MVW}, the support of every character that lies in the complement of the BNS invariant of $A_\G$ is contained in a maximal disconnected subgraph of $\G$, hence lies in $V_S$ for some $S$ as above.
\end{proof}

\begin{corollary}
Let $G=A_\G$ be a right-angled Artin group. Then:
\[ \dim(H_n(\mathcal{V}_G)) =\begin{cases} \mathrm{rank}(Z(A_\G)) &\text{if $n=0$} \\ 0 &\text{if $n >0$}\end{cases} \]
\end{corollary}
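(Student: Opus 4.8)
The plan is to combine the two preceding results: the structural description of $\mathcal{V}_G$ for a RAAG and the vanishing theorem for basis-compatible subspace arrangements. First I would observe that the characters $\chi_a$, $a\in V(\G)$, form a basis $S$ of the vector space $\hom(G;\R)$, and that each maximal subspace $V_S$ appearing in $\mathcal{V}_G$ is by definition spanned by the subset $\{\chi_a : a\in S\}$ of this basis. Thus $(\hom(G;\R),\mathcal{V}_G)$ is exactly of the form covered by Proposition~\ref{p:trivial}, which immediately gives $H_n(\mathcal{V}_G)=0$ for all $n\geq 1$.

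For $n=0$, I would use the general formula $H_0(V,\mathcal{V})\cong V/\vspan(\mathcal{V})$ established right after the definition of $H_*$. So $\dim H_0(\mathcal{V}_G)=\dim\hom(G;\R)-\dim\vspan(\mathcal{V}_G)$. By the proposition describing $\mathcal{V}_G$, the span $\vspan(\mathcal{V}_G)$ is the subspace spanned by all $\chi_a$ such that $a$ lies in some maximal disconnected subgraph of $\G$; equivalently, $\vspan(\mathcal{V}_G)$ is spanned by the $\chi_a$ for which $\{a\}$ itself extends to a disconnected full subgraph, i.e. for which there exists a vertex not adjacent to $a$. The complement consists of the vertices $a$ adjacent to every other vertex of $\G$. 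By Servatius's centralizer theorem as recalled in Section~\ref{ss:RAAG}, these are precisely the vertices spanning the center $Z(A_\G)$, and their number equals $\mathrm{rank}(Z(A_\G))$. Hence $\dim\hom(G;\R)-\dim\vspan(\mathcal{V}_G)=\mathrm{rank}(Z(A_\G))$, as required.

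The only point needing a small amount of care is the claim that $\vspan(\mathcal{V}_G)$ contains $\chi_a$ exactly when $a$ is not adjacent to all other vertices: one direction is the argument just given (a non-central vertex sits in a disconnected, hence sub-maximal-disconnected, subgraph), and the other direction follows because if $a$ is central then $\chi_a$ never appears in any $V_S$, since any $S$ containing $a$ spans a subgraph in which $a$ is connected to everything, contradicting disconnectedness. I do not expect any real obstacle here; the substantive work has already been done in Proposition~\ref{p:trivial} and in the structural description of $\mathcal{V}_G$, and this corollary is essentially an assembly of those facts together with the identification of the non-central vertices with a basis of $Z(A_\G)$.
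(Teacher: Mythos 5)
Your proof is correct and takes essentially the same route as the paper: apply Proposition~\ref{p:trivial} for $n\geq 1$ using the fact that each $V_S$ is spanned by a subset of the basis $\{\chi_a\}$, and then identify $H_0=\hom(G;\R)/\vspan(\mathcal{V}_G)$ with the span of the central generators by observing that $\chi_a\in\vspan(\mathcal{V}_G)$ exactly when $\st(a)\neq\G$. The extra care you take in checking both directions of that last equivalence is appropriate but amounts to the same one-line observation made in the paper.
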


\begin{proof}
Each element of $\mathcal{V}_G$ is spanned by a subset of our basis for $\hom(A_\G;\R)$. For $n \geq1$, each $H_n(\mathcal{V}_G)$ is trivial by Proposition~\ref{p:trivial}. We are then left to find \[H_0(\mathcal{V}_G)=\hom(G;\R) / \vspan(\mathcal{V}_G).\]A vertex $a \in \G$ lies in a disconnected full subgraph of $\G$ if and only if $\st(a)$ if not equal to the whole of $\G$. In other words, $\chi_a$ is contained in some element of $\mathcal{V}_G$ unless $a$ is central in $A_\G$. It follows that $\dim(H_0(\mathcal{V}_G))$ is equal to the rank of the center of $A_\G$.
\end{proof}

In particular, the Euler characteristic of $H_*(\mathcal{V}_G)$ is equal to the rank of the center of $A_\G$ and is non-negative (cf. Theorem~\ref{t:KP2}).

\subsection{Pure symmetric automorphisms}

Now suppose that $G=\psa$ and let $X$ be the standard generating set of $G$. 
For $a\in\Gamma$ and $K\in\Delta_b$, we let $\chi^a_K$ be the character defined on generators by \[ \chi^a_K(\pi^b_L) = \begin{cases} 1 &\text{if $\pi^a_K=\pi^b_L$}\\ 0& \text{otherwise}\end{cases} \]
It follows from
Toinet's presentation 
that the abelianization of $\psa$ is a free abelian group, and the standard generators map bijectively to a free generating set.
This means that each $\chi^a_K$ is a well-defined element of $\hom(\psa;\R)$
and the elements $\chi_x$ form a basis of $\hom(\psa;\R)$. 
As before, we may define the support $\supp(\chi)$ of a character $\chi$ to be the subset of the standard generating set $X$ consisting of all generators such that $\chi(\pi_K^a) \neq 0$. 
Koban and Piggott characterize elements of $\Sigma(\psa)$ according to their support in a similar fashion to Meier--VanWyk. 
They first define the following nice subsets of the standard generating set:

\begin{definition} \label{d:psets}
A subset $S \subset X$ is a \emph{p-set} if
\begin{itemize}
\item for each vertex $a$ of $\Gamma$, there is at most one partial conjugation in $S$ with multiplier $a$, and
\item $S$ has a nontrivial partition $S=S_1\cup S_2$ such that for every $\pi^a_K\in S_1$ and $\pi^b_L\in S_2$, we have $a\in L$ and $b\in K$ ($L$ and $K$ are the dominating components for the pair $(a,b)$).
\end{itemize}

A subset $S \subset X$ is a \emph{$\delta$-p-set} if
\begin{itemize}
\item 
for each vertex $a$ of $\Gamma$, there are exactly two or zero partial conjugations in $S$ with multiplier $a$, and
\item 
$S$ has a nontrivial partition $S=S_1\cup S_2$ such that for every $\pi^a_K\in S_1$ and $\pi^b_L$ in $S_2$, we have $a\in L$ or $b\in K$ or $K=L$ (so $L$ is the dominating component $[a]_b$ or $K$ is the dominating component $[b]_a$ or $K$ and $L$ are the same shared component).
\end{itemize}
\end{definition}

The p-sets here give exceptional characters similar to those occurring in RAAGs, whereas the $\delta$-p-sets only appear when $\G$ has an SIL-pair. The complement of the BNS invariant of $\psa$ can be characterized as follows:

\begin{theorem}[Koban--Piggott, \cite{KP}] \label{t:kp}
Let $\chi\colon \psa \to \R$ be nonzero character. Then $\chi$ is in the complement of the BNS invariant if and only if
\begin{itemize}
\item  $\chi$ is nontrivial on some inner automorphism and the support of $\chi$ is a subset of a p-set, or
\item  $\chi$ is trivial on every inner automorphism and the support of $\chi$ is a subset of a $\delta$-p-set.
\end{itemize}
\end{theorem}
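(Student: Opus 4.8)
The plan is to use Brown's characterization of $\Sigma$ in terms of exceptional abelian actions on $\R$--trees, together with the explicit presentation of $\psa$ from Theorem~\ref{t:psapresentation}, and to prove the two implications separately. Throughout, for a generator $\pi^a_K \in X$ an action realizing $\chi$ makes $\pi^a_K$ elliptic when $\chi(\pi^a_K)=0$ and hyperbolic (with axis oriented toward the invariant end) when $\chi(\pi^a_K)\neq 0$, so the combinatorics of $\supp(\chi)$ is exactly a bookkeeping of which generators are elliptic versus hyperbolic.

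\emph{The ``if'' direction (building actions).} Suppose $\chi$ is nontrivial on some inner automorphism and $\supp(\chi)$ lies in a p-set $S=S_1\cup S_2$; one must exhibit an exceptional action on an $\R$--tree realizing $\chi$ with invariant end at $-\infty$. I would realize $\chi$ by a length function coming from a suitable graph-of-groups splitting of $\psa$ (or directly by an action built from Bass--Serre-type data): generators on which $\chi$ vanishes are placed in a vertex group acting elliptically, and the partition $S=S_1\cup S_2$ is used to orient the hyperbolic generators so that those in $S_1$ translate toward one end and those in $S_2$ toward the other (after rescaling $\chi$ so that its restriction matches translation lengths). The content of the p-set axioms is precisely that the relations (R1)--(R4) are compatible with this assignment: the domination condition ``$a\in L$ and $b\in K$'' means a generator $\pi^a_K$ acts on the subtree where $\pi^b_L$ acts, so the commutators in (R1)--(R3) act by isometries with a common fixed point or line, while the SIL relations (R4) are compatible with the offset produced by $\chi$; checking the invariant end is at $-\infty$ is then a sign computation. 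For the $\delta$-p-set case one constructs a parallel action, but now it is the ``exactly two or zero partial conjugations per multiplier'' condition in Definition~\ref{d:psets} that makes the construction consistent with $\chi$ vanishing on every inner automorphism $\prod_{K\in I_a}\pi^a_K$ (each such product must act elliptically).

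\emph{The ``only if'' direction (obstructing actions), and the main obstacle.} Conversely, given $[\chi]\in\Sigma^c(\psa)$ we are handed an exceptional action on an $\R$--tree $T$ realizing $\chi$ with invariant end at $-\infty$, and must recover the p-set / $\delta$-p-set structure. The strategy is to run through the relations geometrically: two commuting elliptic generators have intersecting fixed sets; a hyperbolic generator commuting with an elliptic one preserves the latter's fixed set; and the SIL relation $[\pi^a_K\pi^a_L,\pi^b_L]=1$ imposes a precise constraint relating the axes and fixed sets of $\pi^a_K,\pi^a_L,\pi^b_L$. Translating these constraints back into the combinatorics of $\G$ via Lemma~\ref{l:comm} and Lemma~\ref{l:components} shows that $\supp(\chi)$ cannot contain an ``incompatible'' pair of generators, which forces it into a p-set or a $\delta$-p-set; the dichotomy is then dictated by whether $\chi$ is nontrivial on some inner automorphism, which matches the ``at most one'' versus ``exactly two or zero'' multiplier count. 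I expect this second direction to be the hard part: extracting the partition $S=S_1\cup S_2$ and the domination relation from an abstract action requires a careful case analysis of how elliptic and hyperbolic isometries can simultaneously satisfy (R1)--(R4), keeping track of the asymmetry introduced by the invariant end (which is exactly what separates $\Sigma^c$ from its antipode). An alternative, more combinatorial route would be to apply the Bieri--Neumann--Strebel criterion directly to the Cayley graph of $(\psa,X)$ --- showing that when $\supp(\chi)$ is not contained in any p-set or $\delta$-p-set the relations connect the $\{\chi\geq 0\}$ subgraph, and that when it is, a ``second coordinate'' built from the partition disconnects it --- but either way the bookkeeping around SILs and inner automorphisms is the crux.
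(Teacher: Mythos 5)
This paper does not prove Theorem~\ref{t:kp}: it is an external citation to Koban--Piggott~\cite{KP}, and the text simply records their result. So there is no in-paper proof against which to compare. That said, your proposal is a high-level plan rather than a proof, and there are concrete points worth flagging.

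For the direction ``support contained in a p-set or $\delta$-p-set $\Rightarrow$ $[\chi]\in\Sigma^c$,'' you propose constructing an $\R$-tree action from Bass--Serre data. This is plausible in principle, but it is harder than it needs to be: the efficient route, and the one this paper actually uses in Proposition~\ref{p:bnspso} and attributes to~\cite{KP}, is to exhibit a surjection $\psa\twoheadrightarrow A_1*A_2$ to a nontrivial free product through which $\chi$ factors, and then invoke Lemma~\ref{l:freeprodchar}. The p-set/$\delta$-p-set partition $S=S_1\cup S_2$ is exactly the data needed to build such a surjection (send each standard generator with multiplier in $S_i$ to the corresponding generator of $A_i$, and check compatibility with (R1)--(R4) using Lemma~\ref{l:comm}), and the free-product lemma automatically handles both $\chi$ and $-\chi$, consistent with the symmetry visible in Proposition~\ref{p:psabns}. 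Your plan to ``orient hyperbolic generators'' and then verify the invariant end is at $-\infty$ by a sign computation re-derives this by hand and leaves the actual verification of (R1)--(R4) against the tree geometry unaddressed.

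For the converse direction -- characters whose support is not contained in any p-set or $\delta$-p-set (in the appropriate inner/non-inner case) lie in $\Sigma$ -- your sketch is essentially a statement of intent: ``run through the relations geometrically'' and ``a careful case analysis'' does not identify the mechanism that forces connectivity of $\chi^{-1}([0,\infty))$ in the Cayley graph, which is the genuinely hard content of the Koban--Piggott theorem. You correctly identify this as the crux, but the proposal does not supply the key combinatorial step: showing that whenever the support escapes every p-set/$\delta$-p-set, there is enough commuting structure among the standard generators (coming from Lemma~\ref{l:comm}, i.e.\ the non-SIL or subordinate cases) to build, for each element of the monoid $\chi^{-1}([0,\infty))$, a path back to a fixed basepoint that stays in the nonnegative half. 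Without that argument the proof is incomplete, and it is precisely the part an $\R$-tree reformulation does not make any easier.
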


In the second case of the above theorem, as $\chi$ is trivial on every inner automorphism, it follows that  $\chi(\pi^a_K)=-\chi(\pi^a_L)$ for each pair of elements $\pi^a_K, \pi^a_L$ with the same multiplier in its associated $\delta$-p-set. This gives enough information to describe $\mathcal{V}_G$.

\begin{proposition}\label{p:psabns} Let $G=\psa$ and let $\mathcal{V}_G$ be the set of maximal subspaces in the complement of the BNS invariant. For each maximal p-set $S\subset X$ there is a subspace $V_S \in \mathcal{V}_G$ given by \[ V_S= \langle \{\chi^a_K : \pi^a_K\in S \}\rangle \]
and for each maximal $\delta$-p-set $S \subset X$ there is a subspace $V_S \in \mathcal{V}_G$ of the form \[V_S = \langle\{ \chi^a_K- \chi^a_L :\pi^a_K, \pi^a_L\in S \}\rangle \] 
Furthermore, each element of $\mathcal{V}_G$ is one of these two types.
\end{proposition}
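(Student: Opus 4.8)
The plan is to read $\mathcal{V}_G$ directly off Koban and Piggott's description of $\Sigma^c(\psa)$ (Theorem~\ref{t:kp}) using the basis $\{\chi_x\}_{x\in X}$ of $\hom(\psa;\R)$. First I would record two preliminary observations. For a subset $S\subseteq X$, the span $\langle \chi^a_K : \pi^a_K\in S\rangle$ is exactly the set of characters $\chi$ with $\supp(\chi)\subseteq S$. And a character $\chi$ is trivial on every inner automorphism if and only if $\chi(\mathrm{inn}_b)=0$ for every vertex $b$, where $\mathrm{inn}_b=\prod_M \pi^b_M$ is the product over the components $M$ of $\G-\st(b)$ (this is relation (R5) of Corollary~\ref{c:pres}); in particular $\chi(\mathrm{inn}_b)=\sum_M\chi(\pi^b_M)$. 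With these in place the argument has three steps.

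First I would show that each subspace $V_S$ in the statement lies in the complement of the BNS invariant. If $S$ is a p-set, then any nonzero $\chi\in V_S$ has $\supp(\chi)\subseteq S$, and because $S$ contains at most one partial conjugation per multiplier the value $\chi(\mathrm{inn}_b)$ equals the (unique) coefficient of $\chi$ at multiplier $b$; since $\chi\neq 0$, some such coefficient is nonzero, so $\chi$ is nontrivial on some inner automorphism and the first clause of Theorem~\ref{t:kp} gives $[\chi]\in\Sigma^c$. If $S$ is a $\delta$-p-set, then the difference characters $\chi^a_K-\chi^a_L$, one per multiplier $a$ occurring in $S$, have pairwise disjoint support and span $V_S$; each of them (hence every $\chi\in V_S$) is trivial on every inner automorphism, since the multiplier-$b$ contribution to $\chi(\mathrm{inn}_b)$ cancels. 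As $\supp(\chi)\subseteq S$, the second clause of Theorem~\ref{t:kp} gives $[\chi]\in\Sigma^c$.

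Next I would show that every subspace $V\subseteq\hom(\psa;\R)$ contained in $\Sigma^c$ lies inside a single $V_S$. Theorem~\ref{t:kp} sorts each $\chi\in V$ into one of two cases: if $\chi$ is nontrivial on some inner automorphism, then $\supp(\chi)$ sits in some p-set $S$, so $\chi\in V_S$; if $\chi$ is trivial on every inner automorphism, then $\supp(\chi)$ sits in some $\delta$-p-set $S$, and using $\chi(\mathrm{inn}_a)=0$ to pair up coefficients at each multiplier shows $\chi$ lies in the span of the corresponding difference characters, i.e. $\chi\in V_S$. Since $X$ is finite there are only finitely many p-sets and $\delta$-p-sets, so $V$ is the union of the subspaces $V\cap V_S$; because $\R$ is infinite, a vector space is never a finite union of proper subspaces, so $V=V\cap V_S$ for a single $S$, i.e. $V\subseteq V_S$.

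Combining these, the maximal subspaces in $\Sigma^c$ are exactly the maximal members of the finite family $\{V_S\}$, and each such maximal subspace equals $V_S$ for some $S$. To conclude I would check the inclusions among the $V_S$: for two p-sets, $V_S\subseteq V_{S'}$ iff $S\subseteq S'$ (compare the basis characters $\chi^a_K$), and likewise for two $\delta$-p-sets (compare the disjointly supported difference characters), while a nonzero $V_S$ of one type is never contained in a $V_{S'}$ of the other (every character in a $\delta$-p-set space kills all inner automorphisms, but a nonzero p-set space contains a character that does not, by Step~1). Hence the maximal $V_S$ are precisely those coming from maximal p-sets and from maximal $\delta$-p-sets; the only degenerate case, $S=\varnothing$, arises only when $\G$ has no pair of nonadjacent vertices, where $\psa$ is trivial. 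I expect the main obstacle to be Step~2: making the case split of Theorem~\ref{t:kp} interact cleanly with the support bookkeeping — in particular verifying, in the $\delta$-p-set case, that vanishing on inner automorphisms forces $\chi$ into the span of the difference characters rather than the full coordinate subspace — together with remembering to invoke the elementary but indispensable fact that a subspace contained in a finite union of subspaces of a space over an infinite field is contained in one of them.
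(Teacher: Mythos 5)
The paper does not actually record a proof of this proposition; the authors treat it as a direct consequence of Theorem~\ref{t:kp}, with only the remark that in the $\delta$-p-set case triviality on inner automorphisms forces $\chi(\pi^a_K)=-\chi(\pi^a_L)$ for same-multiplier pairs. Your proof is a careful filling-in of exactly that reading. Steps 1 and 2 are correct, and the observation that a subspace of $\hom(\psa;\R)$ lying in $\Sigma^c$ must lie in a \emph{single} $V_S$ --- via the fact that a vector space over $\R$ is never a finite union of proper subspaces --- is the right way to package the passage from a pointwise sorting of characters by Theorem~\ref{t:kp} to a containment of subspaces.

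There is one genuine (if small) gap, in Step 3. Your stated reason that cross-type containments cannot occur --- ``every character in a $\delta$-p-set space kills all inner automorphisms, but a nonzero p-set space contains a character that does not'' --- only proves one direction: a p-set subspace $V_{S'}$ cannot be contained in a $\delta$-p-set subspace $V_S$, because the bad character of $V_{S'}$ would have to live in $V_S$. It does \emph{not} rule out $V_S\subseteq V_{S'}$ with $S$ a $\delta$-p-set and $S'$ a p-set, since passing to a smaller subspace does not import the bad character. You need exactly that direction when arguing that a maximal $\delta$-p-set gives a maximal subspace of $\Sigma^c$. The fix is a support argument: a nonzero $\delta$-p-set space $V_S$ contains a difference character $\chi^a_K-\chi^a_L$ whose support is $\{\pi^a_K,\pi^a_L\}$, two standard generators with the same multiplier $a$; if $V_S\subseteq V_{S'}$ then that support must lie in $S'$, which is impossible because a p-set contains at most one partial conjugation per multiplier. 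With that sentence added, your proof is complete and faithfully reconstructs the argument the authors leave implicit.
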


Koban and Piggott used this description to take an alternating sum of dimensions of intersections of spheres in $\Sigma^c(G)$. 
Intersections of spheres in $\Sigma^c(G)$ correspond to intersection of subspaces in $\mathcal{V}_G$.
Using our terminology, we rephrase their result as follows:

\begin{theorem}[Koban--Pigott~\cite{KP}]\label{t:KP2}
Let $G=\psa$. If $\G$ contains no separating intersection of links then the Euler characteristic of $H_*(\mathcal{V}_G)$ is zero. Otherwise, the Euler characteristic of $H_*(\mathcal{V}_G)$ is strictly negative.
\end{theorem}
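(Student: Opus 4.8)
The plan is to move to the chain-complex level and split according to whether $\G$ has a SIL. Since $\mathcal{V}_G$ is finite (Proposition~\ref{p:psabns}), we have $C_k(\hom(\psa;\R),\mathcal{V}_G)=\bigoplus_{|J|=k}V_J$ with $V_J=\bigcap_{W\in J}W$, so the Euler characteristic of $H_*(\mathcal{V}_G)$ is
\[\sum_{k\geq0}(-1)^k\dim C_k=\dim\hom(\psa;\R)+\sum_{k\geq1}(-1)^k\!\!\sum_{|J|=k}\!\!\dim V_J.\]
After the routine reindexing relating $\dim V_J$ to the dimension of the corresponding subsphere of $\Sigma^c(\psa)$, this is (up to normalisation conventions) the inclusion--exclusion sum that Koban and Piggott evaluate, so the content of the theorem is the two asserted values of this quantity.

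For the no-SIL case I would argue entirely inside the framework of Section~\ref{s:homology}. When $\G$ has no SIL there are no $\delta$-p-sets, since a $\delta$-p-set forces a shared component; hence by Proposition~\ref{p:psabns} every element of $\mathcal{V}_G$ is of the form $V_S=\langle\chi^a_K:\pi^a_K\in S\rangle$, that is, is spanned by a subset of the standard basis $\{\chi_x\}_{x\in X}$ of $\hom(\psa;\R)$. Proposition~\ref{p:trivial} then gives $H_n(\mathcal{V}_G)=0$ for all $n\geq1$. It remains to see that $H_0(\mathcal{V}_G)=\hom(\psa;\R)/\vspan(\mathcal{V}_G)$ vanishes: for any standard generator $\pi^a_K$, choose a vertex $b\in K$ (such a vertex exists and is nonadjacent to $a$); then $\{\pi^a_K,\pi^b_{[a]_b}\}$ is a p-set, so $\chi^a_K$ lies in a maximal p-set subspace, whence the p-set subspaces already span $\hom(\psa;\R)$. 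Therefore the Euler characteristic of $H_*(\mathcal{V}_G)$ is $0$. Note that this spanning argument is independent of any SILs, so $H_0(\mathcal{V}_G)=0$ always.

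For the SIL case it remains, by the previous paragraph, to show that the homology above degree $0$ is large enough to make the Euler characteristic strictly negative. The approach is to isolate a SIL-pair $(a,b)$ with shared component $L$: the set $\{\pi^a_{[b]_a},\pi^a_L,\pi^b_{[a]_b},\pi^b_L\}$ is a $\delta$-p-set, contributing to $\mathcal{V}_G$ a two-dimensional subspace $W=\langle\chi^a_{[b]_a}-\chi^a_L,\ \chi^b_{[a]_b}-\chi^b_L\rangle$ which is not a coordinate subspace, while $\chi^a_{[b]_a}$ and $\chi^a_L$ lie in distinct maximal p-set subspaces $V_1,V_2$ (they cannot lie in a common p-set, since p-sets contain at most one partial conjugation per multiplier). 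Exactly as in Example~\ref{example:h1nontrivial}, one writes down the candidate $1$-cycle supported on the $V_1$-, $V_2$- and $W$-coordinates of $C_1$; in the simplest configurations one checks directly that $W\cap V_j=0$ for every other $V_j\in\mathcal{V}_G$ and concludes that this class is nonzero in $H_1(\mathcal{V}_G)$, pushing the Euler characteristic below $0$. The hard part is the global bookkeeping: there can be many SIL-pairs, the $\delta$-p-set subspaces can be large and can meet one another in subspaces that are not spanned by basis vectors, so Proposition~\ref{p:trivial} no longer applies and one must rule out cancellation in the alternating sum. Following Koban and Piggott, one reorganises that sum so the $\delta$-contributions are collected and shown to be negative; alternatively one proves that $H_n(\mathcal{V}_G)=0$ for $n\geq2$ directly, and it is this higher vanishing (together with the non-triviality of the $H_1$-class in the presence of overlapping $\delta$-p-set subspaces) that I expect to be the most delicate point.
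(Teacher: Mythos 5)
The paper does not prove Theorem~\ref{t:KP2}: the bracketed attribution marks it as Koban--Piggott's result, and the paper's own contribution is only the translation observing that the alternating sum Koban--Piggott compute over dimensions of intersections of subspheres in $\Sigma^c(\psa)$ is precisely the Euler characteristic of the chain complex $C_*(\mathcal{V}_G)$, hence of $H_*(\mathcal{V}_G)$. Your opening paragraph captures this exactly, so at that level you are on the same track as the paper. Your supplementary no-SIL argument is in fact a genuine improvement over a bare citation: you correctly observe that a nontrivial $\delta$-p-set forces a shared component and hence a SIL-pair (so with no SILs, every $V_S\in\mathcal{V}_G$ is a p-set subspace spanned by basis characters), and Proposition~\ref{p:trivial} then gives $H_n(\mathcal{V}_G)=0$ for $n\geq1$; your observation that for any standard generator $\pi^a_K$ and any $b\in K$ the pair $\{\pi^a_K,\pi^b_{[a]_b}\}$ is a p-set shows $\vspan(\mathcal{V}_G)=\hom(\psa;\R)$, hence $H_0(\mathcal{V}_G)=0$. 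That is a clean, self-contained proof of the no-SIL case.

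The SIL case, however, is a genuine gap in your proposal, as you yourself acknowledge. Exhibiting one nontrivial class in $H_1$ does not bound the Euler characteristic without controlling $H_n$ for $n\geq2$, and your suggested fallback---``one proves that $H_n(\mathcal{V}_G)=0$ for $n\geq2$ directly''---is unsupported: Example~\ref{example:h2nontrivial} shows higher arrangement homology need not vanish, and nothing in the paper (nor Proposition~\ref{p:trivial}, which fails once the $\delta$-p-set subspaces are no longer coordinate subspaces) establishes such vanishing for $\psa$. The strictly-negative claim really does require Koban--Piggott's explicit inclusion-exclusion bookkeeping over the combinatorics of p-sets and $\delta$-p-sets, which controls the alternating sum combinatorially rather than by a homological vanishing. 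Since the paper itself defers entirely to \cite{KP} here, your approach coincides with the paper's precisely to the extent that both cite; as a self-contained argument, the SIL half remains unproven.
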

 
 \subsection{Pure symmetric outer automorphisms}
 We now turn our attention to $\pso$. Let \[ f\colon\psa \to \pso \] be the quotient map, and let \[ f^* \colon \hom(\pso;\R) \to \hom(\psa;\R) \] be the dual map on characters given by $f^*(\chi)=\chi \circ f$. As $f$ is surjective, the map $f^*$ is injective, with image given by the characters $\chi \in \hom(\psa;\R)$ that are trivial on the inner automorphisms. In other words, if $\Delta_a$ is the support graph for some vertex $a \in \G$, we have \[\sum_{K \in \Delta_a} \chi(\pi_K^a)=0.\]
We identify $\hom(\pso;\R)$ with this subspace of $\hom(\psa;\R)$. 
This allows us to talk about the support of a character on $\pso$; it is the support of of the character on $\psa$ we get by composing with the projection $f$.

To proceed, we need the following well-known fact, which is stated in~\cite{KP}.  We do not give a proof here.
\begin{lemma}[\cite{KP}]\label{l:freeprodchar}
If $\chi\colon G\to \R$ is a nontrivial character on a group $G$ that factors through a surjective map $G\to A*B$, where $A*B$ is a nontrivial free product, then $[\chi]$ is in the complement of the BNS invariant of $G$.
\end{lemma}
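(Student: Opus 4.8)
The plan is to reduce the statement to the classical fact that a nontrivial free product has empty BNS invariant, and then prove that fact using the Cayley-graph criterion recalled in Section~\ref{ss:bns}. First I would reduce to the case $G=A*B$: if $\phi\colon G\to A*B$ is the given surjection and $\chi=\bar\chi\circ\phi$, then $\bar\chi\colon A*B\to\R$ is itself nontrivial (otherwise $\chi$ would be), and Proposition~\ref{p:char} applied to $\phi$ shows that if $[\bar\chi]$ lies in the complement of the BNS invariant of $A*B$ then $[\chi]=[\phi^*\bar\chi]$ lies in the complement of the BNS invariant of $G$. So it suffices to prove that for every nontrivial free product $A*B$ and every nontrivial character $\bar\chi\colon A*B\to\R$ one has $[\bar\chi]\in\Sigma^c(A*B)$, i.e.\ $\Sigma(A*B)=\varnothing$.

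For this I would use the criterion of \cite[Prop.~2.3]{BNS} in the case where $A$ and $B$ --- hence $A*B$ --- are finitely generated, which is the only case relevant to this paper. Fix finite generating sets $\mathcal S_A,\mathcal S_B$ and work in $\mathrm{Cay}(A*B,\mathcal S_A\cup\mathcal S_B)$, which is a tree of spaces over the Bass--Serre tree of the free-product splitting: it is built from copies of $\mathrm{Cay}(A,\mathcal S_A)$ on the left cosets of $A$ and copies of $\mathrm{Cay}(B,\mathcal S_B)$ on the left cosets of $B$, glued along single vertices, each of which is a cut point separating the two half-trees on either side of the corresponding edge. Since $A$ and $B$ generate $A*B$, at least one of $\bar\chi|_A,\bar\chi|_B$ is nonzero; say $\bar\chi|_A\neq0$. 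Choose $a\in A$ with $\bar\chi(a)<0$ and $b\in B\setminus\{1\}$ with $\bar\chi(b)\geq0$ (possible whether or not $\bar\chi|_B$ vanishes). Then $1$ and $aba^{-1}$ both lie in the full subgraph $\Gamma_{\bar\chi}$ spanned by $\bar\chi^{-1}([0,\infty))$, since $\bar\chi(aba^{-1})=\bar\chi(b)\geq0$. But $1$ lies in the copy of $\mathrm{Cay}(A)$ on the coset $A$, while $aba^{-1}$ lies in the copies on the cosets $abA$ and $aba^{-1}B$, and tracking the cosets $A,aB,abA,aba^{-1}B$ one checks that these copies lie on opposite sides of the cut vertex $a$ (the vertex shared by the copies on $A$ and on $aB$). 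Hence every path from $1$ to $aba^{-1}$ passes through $a$; as $\bar\chi(a)<0$ this vertex is not in $\Gamma_{\bar\chi}$, so $\Gamma_{\bar\chi}$ is disconnected and $[\bar\chi]\notin\Sigma(A*B)$.

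The only loose end is the case where $G$, and hence $A*B$, is infinitely generated; this never occurs in this paper, where $G$ is always a right-angled Artin group, $\psa$, or $\pso$, all finitely generated. For completeness one would instead argue with Brown's $\R$-tree definition: the kernel $\ker\bar\chi$ is, by the Kurosh subgroup theorem, a free product of a free group with conjugates of subgroups of $A$ and $B$, so one can build an $\R$-tree on which $\ker\bar\chi$ acts with every element elliptic but with no global fixed point, the approximate fixed sets converging to a single end $e$; choosing signs so that $\bar\chi\to-\infty$ along sequences tending to $e$ makes $e$ the invariant end at $-\infty$, and the action is abelian, exceptional, and realizes $\bar\chi$. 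I expect this construction to be the main obstacle --- one must verify that the translation lengths equal $|\bar\chi|$ and that exactly one end is fixed, which is delicate when $\bar\chi$ has dense image in $\R$ --- but since it is unnecessary for the applications in this paper, I would simply cite the finitely generated case.
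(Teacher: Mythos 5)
The paper does not give its own proof of this lemma; it explicitly states ``We do not give a proof here'' and attributes the statement to~\cite{KP}, so there is no in-paper argument to compare against. Your proof of the finitely generated case is correct and self-contained: the reduction to $G=A*B$ via Proposition~\ref{p:char} is exactly right, and the cut-vertex argument in $\mathrm{Cay}(A*B,\mathcal{S}_A\cup\mathcal{S}_B)$ works as you describe --- choosing $a\in A$ with $\bar\chi(a)<0$ and $b\in B\setminus\{1\}$ with $\bar\chi(b)\geq 0$ is always possible once $\bar\chi|_A\neq 0$ (WLOG), and then $1$ and $aba^{-1}$ lie in $\bar\chi^{-1}([0,\infty))$ while every path between them in the Cayley graph must cross the vertex $a$, which is excluded. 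You are also right that the finitely generated case is the only one used in the paper (since $A_\G$, $\psa$, and $\pso$ are all finitely generated), so your decision to cite rather than fully develop the $\R$-tree argument for the infinitely generated case is a reasonable one --- though note the lemma as stated has no finite-generation hypothesis, so a fully general proof would still need that sketch filled in, or a direct citation to~\cite{KP} or~\cite{Brown}.
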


 \begin{proposition} \label{p:bnspso}
Let $A_\Gamma$ be a RAAG and let $\chi\colon \pso\to \R$ be nonzero.
The class $[\chi]$ is not in the BNS invariant if and only if the support of $\chi$ is a subset of a $\delta$-p-set.
\end{proposition}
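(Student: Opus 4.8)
The plan is to deduce this from the Koban--Piggott characterization of $\Sigma(\psa)$ (Theorem~\ref{t:kp}) via the identification of $\hom(\pso;\R)$ with the subspace of $\hom(\psa;\R)$ consisting of characters vanishing on inner automorphisms. Write $\bar\chi = f^*(\chi)$ for the corresponding character of $\psa$; by Proposition~\ref{p:char}, if $[\bar\chi]\in\Sigma^c(\psa)$ then $[\chi]\in\Sigma^c(\pso)$. Since $\bar\chi$ is by construction trivial on every inner automorphism, the first bullet of Theorem~\ref{t:kp} can never apply to $\bar\chi$, so the only way to certify $[\bar\chi]\in\Sigma^c(\psa)$ using that theorem is the second bullet: $\supp(\bar\chi)$ (equivalently $\supp(\chi)$) is contained in a $\delta$-p-set. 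This immediately gives the ``if'' direction: when $\supp(\chi)$ lies in a $\delta$-p-set, Theorem~\ref{t:kp} puts $[\bar\chi]$ in $\Sigma^c(\psa)$, and pulling back gives $[\chi]\in\Sigma^c(\pso)$.

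\textbf{The harder direction.} For ``only if'', I must show that if $\supp(\chi)$ is \emph{not} contained in any $\delta$-p-set, then $[\chi]\in\Sigma(\pso)$. The subtlety is that $\Sigma^c$ is not simply well-behaved under pullback along quotients --- a character on $\pso$ could a priori land in $\Sigma^c(\pso)$ for reasons invisible on $\psa$ (indeed, passing from $G$ to a quotient can only enlarge $\Sigma^c$, by Proposition~\ref{p:char}, so we need that nothing new appears in $\Sigma^c(\pso)$ beyond what the $\delta$-p-sets already account for). The cleanest route is to argue directly: suppose $\supp(\chi)$ is not in a $\delta$-p-set. Then $\bar\chi$, being trivial on inner automorphisms with support not in a $\delta$-p-set, satisfies neither bullet of Theorem~\ref{t:kp}, so $[\bar\chi]\in\Sigma(\psa)$. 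One then needs to promote membership in $\Sigma(\psa)$ to membership in $\Sigma(\pso)$. Here I expect to use that $\pso$ is a quotient of $\psa$ by a \emph{central-by-something} subgroup --- more precisely, since $\bar\chi$ kills the inner automorphisms, any exceptional action of $\pso$ realizing $\chi$ with invariant end at $-\infty$ would, composed with $f$, be an exceptional action of $\psa$ realizing $\bar\chi$ with invariant end at $-\infty$, contradicting $[\bar\chi]\in\Sigma(\psa)$. This is exactly the content of Proposition~\ref{p:char} applied in the reverse reading: if $[\bar\chi]=f^*[\chi]\in\Sigma(\psa)$, does it follow that $[\chi]\in\Sigma(\pso)$? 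That is precisely the contrapositive of Proposition~\ref{p:char}, so it does.

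\textbf{Assembling the argument.} So the proof is short modulo one verification. The ``if'' direction uses Theorem~\ref{t:kp} (second bullet) plus Proposition~\ref{p:char}. The ``only if'' direction observes that $\bar\chi$ is always trivial on inner automorphisms, so $[\bar\chi]\in\Sigma^c(\psa)$ \emph{iff} $\supp(\chi)$ is in a $\delta$-p-set (by Theorem~\ref{t:kp}); then Proposition~\ref{p:char} (contrapositive) transfers $[\bar\chi]\in\Sigma(\psa)$ to $[\chi]\in\Sigma(\pso)$. The one place to be careful is whether Lemma~\ref{l:freeprodchar} is needed: I suspect it enters to handle degenerate cases where $\pso$ (or a relevant factor) splits as a free product and the direct transfer argument needs a supplementary certificate that $[\chi]\in\Sigma^c$, but in the generic situation the pullback argument via Theorem~\ref{t:kp} and Proposition~\ref{p:char} suffices. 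The main obstacle I anticipate is making sure the identification of $\hom(\pso;\R)$ with $\{\bar\chi : \bar\chi|_{\mathrm{Inn}}=0\}$ is compatible with the support bookkeeping --- i.e. that ``$\supp(\chi)$ a subset of a $\delta$-p-set'' really is the right translation of the second bullet of Theorem~\ref{t:kp} once we restrict to the inner-trivial subspace --- and checking that no $p$-set condition can sneak back in, which follows because a nonzero inner-trivial character cannot have support in a $p$-set (a $p$-set contains at most one partial conjugation per multiplier, so a character supported there is never trivial on the corresponding inner automorphism unless it is zero on that multiplier, and a short case analysis rules out the whole support lying in a $p$-set while the character is inner-trivial and nonzero).
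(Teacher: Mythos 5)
There is a genuine gap in your proof of the ``if'' direction, and it comes from applying Proposition~\ref{p:char} backwards. Proposition~\ref{p:char} says (with $f\colon\psa\to\pso$ the quotient): if $\chi\in\Sigma^c(\pso)$ then $f^*(\chi)\in\Sigma^c(\psa)$. Its contrapositive says: if $f^*(\chi)\in\Sigma(\psa)$ then $\chi\in\Sigma(\pso)$. Both of these transfer information from $\pso$ to $\psa$, i.e.\ they are only useful for the ``only if'' direction (which, incidentally, you do correctly, by the contrapositive route rather than directly). Neither form gives what you assert in the strategy paragraph, namely ``if $[\bar\chi]\in\Sigma^c(\psa)$ then $[\chi]\in\Sigma^c(\pso)$.'' That is the \emph{converse} of Proposition~\ref{p:char}, and it is false in general: an exceptional $\R$-tree action of $\psa$ realizing $\bar\chi$ need not descend to $\pso$, because the inner automorphisms, while acting with translation length zero, need not act trivially. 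So ``pulling back'' does not give the ``if'' direction, and it is not a matter of degenerate cases --- the transfer argument simply never goes that way.

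The missing ingredient is exactly Lemma~\ref{l:freeprodchar}, which is not a fallback but the entire content of the ``if'' direction. Given a character $\chi$ whose support lies in a $\delta$-p-set $S$ with its defining partition $S=S_1\cup S_2$, one must construct a surjection $\pso\to A_1*A_2$ onto a nontrivial free product (with $A_i$ the free abelian group on the multipliers occurring in $S_i$) through which $\chi$ factors; one then checks, using the presentation in Corollary~\ref{c:pres} and the commutation analysis in Lemma~\ref{l:comm}, that the assignment $[\pi^a_{K_1}]\mapsto a$, $[\pi^a_{K_2}]\mapsto a^{-1}$, other generators with multiplier $a$ to the identity, respects all relations of $\pso$. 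Lemma~\ref{l:freeprodchar} then places $[\chi]$ in $\Sigma^c(\pso)$. This construction is the substantive step that your argument omits.
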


\begin{proof}
By Proposition~\ref{p:char}, if $\chi$ is in the complement of the BNS invariant of $\pso$, then $f^*(\chi)$ is in the complement of the BNS invariant of $\psa$. 
Since $f^*(\chi)$ is in the image of $f^*$, it is trivial on every inner automorphism.
Then by Theorem~\ref{t:kp} it has a support which is a subset of a $\delta$-p-set. 

Conversely, given any character $\chi$ whose support is a $\delta$-p-set, we need to show that $[\chi]$ is not in $\Sigma$. 
Following~\cite{KP}, we find a surjection $\phi\colon \pso \to A_1*A_2$ to a nontrivial free product which $\chi$ factors through.
By Lemma~\ref{l:freeprodchar}, it will follow that $[\chi]\in\Sigma^c$. 
Let $S$ be the $\delta$-p-set which is the support of $\chi$, and let $S_1 \cup S_2$ be a partition of $S$ given in Definition~\ref{d:psets}. 
Each multiplier $a$ that appears in $S$ has two elements $\pi^a_{K_1}$ and $\pi^a_{K_2}$, both of which lie on one side of the partition $S_1 \cup S_2$. Furthermore, $\chi(\pi^a_{K_1})=-\chi(\pi^a_{K_2})$. Let $A_1$ be the free abelian group on the multipliers that appear in $S_1$ and $A_2$ the free abelian group on the set of multipliers that appear in $S_2$. We map $\pso$ to $A_1*A_2$ by sending $[\pi^a_{K_1}]$ to $a$, sending $[\pi^a_{K_2}]$ to $-a$, and every other generator with multiplier $a$ to the trivial element. If $b$ is a multiplier that occurs on the other side of the partition with corresponding elements $\pi^b_{L_1}$ and $\pi^b_{L_2}$, then each commutator $[\pi^a_{K_i},\pi^b_{L_j}]$ is nontrivial in $\pso$. Furthermore, one can check that the map to $A_1 * A_2$ respects all relations in the presentation of $\pso$ and is therefore well-defined. Hence $[\chi] \in \Sigma^c$.
\end{proof}

\begin{corollary} \label{co:psobnsarrangement}
Let $G=\pso$ and let $\mathcal{V}_G$ be the set of maximal subspaces in the complement of the BNS invariant. 
The family $\mathcal{V}_G$ consists exactly of the subspaces of the form 
 \[V_S = \langle \{\chi^a_K- \chi^a_L :\pi^a_K, \pi^a_L\in S \}\rangle \] 
for each maximal $\delta$-p-set $S$.
\end{corollary}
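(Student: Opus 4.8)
The statement follows by combining Proposition~\ref{p:bnspso} with Proposition~\ref{p:max} (which lets us work with maximal subspaces). The plan is to show two things: first, that every subspace arising from Proposition~\ref{p:bnspso} is contained in a subspace of the asserted form $V_S$ for a maximal $\delta$-p-set $S$, and second, that each such $V_S$ actually lies in $p^{-1}(\Sigma^c)\cup\{0\}$, so that these really are maximal among subspaces in the complement.

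First I would set up the bookkeeping. By Proposition~\ref{p:bnspso}, a nonzero character $\chi\colon\pso\to\R$ lies in $\Sigma^c$ exactly when $\supp(\chi)$ is a subset of some $\delta$-p-set, and every $\delta$-p-set is contained in a maximal one (here one must note that the defining conditions for a $\delta$-p-set are inherited by supersets only in a controlled way; in fact one works with $\delta$-p-sets directly and takes maximal ones among them, which exist since $X$ is finite). Using the observation recorded just after Theorem~\ref{t:kp}, a character $\chi\in\hom(\pso;\R)$ with $\supp(\chi)\subseteq S$ satisfies $\chi(\pi^a_K)=-\chi(\pi^a_L)$ for the two generators $\pi^a_K,\pi^a_L\in S$ sharing multiplier $a$, because $\chi$ (viewed in $\hom(\psa;\R)$ via $f^*$) is trivial on inner automorphisms. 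Hence the set of all such $\chi$ is precisely the span of the vectors $\chi^a_K-\chi^a_L$ as $a$ ranges over the multipliers of $S$; that is exactly $V_S$. This shows $p^{-1}(\Sigma^c)\cup\{0\}=\bigcup_S V_S$, the union over maximal $\delta$-p-sets.

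Next I would check maximality and the fact that each $V_S\subseteq p^{-1}(\Sigma^c)\cup\{0\}$. For the latter: any nonzero $\chi\in V_S$ has $\supp(\chi)\subseteq S$, a $\delta$-p-set, so $[\chi]\in\Sigma^c$ by Proposition~\ref{p:bnspso}. For maximality: suppose $V_S\subsetneq W$ for some subspace $W$ contained in the complement. Pick $\chi\in W\setminus V_S$; then $\chi+\chi'$ for generic $\chi'\in V_S$ has support strictly larger than any subset of $S$ of the required type, or its support is not contained in a $\delta$-p-set at all, contradicting that $[\chi+\chi']\in\Sigma^c$. More carefully, because $S$ is a \emph{maximal} $\delta$-p-set, no $\delta$-p-set properly contains it; combined with the partition/multiplier constraints, this forces any $\chi$ whose support meets $X\setminus S$, or which fails the relation $\chi(\pi^a_K)=-\chi(\pi^a_L)$ on $S$, to have $[\chi]\in\Sigma$. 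So $W=V_S$. Finally, Proposition~\ref{p:max} identifies the homology-relevant arrangement with the maximal ones, giving the claimed description of $\mathcal{V}_G$.

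The main obstacle I anticipate is the maximality argument — specifically, verifying that distinct maximal $\delta$-p-sets do not give subspaces with one contained in another, and that no subspace in the complement straddles two different $V_S$'s or extends one of them. This is really a combinatorial statement about how $\delta$-p-sets sit inside $X$: one needs that the span condition "$\supp(\chi)$ lies in a $\delta$-p-set" behaves well under taking linear combinations, which it does not automatically (the union of two $\delta$-p-sets need not be one). The resolution is that a subspace $W\subseteq p^{-1}(\Sigma^c)\cup\{0\}$ has the property that \emph{every} nonzero element has support in \emph{some} $\delta$-p-set, and a short argument — looking at a basis of $W$ in "general position" and at sums of basis vectors — shows all these supports must lie in one common $\delta$-p-set, forcing $W\subseteq V_S$. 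Pinning down this general-position argument cleanly is the one place where care is required; everything else is formal given the earlier results.
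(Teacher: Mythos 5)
Your argument is essentially correct and takes the natural route: Proposition~\ref{p:bnspso} identifies the support condition, and the fact that characters on $\pso$ are trivial on inner automorphisms forces $\chi(\pi^a_K)=-\chi(\pi^a_L)$ whenever $\supp(\chi)$ lies in a $\delta$-p-set $S$, so that the characters with support in $S$ form exactly $V_S$. Two remarks. First, the place you flag as needing care — showing that a subspace $W$ contained in $\bigcup_S V_S$ (union over maximal $\delta$-p-sets) must actually lie in a single $V_S$ — is cleanly handled without any bespoke ``general position'' analysis: the standard generating set $X$ is finite, so there are only finitely many maximal $\delta$-p-sets, hence finitely many $V_S$; and over the infinite field $\R$ a vector space cannot be covered by finitely many proper subspaces. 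Applied to $W=\bigcup_S (W\cap V_S)$, this gives $W\subseteq V_S$ for some $S$. Maximality of each $V_S$ then follows, since containments $V_{S_1}\subseteq V_{S_2}$ force $S_1\subseteq S_2$ (each generator $\chi^a_K-\chi^a_L$ has support exactly $\{\pi^a_K,\pi^a_L\}$) and hence $S_1=S_2$ by maximality of the $\delta$-p-sets. Second, your invocations of Proposition~\ref{p:max} at the beginning and end are out of place: that proposition asserts invariance of the homology under passing to maximal subspaces, whereas here $\mathcal{V}_G$ is \emph{by definition} the family of maximal subspaces in $p^{-1}(\Sigma^c)\cup\{0\}$, so no appeal to that proposition is needed to describe $\mathcal{V}_G$. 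Dropping those references and inserting the finite-cover observation yields a complete proof.
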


Our next goal is to show that $H_1(\mathcal{V}_G)$ is nontrivial for $G=\pso$ under certain conditions.
To do this, we build a cycle and show that it represents a nontrivial homology class.
As is often the case with homology theories, it is convenient to do this by pairing our cycle with a cocycle.

We do not give a full treatment of a cohomology theory of subspace arrangements here.
However, we make the following  definition: for a subspace arrangement $(V,\mathcal{V})$ over $\mathbb{K}$, we define 
\[C^*(V,\mathcal{V})=\hom(C_*(V,\mathcal{V}),\mathbb{K}),\]
and for $f\in C^n(V,\mathcal{V})$, define $df=f\circ\partial$.
This is a cochain complex and we define cocycles, coboundaries and cohomology as usual.

This means that a $1$-cochain $f$ in $C^1(V,\mathcal{V})$ is determined by a family  $\{f_W\}_{W \in \mathcal{V}}$ of linear functionals on each subspace; each $f_W$ is the restriction of $f$ to the $W$-summand of $\bigoplus\mathcal{V}=C_1(V,\mathcal{V})$.
Such a collection of functionals determines a cocycle if, for any two subspaces $W_1$ and $W_2$, the linear maps $f_{W_1}$ and $f_{W_2}$ agree on $W_1 \cap W_2$ (this is easily seen to be equivalent to $f\circ \partial=0$).
The cocycle $f$ represents the trivial cohomology class if and only if there exists a linear functional $\tilde f\colon V \to \R$ such that each $f_W$ is the restriction of $\tilde f$ to $W$ (this is the same as saying that $f=\tilde f\circ \partial$).

Suppose $c=(c_W)_{W\in\mathcal{V}}$ is a $1$-chain in $C_1(V,\mathcal{V})=\bigoplus\mathcal{V}$.
If the $1$-cocycle $f$ is expressed as a family of functionals $\{f_W\}_{W\in\mathcal{V}}$, then $f(c)$ is the sum $\sum_{W\in\mathcal{V}}f_W(c_W)$.
As usual, the evaluation of $1$-cocycles on $1$-cycles descends to a well defined evaluation of cohomology classes on homology classes.
In particular, if $c$ is a $1$-boundary, then $f(c)=0$ for any $1$-cocycle $f$.
So if $f(c)\neq 0$ for some cocycle, then $c$ represents a nontrivial homology class.

\begin{proposition}
Suppose $A_\Gamma$ is a RAAG such that for some vertex $a\in  \Gamma$, the support graph $\Delta_a$ contains a loop.
Let $G=\pso$ and let $\mathcal{V}_G$ be the excluded subspace configuration for the BNS invariant of $G$ in $V=\hom(G;\R)$.
Then $H_1(\mathcal{V}_G)\neq0$.
\end{proposition}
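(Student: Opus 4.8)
The plan is to use the cohomology pairing just introduced: produce an explicit $1$-cycle $c$ in $C_*(\mathcal V_G)$ coming from the loop in $\Delta_a$, together with a $1$-cocycle $f$, and check that $f(c)\neq0$. Since a cocycle vanishes on all boundaries, this forces $[c]\neq0$ in $H_1(\mathcal V_G)$, which is what we want.

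\emph{The cycle.} Write the loop in $\Delta_a$ as $K_1,\dots,K_m$, so the $K_i$ are distinct components of $\G-\st(a)$ and, with indices read modulo $m$, the edges $e_i=\{K_i,K_{i+1}\}$ of $\Delta_a$ are distinct. By definition of $\Delta_a$, each $e_i$ is a dominating--shared pair for some vertex $b_i$: one of $K_i,K_{i+1}$ is $[b_i]_a$ and the other is a shared component $L$ of the SIL-pair $(a,b_i)$. A direct check against Definition~\ref{d:psets} shows that for any SIL-pair $(a,b)$ with shared component $L$ the set $\{\pi^a_{[b]_a},\pi^a_L,\pi^b_{[a]_b},\pi^b_L\}$, partitioned by multiplier, is a $\delta$-p-set; it therefore extends to a maximal $\delta$-p-set $S_i$, and $W_i:=V_{S_i}\in\mathcal V_G$ contains the character $w_i:=\chi^a_{K_i}-\chi^a_{K_{i+1}}$. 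Placing $w_i$ in the $W_i$-summand of $C_1(\mathcal V_G)=\bigoplus_{W\in\mathcal V_G}W$, set $c=\sum_{i=1}^m w_i$. Then $\partial_1 c=\sum_{i=1}^m(\chi^a_{K_i}-\chi^a_{K_{i+1}})=0$ telescopes around the loop, so $c$ is a $1$-cycle.

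\emph{The cocycle.} Let $\pi_a\colon\hom(\pso;\R)\to\R^{V(\Delta_a)}$, $\chi\mapsto(\chi(\pi^a_K))_{K\in\Delta_a}$; its image lies in $P_a=\{t:\sum_K t_K=0\}$ because characters of $\pso$ vanish on inner automorphisms. The key structural point is that in any $\delta$-p-set the multiplier $a$ occurs zero or exactly two times, so $\pi_a(V_S)$ is either $0$ or a single line $\R(e_K-e_L)$, where $\{e_K\}$ is the standard basis of $\R^{V(\Delta_a)}$; and since $S_i$ already contains $\pi^a_{K_i},\pi^a_{K_{i+1}}$, the multiplier $a$ occurs in it exactly twice, so $\pi_a(W_i)=\R(e_{K_i}-e_{K_{i+1}})$. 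Fix one loop edge, say $e_1=\{K_1,K_2\}$, and define $f\in C^1(\mathcal V_G)$ by: for $v$ in the $W$-summand, $f_W(v)$ is the coefficient of $e_{K_1}-e_{K_2}$ in $\pi_a(v)$ if $\pi_a(W)\subseteq\R(e_{K_1}-e_{K_2})$, and $f_W(v)=0$ otherwise. This $f$ is a cocycle: on an overlap $W\cap W'$ where $\pi_a(W)=\R(e_{K_1}-e_{K_2})$ but $\pi_a(W')$ is a different line or $0$, any $v\in W\cap W'$ has $\pi_a(v)$ in the intersection of two distinct lines, hence $\pi_a(v)=0$ and both functionals return $0$; in the remaining cases the two functionals are given by the same formula. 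Finally $f(c)=\sum_i f_{W_i}(w_i)$, and since $\pi_a(W_i)=\R(e_{K_i}-e_{K_{i+1}})$ equals $\R(e_{K_1}-e_{K_2})$ only for $i=1$ (the loop edges being distinct), only the $i=1$ term survives and $f(c)=1\neq0$. Hence $c$ is not a boundary and $H_1(\mathcal V_G)\neq0$.

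Equivalently, $\pi_a$ is a morphism of subspace arrangements onto $(P_a,\{\R(e_K-e_L):\{K,L\}\in E(\Delta_a)\})$, whose degree-$1$ homology is the cycle space of the graph $\Delta_a$ (distinct lines meet only at $0$, so the chain complex sits in degrees $0$ and $1$ and $\partial_1$ is the graph boundary map), and $c$ maps to the graph cycle carried by the loop, which is nonzero; functoriality then yields the result. I expect the main work to be the two routine-but-fiddly verifications: that $\{\pi^a_{K_i},\pi^a_{K_{i+1}}\}$ genuinely extends to a $\delta$-p-set (so that $c$ lies in $C_1(\mathcal V_G)$ in the first place), and that the locally defined functionals $f_W$ patch into an honest cocycle, the latter being the only place where one needs the ``line-or-zero'' shape of the projected subspaces.
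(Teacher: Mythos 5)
Your argument is correct and follows essentially the same strategy as the paper's: build a $1$-cycle $c$ from the loop in $\Delta_a$ using maximal $\delta$-p-sets $S_i\supset\{\pi^a_{K_i},\pi^a_{K_{i+1}}\}$, then pair it against a $1$-cocycle supported on the single edge $\{K_1,K_2\}$, with the cocycle condition holding because a $\delta$-p-set carries zero or exactly two multiplier-$a$ generators. Your projection map $\pi_a\colon\hom(\pso;\R)\to\R^{V(\Delta_a)}$ and the closing remark that $\pi_a$ is a morphism of subspace arrangements whose target has $H_1$ equal to the cycle space of $\Delta_a$ is a tidy structural packaging (via the functoriality established in Section~\ref{s:funct}) of the same cocycle the paper writes down directly, rather than a genuinely different route.
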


\begin{proof}
Let $(K_1,\dotsc,K_n)$ be a loop in $\Delta_a$ involving $n\geq 3$ distinct vertices.
By the definition of $\Delta_a$, for each $i$ (from $1$ to $n$ and counting modulo $n$), there is an element $b_i$ such that either (1) $b_i\in K_i$, and $K_{i+1}$ is a shared component of $b_i$ and $a$; or (2) $b_i\in K_{i+1}$, and $K_i$ is a shared component of $b_i$ and $a$.
This implies that either $\{\pi^a_{K_i},\pi^a_{K_{i+1}}\}\cup\{\pi^{b_i}_{[a]},\pi^{b_i}_{K_{i+1}}\}$ or $\{\pi^a_{K_i},\pi^a_{K_{i+1}}\}\cup\{\pi^{b_i}_{[a]},\pi^{b_i}_{K_{i}}\}$ is a $\delta$-p-set.
Each of these sets is contained in a maximal $\delta$-p-set.
So for $i=1,\dotsc,n$, let $S_i$ be a maximal $\delta$-p-set with $\pi^a_{K_i},\pi^a_{K_{i+1}}\in S_i$, and for $i=n+1,\dotsc, m$, let $\{S_i\}_i$ label the remaining maximal $\delta$-p-sets in any order. 
Let $V_i$ be the span of $S_i$ for $i=1,\dotsc,m$; then Corollary~\ref{co:psobnsarrangement} says that $\mathcal{V}_G=\{V_1,V_2,\ldots,V_m\}$.
We build the following element of $C_1(\mathcal{V}_G)=\bigoplus_{i=1}^m V_i$:
\[x=(\chi^a_{K_1}-\chi^a_{K_2},\chi^a_{K_2}-\chi^a_{K_3},\dotsc,\chi^a_{K_{n-1}}-\chi^a_{K_n},\chi^a_{K_n}-\chi^a_{K_1},0,\dotsc,0)\in C_1(\mathcal{V}_G).\]
This $x$ a cycle for $H_1(\mathcal{V}_G)$, since the sum of its components is zero.

To show that $x$ represents a nontrivial homology class, we build a cocycle.
Define a set $T\subset \{1,\dotsc,m\}$ by
\[T=\big\{i\in\{1,\dotsc,m\} : \{\pi^a_{K_1},\pi^a_{K_2}\}\subset S_i\big\}.\]
We define functionals $f_{V_i}\colon V_i\to \R$ for $i=1,\dotsc,m$ 
as follows:
if $i\in T$, then $f_{V_i}(\chi)= \chi(\pi^a_{K_1})$ for $\chi\in V_i$; 
if $i\notin T$, then $f_{V_i}=0$.

To show that these functionals patch together to form a cocycle $f$,
we need to show that they agree on the intersections of their domains.
Let $1\leq i< j\leq m$.
If both $i$ and $j$ are in $T$, or both $i$ and $j$ are not in $T$, then clearly
$f_{V_i}|_{V_i\cap V_j}=f_{V_j}|_{V_i\cap V_j}$.
So suppose that $i\in T$ and $j\notin T$.
Let $\chi\in V_i\cap V_j$.
By Proposition~\ref{p:char} and Theorem~\ref{t:kp}, $\supp(\chi)$ contains exactly zero or two standard generators with multiplier $a$.
Since $\chi\in V_j$, we know $\supp(\chi)\subset S_j$, so $\supp(\chi)$ does not contain both $\pi^a_{K_1}$ and $\pi^a_{K_2}$.
But $\chi\in V_i$ and $i \in T$, so if $\supp(\chi)\subset S_i$ contains a standard generator with multiplier $a$ it contains both $\pi^a_{K_1}$ and $\pi^a_{K_2}$ with $\chi(\pi^a_{K_1})=-\chi(\pi^a_{K_2})$. 
Therefore $\supp(\chi)$ does not contain any generators with multiplier $a$.
This means that $f_{V_i}(\chi)=\chi(\pi^a_{K_1})=0=f_{V_j}(\chi)$.
The case where $j\in T$ and $i\notin T$ is identical, so we have that the $\{f_{V_i}\}_i$ agree on all pairwise intersections of spaces from $\mathcal{V}_G$.
This means that these functionals  patch together to form a cocycle $f$ in $C^1(\mathcal{V}_G)$.

Now it is enough to show that $f(x)\neq 0$.
By our numbering of $S_1,\dotsc, S_m$, we know that $T\cap\{1,\dotsc,n\}=\{1\}$, and the $(n+1)$st through $m$th components of $x$ are $0$.
So $f(x)=(\chi^a_{K_1}-\chi^a_{K_2})(\pi^a_{K_1})=1$.
Hence $H_1(\mathcal{V}_G)\neq 0$.
\end{proof}

\section{Finding a graphical RAAG presentation for $\pso$.}\label{s:presentation}

We now give a right-angled Artin presentation for $\pso$ when all support graphs are forests. We will be working with outer automorphism classes of elements throughout, however for ease of reading we suppress the bracket notation and write elements as $\pi^a_K \in \pso$ rather than $[\pi_K^a]$. 

\subsection{An alternative generating set for $\pso$}

Throughout this section we suppose that each support graph $\Delta_a$ is a forest with $k_a+1$ maximal subtrees (connected components) $C^a_0,\ldots,C^a_{k_a}$.
Since the vertices of $\Delta_a$ represent connected components of $\G-\st(a)$, and $\Delta_a$ has its own connected components, we usually refer to the connected components of $\Delta_a$ as maximal subtrees to avoid confusing repetition of the term ``component".
We pick a basepoint $x^a_i$ in each tree $C^a_i$. 
We say that $x_0^a$ is the \emph{preferred basepoint} of the forest $\Delta_a$. 

We need two kinds of generators for our generating set for $\pso$. We use a set of partial conjugations that are not necessarily standard generators 
We have already introduced the first kind.
Suppose $C=C_i^a$ is a maximal subtree of $\Delta_a$. 
Recall that $\zg{a}{C}$ denotes be the product 
\[\zg{a}{C}=\prod_{K \in C} \pi_K^a
\] 
 over all elements $K$ of the vertex set of $C$ (each $K$ is a connected component of $\G-\st(a)$).
These elements are central in $\pso$ by Proposition~\ref{p:r2}. 

We also introduce an element $\eg{a}{e}$ associated to each edge in $\Delta_a$. 
\begin{definition} Let $e$ be an edge in a maximal subtree $C$ of $\Delta_a$ with basepoint $x \in C$. 
The interior of the edge $e$ separates $C$ into two pieces. 
Let $\mathcal{L}$ be the component of $C - e$ which does not contain the basepoint $x$. 
We define $$\eg{a}{e} =\prod_{K \in \mathcal{L} }\pi_K^a
$$
\end{definition}
The choice of basepoint gives a uniform way of choosing a component of $C - e$, however this choice does not matter too much, at least in terms of commuting elements in $\pso$:
\begin{lemma}\label{l:changeofbasepoint}
Let $\mathcal{L}'$ be the component of $C-e$ which contains the basepoint of $C$, and let $$(\eg{a}{e})'=\prod_{K\in \mathcal{L'}} \pi_K^a.$$ 
Then $\eg{a}{e}(\eg{a}{e})'$ is central in $\pso$. 
In particular an element commutes with $\eg{a}{e}$ if and only if it commutes with $(\eg{a}{e})'$.
\end{lemma}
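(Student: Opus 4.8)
The plan is to observe that $\eg{a}{e}$ and $(\eg{a}{e})'$ are, up to the central element $\zg{a}{C}$, inverse to each other in $\pso$, and then invoke Proposition~\ref{p:r2}. Concretely, first I would note that $\mathcal{L}$ and $\mathcal{L}'$ are precisely the two components of $C - e$, so their vertex sets partition the vertex set of $C$. Hence, working in $\out(A_\G)$ where partial conjugations with a fixed multiplier multiply by taking unions of components,
\[
\eg{a}{e}\,(\eg{a}{e})' = \Big(\prod_{K\in\mathcal{L}}\pi^a_K\Big)\Big(\prod_{K\in\mathcal{L}'}\pi^a_K\Big) = \prod_{K\in C}\pi^a_K = \zg{a}{C}.
\]
Since all the $\pi^a_K$ with multiplier $a$ commute with one another (they satisfy (R1) with $a=b$, or equivalently are all conjugation by powers of $a$ on disjoint unions of components), the order of the product is immaterial and this identity is literally an equality in $\pso$.

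Next I would apply Proposition~\ref{p:r2}, which states that $\zg{a}{C}$ is central in $\pso$ for every maximal subtree $C$ of $\Delta_a$; this gives the first assertion of the lemma directly. For the ``in particular'' clause, suppose $g\in\pso$ commutes with $\eg{a}{e}$. Then $g$ commutes with $\zg{a}{C}(\eg{a}{e})^{-1}$ — it commutes with $\zg{a}{C}$ because that element is central, and with $(\eg{a}{e})^{-1}$ because it commutes with $\eg{a}{e}$. By the displayed identity, $(\eg{a}{e})' = \zg{a}{C}(\eg{a}{e})^{-1}$ (note $\eg{a}{e}$ is invertible in the group $\pso$, so this makes sense even though $\eg{a}{e}$ is only a product of standard generators), so $g$ commutes with $(\eg{a}{e})'$. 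The converse is symmetric, swapping the roles of $\mathcal{L}$ and $\mathcal{L}'$.

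I do not anticipate a genuine obstacle here; the only point requiring a little care is making sure the bookkeeping of ``products of partial conjugations with the same multiplier'' is clean. The relevant facts are that $\pi^a_C = \prod_{i}\pi^a_{K_i}$ when $C = \bigcup_i K_i$ (stated in Section~\ref{s:background}), that these commute among themselves, and that in $\pso$ the full product $\prod_{K\in I_a}\pi^a_K$ over all components of $\G-\st(a)$ is trivial (relation (R5) of Corollary~\ref{c:pres}) — though we do not even need (R5) here, since $C$ need not be all of $I_a$; Proposition~\ref{p:r2} already supplies centrality of $\zg{a}{C}$. So the mild subtlety is purely notational: one must phrase the argument in $\pso$, where $\eg{a}{e}$ is a bona fide invertible group element, rather than pretending the identity $\eg{a}{e}(\eg{a}{e})' = \zg{a}{C}$ says something about inverses of monoid elements.
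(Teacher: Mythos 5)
Your proof is essentially identical to the paper's: both observe that $V(\mathcal{L})\cup V(\mathcal{L}')=V(C)$ gives $\eg{a}{e}(\eg{a}{e})'=\zg{a}{C}$, invoke Proposition~\ref{p:r2} for centrality, and deduce the equality of centralizers. The extra bookkeeping remarks you add are harmless elaborations, not a different route.
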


\begin{proof}
We simply observe that, as $V(\mathcal{L}) \cup V(\mathcal{L}')=V(C)$:
\[\eg{a}{e}(\eg{a}{e})'=\zg{a}{C},\]
which is central, by Proposition~\ref{p:r2}.
Since $(\eg{a}{e})'$ is the product of $\eg{a}{e}$ with a central element, we see that anything that commutes with $\eg{a}{e}$ also commutes with $(\eg{a}{e})'$.
By symmetry, they have exactly the same centralizers.
\end{proof}

Although elements of the form $\eg{a}{e}$ are not central in $\pso$, there is quite a strong requirement for a commutator of the form $[\eg{a}{e},\eg{b}{f}]$ to be nonzero.
\begin{proposition} \label{l:r1}
Let $e$ and $f$ be edges of $\Delta_a$ and $\Delta_b$ respectively. Then $\eg{a}{e}$ and $\eg{b}{f}$ commute unless:
\begin{itemize}
\item $(a,b)$ is an SIL pair and
\item  the edges $e$ and $f$ are of the form $\{[b]_a,L\}$ and $\{[a]_b, L\}$, where $L$ is a shared component of $(a,b)$.
\end{itemize}
\end{proposition}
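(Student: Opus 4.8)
The plan is to reduce the question about commutators $[\eg{a}{e},\eg{b}{f}]$ to the commutation criterion for standard generators given in Lemma~\ref{l:comm}. Recall that $\eg{a}{e}=\prod_{K\in\mathcal{L}}\pi^a_K$ is a partial conjugation with multiplier $a$, and $\eg{b}{f}=\prod_{M\in\mathcal{M}}\pi^b_M$ is a partial conjugation with multiplier $b$, where $\mathcal{L}$ is the subtree of $\Delta_a-e$ not containing the basepoint and $\mathcal{M}$ is the analogous subtree of $\Delta_b-f$. Expanding the commutator, $[\eg{a}{e},\eg{b}{f}]=1$ in $\pso$ provided $[\pi^a_K,\pi^b_M]=1$ in $\out(A_\G)$ for every $K\in\mathcal{L}$ and every $M\in\mathcal{M}$. (If $a=b$ then both elements are central products of the $\pi^a_K$'s, hence commute — though in fact $a=b$ cannot produce the exceptional case below, so I will note this and move on.) So the first step is: assume $(a,b)$ is \emph{not} an SIL-pair, or $e$ and $f$ are \emph{not} both dominating-shared edges of the required form, and deduce that every pairwise commutator $[\pi^a_K,\pi^b_M]$ vanishes in $\out(A_\G)$.

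If $(a,b)$ is not an SIL-pair, then by the last sentence before Lemma~\ref{l:comm} (or the final clause of Lemma~\ref{l:comm} itself) $\pi^a_K$ and $\pi^b_M$ always commute in $\pso$, so we are immediately done. Hence assume $(a,b)$ \emph{is} an SIL-pair but the edges are not of the stated form. By Lemma~\ref{l:comm}, $[\pi^a_K,\pi^b_M]\neq1$ in $\out(A_\G)$ only in three situations: $K=[b]_a$ and $M=[a]_b$ are both dominating; one of them is dominating and the other is a shared component; or $K=M$ is a shared component. The key observation is that $e$, as an edge of the forest $\Delta_a$, is a dominating-shared pair $\{[c]_a, N\}$ for some vertex $c\in[c]_a$ with $N$ a shared component of $(a,c)$; and by the Star Lemma, when $\Delta_a$ is a forest the subtree $\mathcal{L}$ — being one of the two pieces of $C-e$ — has a very constrained vertex set. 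Concretely I would argue: $\mathcal{L}$ contains the shared component $N$ but the dominating vertex $[c]_a$ lies on the other side (or vice versa depending on which side the basepoint is on, and here Lemma~\ref{l:changeofbasepoint} lets me not worry about the choice), and the remaining vertices of $\mathcal{L}$ are subordinate components for the relevant SIL-pairs. The second step is therefore to carry out this case analysis: for each of the three "dangerous" configurations in Lemma~\ref{l:comm}, show that if $K\in\mathcal{L}$ and $M\in\mathcal{M}$ realize it, then $e=\{[b]_a,L\}$ and $f=\{[a]_b,L\}$ for a common shared component $L$ of $(a,b)$, i.e. we are in the excluded case — contradiction.

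The main obstacle, and the step requiring the most care, is controlling the vertex sets of $\mathcal{L}$ and $\mathcal{M}$ using the tree structure. One has to show: (i) a subtree $\mathcal{L}$ of $C-e$ can contain at most one of $[b]_a$ and a shared component $L$ of $(a,b)$ that is "attached near" $[b]_a$, because the Star Lemma says all shared components of $(a,b)$ are leaves adjacent to $[b]_a$, so cutting the edge from $[b]_a$ to $L$ separates them, while cutting any other edge leaves them on the same side; and (ii) if both $[b]_a\in\mathcal{L}$ and $[a]_b\in\mathcal{M}$, then the only way to get $[\pi^a_{[b]_a},\pi^b_{[a]_b}]\neq1$ forces $e$ to be an edge incident to $[b]_a$ and $f$ incident to $[a]_b$, and tracing through which edges these are (again via the Star Lemma, the shared components of $(a,b)$ are exactly the common leaves) pins down $L$. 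I would organize this as: first handle the "$K=M$ shared" case (forces $K=M=L$ a shared component of $(a,b)$, then check which edges $e,f$ put $L$ into $\mathcal{L}$ resp. $\mathcal{M}$ — essentially only the edge joining $L$ to $[b]_a$, resp. $L$ to $[a]_b$); then the "one dominating, one shared" case; then the "both dominating" case. In each, the Star Lemma plus the fact that $\Delta_a,\Delta_b$ are forests (so "subtree on one side of an edge" is unambiguous and separation behaves well) does the work. A convenient bookkeeping device throughout will be Lemma~\ref{l:changeofbasepoint}, which lets me replace $\eg{a}{e}$ by $(\eg{a}{e})'$ whenever the basepoint ended up on the inconvenient side, so that I may always assume $[b]_a\notin\mathcal{L}$ (equivalently $\mathcal{L}$ is the side containing the shared leaves), streamlining the casework.
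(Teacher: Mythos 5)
Your approach — reducing to pairwise commutators via Lemma~\ref{l:comm}, using the Star Lemma to constrain the subtrees, and invoking Lemma~\ref{l:changeofbasepoint} to handle the basepoint ambiguity — is the same collection of tools the paper uses, and it does lead to a correct proof. But your write-up has a genuine organizational flaw that, if carried out as sketched, would fail, and I want to flag it clearly.

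The issue is the order of operations. You treat Lemma~\ref{l:changeofbasepoint} as "a convenient bookkeeping device" to be applied after doing casework on the three dangerous configurations from Lemma~\ref{l:comm}. But the reduction to pairwise commutators is only a \emph{sufficient} condition for commutation, and without the basepoint swap it is not sharp enough. Concretely: if $e=\{[b]_a,L_1\}$ and $f=\{[a]_b,L_2\}$ with $L_1\neq L_2$ distinct shared components, and the basepoints happen to sit on the $L_1$ side of $e$ and the $L_2$ side of $f$, then $\mathcal{L}$ contains $[b]_a$ and $L_2$, and the pairwise commutators $[\pi^a_{[b]_a},\pi^b_{L_2}]$ and $[\pi^a_{L_2},\pi^b_{L_2}]$ are both nontrivial — yet the total commutator $[\eg{a}{e},\eg{b}{f}]$ vanishes (this is exactly a relation (R4) situation). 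So "show every pairwise commutator vanishes" is simply false in these configurations, and you cannot deduce it. Relatedly, your point (ii) is a non-sequitur: whether $[\pi^a_{[b]_a},\pi^b_{[a]_b}]$ vanishes depends only on whether $(a,b)$ is an SIL-pair, not on $e$ or $f$, so nothing about $e$ and $f$ is "forced" by that commutator.

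The fix is to apply the swap \emph{first}: use Lemma~\ref{l:changeofbasepoint} to arrange $[b]_a\notin\mathcal{L}$ and $[a]_b\notin\mathcal{M}$ before looking at any pairwise commutators. Once you do this, the "both dominating" and "one dominating, one shared" cases from Lemma~\ref{l:comm} simply cannot occur (the dominating vertex isn't in the relevant subtree), so the only dangerous pairwise commutator is $K=M=L$ a shared component. Then the Star Lemma does exactly what you want: since $L$ is a leaf adjacent to $[b]_a$ in $\Delta_a$, having $L\in\mathcal{L}$ while $[b]_a\notin\mathcal{L}$ forces $e$ to be the single edge $\{[b]_a,L\}$ separating them, and similarly $f=\{[a]_b,L\}$. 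This is the excluded case, so outside it all pairwise commutators vanish and you are done. With that reordering your argument matches the paper's; the paper reaches the same conclusion by first pinning $e$ to the star of $[b]_a$ (via the "one side is all subordinate" contradiction) and then cancelling, but the underlying mechanism is identical.
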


\begin{proof}
Suppose that $[\eg{a}{e},\eg{b}{f}]\neq1$.  
If $(a,b)$ do not form an SIL-pair then $\eg{a}{e}$ and $\eg{b}{f}$ commute as all standard generators of the form $\pi_K^a$ and $\pi_L^b$ commute. 
We may therefore assume that $(a,b)$ is an SIL-pair. 
Let $C$ be the maximal subtree of $\Delta_a$ containing the dominating component $[b]_a$, and let $D$ be the maximal subtree of $\Delta_b$ containing $[a]_b$. 
The elements $\eg{a}{e}$ and $\eg{b}{f}$ will commute unless $e \in C$ and $f \in D$, as otherwise one of the products $\eg{a}{e}$ or $\eg{b}{f}$ will consist of standard generators only corresponding to subordinate components for $(a,b)$. 
Let $C' \subset C$ be the star of $[b]_a$ in $\Delta_a$; so $C'$ contains $[b]_a$ together with all the shared components of $(a,b)$.
Suppose for contradiction that $e$ is not an edge of $C'$;
then one component $\mathcal{L}$ or $\mathcal{L'}$ of $\Delta_a -e$ is disjoint from $C'$ and contains only vertices of subordinate components. 
Hence $\eg{b}{f}$ commutes with either $\eg{a}{e}$ or $(\eg{a}{e})'$, 
so by Lemma~\ref{l:changeofbasepoint}, it commutes with $\eg{a}{e}$. 
This contradicts our hypothesis, so $e$ must be an edge of $C'$.
The same argument applies with the location of $f$ in $D$. 
It follows that both $e$ and $f$ are of the form $\{[b]_a,L\}$ and $\{[a]_b, L'\}$ respectively, where $L$ and $L'$ are shared components for $(a,b)$. 
Lemma~\ref{l:changeofbasepoint} allows us to assume that the component of $C - e$ (respectively $D-f$) which does not contain the basepoint is the one containing $L$ (respectively $L'$), so that 
\begin{equation*} \eg{a}{e}=\pi_L^a \prod_{K} \pi_K^a \text{ and } \eg{b}{f}=\pi_{L'}^b \prod_{K'} \pi_{K'}^b \end{equation*}
where each $\pi_K^a$ (respectively $\pi^b_{K'}$) in the product is subordinate for the pair $(a,b)$.  
As partial conjugations along distinct shared components commute, it follows that $L=L'$ when $[\eg{a}{e},\eg{b}{f}]\neq1$.
\end{proof}

\subsection{The right-angled Artin presentation}

We are now in a position to give an explicit right-angled Artin presentation for the group $\pso$.
\begin{definition}
Let $A_\Theta$ be the right-angled Artin group with defining graph $\Theta$ given by vertices of the form:
\begin{itemize}
\item $v^a_e$ for each vertex $a \in \G$ and each edge $e$ in $\Delta_a$.
\item $v^a_{C}$ for each vertex $a \in \G$ and each maximal subtree $C$ of $\Delta_a$ not equal to the tree $C_0^a$ containing the preferred basepoint.
\end{itemize}
The graph $\Theta$ is given the following edges:
\begin{itemize}
\item There is an edge between each vertex $v^a_{C}$ and every other vertex in $\Theta$.
\item There is an edge between $v^a_e$ and $v^b_f$ unless $(a,b)$ forms an SIL-pair and $e=\{[b]_a,L\}$ and $f=\{[a]_b,L\}$ for some shared component $L$ of $(a,b)$.
\end{itemize}
\end{definition}

Note that the definition of $A_\Theta$ depends on the location of the preferred basepoints but is independent of the remaining basepoints. Propositions \ref{p:r2} and \ref{l:r1} immediately imply the following:

\begin{proposition}
The map on generators given by $\phi(v^a_C)=\zg{a}{C}$ and $\phi(v^a_e)=\eg{a}{e}$ induces a homomorphism $\phi\colon A_\Theta \to \pso$.
\end{proposition}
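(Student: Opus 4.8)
The plan is to verify that the proposed assignment $\phi(v^a_C)=\zg{a}{C}$ and $\phi(v^a_e)=\eg{a}{e}$ respects the defining relations of $A_\Theta$; since $A_\Theta$ is a RAAG, its only relations are that certain pairs of generators commute, so it suffices to check that whenever two vertices of $\Theta$ are joined by an edge, their images in $\pso$ commute. There are three kinds of edges to inspect, and each is handled by one of the two propositions just proved.

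\begin{proof}
Since $A_\Theta$ is a right-angled Artin group, to define a homomorphism $\phi\colon A_\Theta\to\pso$ it is enough to specify the images of the generators and check that every defining relation of $A_\Theta$ is satisfied in $\pso$. The defining relations are precisely the commutation relations $[u,w]=1$ for pairs $\{u,w\}\in E(\Theta)$, so we must verify that $\phi(u)$ and $\phi(w)$ commute in $\pso$ for each such pair. There are three types of edges in $\Theta$ to consider.

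First, suppose $u=v^a_C$ for some vertex $a$ and some maximal subtree $C\neq C^a_0$ of $\Delta_a$; by definition $u$ is adjacent to every other vertex of $\Theta$. Then $\phi(u)=\zg{a}{C}$, which is central in $\pso$ by Proposition~\ref{p:r2}, so it commutes with $\phi(w)$ for every vertex $w$ of $\Theta$. This disposes of all edges incident to a vertex of the form $v^a_C$, in particular the edges between two such vertices.

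It remains to check the edges of the form $\{v^a_e,v^b_f\}$, where $e$ is an edge of $\Delta_a$ and $f$ is an edge of $\Delta_b$. By the definition of $\Theta$, such an edge is present exactly when it is \emph{not} the case that $(a,b)$ is an SIL-pair with $e=\{[b]_a,L\}$ and $f=\{[a]_b,L\}$ for some shared component $L$ of $(a,b)$. But Proposition~\ref{l:r1} states that $\eg{a}{e}=\phi(v^a_e)$ and $\eg{b}{f}=\phi(v^b_f)$ commute in $\pso$ unless precisely that excluded configuration occurs. Hence whenever $\{v^a_e,v^b_f\}\in E(\Theta)$, the images $\phi(v^a_e)$ and $\phi(v^b_f)$ commute. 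This is the only type of edge not yet covered, so all defining relations of $A_\Theta$ hold in $\pso$ and $\phi$ is a well-defined homomorphism.
\end{proof}

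There is essentially no obstacle here: the statement is designed so that the two preceding propositions combine to give exactly the required commutation relations, and the only point requiring care is the bookkeeping that the edge set of $\Theta$ was defined precisely to match the non-commutation criterion of Proposition~\ref{l:r1}.
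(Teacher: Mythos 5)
Your proof is correct and is exactly the argument the paper has in mind: the paper states that Propositions~\ref{p:r2} and~\ref{l:r1} ``immediately imply'' the result and gives no separate proof, and your write-up simply spells out that immediate implication by checking each edge type of $\Theta$ against the two propositions.
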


\subsection{Constructing an inverse map}

To show that $\pso \cong A_\Theta$ we  will construct an inverse map $\psi\colon\pso \to A_\Theta$. The first step is to write each standard generator $\pi_K^a$ as a product of elements of the form $\zg{a}{C}$ and $\eg{a}{e}$.

\begin{lemma}\label{l:gens}
Let $\pi^a_K$ be a standard generator of $\pso$. 
Let $C_0,\ldots,C_k$ be the set of maximal trees in the support graph $\Delta_a$, and let $e_0,\ldots,e_n$ be the edges of the support graph adjacent to $K$. 

\begin{enumerate} 
\item If $K$ is not the basepoint of its subtree in $\Delta_a$ and $e_0$ is the edge adjacent to $K$ in the direction of the basepoint then  
$$\pi_K^a=\eg{a}{e_0} (\eg{a}{e_1})^{-1} \cdots (\eg{a}{e_n})^{-1}.$$
\item If $K$ is the basepoint of some tree $C\neq C_0$  then 
$$\pi^a_K=\zg{a}{C} (\eg{a}{e_0})^{-1} (\eg{a}{e_1})^{-1}\cdots (\eg{a}{e_n})^{-1}.$$
\item If $K$ is the preferred basepoint of $\Delta_a$, then:
$$\pi^a_K=(\zg{a}{C_1})^{-1}\cdots (\zg{a}{C_k})^{-1}\cdot (\eg{a}{e_0})^{-1}(\eg{a}{e_1})^{-1} \cdots (\eg{a}{e_n})^{-1}.$$
\end{enumerate}

\end{lemma}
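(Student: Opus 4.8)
The plan is to verify each of the three formulas by a direct computation in $\pso$, reducing everything to a statement about partial conjugations with a fixed multiplier $a$. The key observation is that all of the elements involved ($\pi^a_K$, $\zg{a}{C}$, $\eg{a}{e}$) are partial conjugations with multiplier $a$, and since two partial conjugations $\pi^a_C$ and $\pi^a_D$ with the same multiplier satisfy $\pi^a_C \pi^a_D = \pi^a_{C \cup D}$ whenever $C$ and $D$ are disjoint unions of components (and the generators $\pi^a_K$ for distinct $K$ all commute), the right-hand side of each formula can be rewritten as a single partial conjugation $\pi^a_{W}$ for an appropriate union $W$ of components of $\G - \st(a)$, where $W$ is counted with signs (i.e. we track which components appear an odd number of times). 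So the whole lemma reduces to a combinatorial bookkeeping statement in the free abelian group on the vertex set of $\Delta_a$: we must check that the signed sum of vertex sets on the right-hand side equals $\{K\}$.

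For part (1): $\eg{a}{e_0}$ is the product of $\pi^a_J$ over all $J$ in the component $\mathcal{L}_0$ of $C - e_0$ not containing the basepoint; since $e_0$ points toward the basepoint, $\mathcal{L}_0$ consists of $K$ together with everything "beyond" $K$ away from the basepoint, which is exactly $K$ together with the subtrees hanging off the other edges $e_1,\dots,e_n$ at $K$. Each $\eg{a}{e_i}$ for $i \ge 1$ is the product of $\pi^a_J$ over the component of $C - e_i$ not containing the basepoint, and since the basepoint is on the $K$-side of $e_i$, this component is precisely the subtree hanging off $e_i$ away from $K$. Thus in the signed count, $\eg{a}{e_0}(\eg{a}{e_1})^{-1}\cdots(\eg{a}{e_n})^{-1}$ contributes $+1$ to each component in $\mathcal{L}_0$ and $-1$ to each component in the subtrees off $e_1,\dots,e_n$; these cancel everywhere except at $K$ itself, which appears only in $\mathcal{L}_0$. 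This gives $\pi^a_K$. For part (2), the only change is that $K$ is now the basepoint of its subtree $C$, so there is no edge "toward the basepoint"; instead we start from $\zg{a}{C} = \prod_{J \in C}\pi^a_J$ (the full subtree) and subtract off $\eg{a}{e_i}$ for every edge $e_i$ at $K$ (each such edge points away from $K$, so its $\eg{a}{e_i}$ is the subtree beyond $e_i$); the signed count again leaves only $K$. For part (3), $K = x_0^a$ is the preferred basepoint, so its subtree is $C_0$ which has no associated vertex $v^a_{C_0}$; we use $\zg{a}{C_0} = (\zg{a}{C_1})^{-1}\cdots(\zg{a}{C_k})^{-1}$ in $\pso$ (this is exactly the relation (R5) / the fact that $\prod_{K}\pi^a_K = 1$, rearranged), and then apply the computation of part (2) with $C = C_0$.

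I would organize the proof by first recording the "signed multiset" reformulation: for any formal signed sum $\sum_i \epsilon_i (\prod_{J \in U_i}\pi^a_J)$ with $U_i$ subsets of the vertex set of $\Delta_a$, the product in $\pso$ equals $\prod_{J} (\pi^a_J)^{m_J}$ where $m_J = \sum_{i : J \in U_i}\epsilon_i$, using that the $\pi^a_J$ pairwise commute. Then each of (1), (2), (3) becomes the claim that $m_J = \delta_{J,K}$, which follows from the tree structure: deleting an edge $e$ of a tree rooted at the basepoint partitions the remaining vertices into the "below $e$" part (the one not containing the root) and its complement, and a telescoping/cancellation argument over the edges incident to $K$ gives the result. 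The main obstacle — though it is really just careful bookkeeping rather than a genuine difficulty — is keeping straight the orientation conventions: which side of each edge $e_i$ the basepoint lies on, and hence which component $\eg{a}{e_i}$ names, and making sure the edge $e_0$ in part (1) is treated asymmetrically from $e_1,\dots,e_n$. Once the orientation conventions are pinned down, the cancellation is immediate. I should also note explicitly at the start that we are working with outer automorphism classes, so relations like $\zg{a}{C_0}\cdots\zg{a}{C_k} = 1$ (which hold only modulo inner automorphisms) are available; this is what makes part (3) work and is the only place the passage from $\psa$ to $\pso$ is used in the lemma.
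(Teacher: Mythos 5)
Your proposal is correct and takes essentially the same approach as the paper: the paper's proof (which is brief, treating only item (1) in detail) hinges on exactly the decomposition you identify --- that the vertex set of the "away-from-basepoint" component $\mathcal{L}_0$ of $C - e_0$ is the disjoint union of $\{K\}$ with the analogous components $\mathcal{L}_i$ for $i\geq 1$ --- and then observes that the formula follows from the definition of $\eg{a}{e_i}$ and commutativity of the $\pi^a_J$. Your explicit remark that part (3) is the one place the lemma uses the relation $\prod_{K}\pi^a_K = 1$ (valid only in $\pso$, not $\psa$) is a useful clarification that the paper leaves implicit under "a similar calculation applies."
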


\begin{proof} 
We explain the proof of item (1).
Let $C$ be the maximal subtree of $\Delta_a$ containing $K$ and let $\mathcal{L}_i$ be the component of $C- e_i$ disjoint from the basepoint. 
The vertex set of $\mathcal{L}_0$ is the disjoint union of $\{K\}$ with the vertex sets of the $\mathcal{L}_i$ for $i\geq1$. 
Equation (1) then follows from the definition of $\eg{a}{e_i}$.   
A similar calculation applies to cases (2) and (3).
\end{proof}

\begin{corollary}
The homomorphism $\phi\colon A_\Theta \to \pso$ is surjective.
\end{corollary}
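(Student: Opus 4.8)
The plan is to deduce this immediately from Lemma~\ref{l:gens} together with the presentation of $\pso$ recorded in Corollary~\ref{c:pres}. First I would recall that $\pso$ is generated by (the images of) the standard generators $\pi^a_K$, so it is enough to check that each such $\pi^a_K$ lies in $\phi(A_\Theta)$. Fix a vertex $a\in\G$ and a component $K$ of $\G-\st(a)$, regarded as a vertex of the forest $\Delta_a$. Lemma~\ref{l:gens} writes $\pi^a_K$ as an explicit word in the elements $\eg{a}{e}$ (for edges $e$ of $\Delta_a$ incident to $K$) and $\zg{a}{C_i}$ (for the maximal subtrees $C_i$ of $\Delta_a$), with the precise word depending on whether $K$ is a non-basepoint vertex, the basepoint of a non-preferred subtree, or the preferred basepoint $x^a_0$.

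The key point is that every factor appearing in these three formulas is, by construction, the $\phi$-image of a generator of $A_\Theta$: we have $\eg{a}{e}=\phi(v^a_e)$ for each edge $e$ of $\Delta_a$, and $\zg{a}{C}=\phi(v^a_C)$ for each maximal subtree $C\ne C^a_0$. One should check from the formula in Lemma~\ref{l:gens}(3) that the distinguished tree $C^a_0$ never contributes a factor $\zg{a}{C^a_0}$ on the right-hand side (in case~(3) only $\zg{a}{C_1},\dots,\zg{a}{C_k}$ occur), so the expressions genuinely stay within the span of the designated generators of $\Theta$; implicitly this is the relation $\prod_{i}\zg{a}{C^a_i}=1$, which is relation (R5) of Corollary~\ref{c:pres} repackaged via the fact that the maximal subtrees of $\Delta_a$ partition the components of $\G-\st(a)$. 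It follows that each $\pi^a_K\in\phi(A_\Theta)$, and since the $\pi^a_K$ generate $\pso$ the homomorphism $\phi$ is surjective.

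I do not expect any real obstacle here: the entire content sits in Lemma~\ref{l:gens}, and the only step requiring a moment's attention is the bookkeeping verification that the factors occurring are exactly $\{v^a_e\}$ and $\{v^a_C : C\ne C^a_0\}$, i.e.\ that $C^a_0$ is always the eliminated subtree.
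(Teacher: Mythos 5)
Your proof is correct and follows the same route as the paper: Lemma~\ref{l:gens} expresses each standard generator $\pi^a_K$ as a product of elements $\eg{a}{e}$ and $\zg{a}{C}$ (with $C\neq C^a_0$), all of which lie in $\phi(A_\Theta)$, and these standard generators generate $\pso$. Your additional bookkeeping check that $\zg{a}{C^a_0}$ never appears in the three formulas of Lemma~\ref{l:gens} is a valid and helpful observation, though the paper leaves it implicit.
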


\begin{proof}
In Lemma~\ref{l:gens}, we wrote each element of the standard generating set as a product of elements in the image of $\phi$.
\end{proof}

Lemma~\ref{l:gens} gives an obvious candidate for an inverse map.

\begin{definition} 
With $e_0,\ldots,e_n$ and $C_0,\ldots,C_k$ as in Lemma~\ref{l:gens}, let: 
\begin{equation*}
\psi(\pi^a_K)=  
\begin{cases} v^a_{e_0} {v^a_{e_1}}^{-1} \cdots {v^a_{e_n}}^{-1} & \text{if $K$ is not a basepoint of $\Delta_a$} \\
v^a_C {v^a_{e_0}}^{-1} \cdots {v^a_{e_n}}^{-1} &\text{if $K$ is a basepoint but not preferred}\\
{v^a_{C_1}}^{-1}\cdots{v^a_{C_k}}^{-1}\cdot {v^a_{e_0}}^{-1} \cdots {v^a_{e_n}}^{-1} &\text{if $K$ is the preferred basepoint of $\Delta_a$} 
\end{cases}
\end{equation*}
This defines a map $\psi\colon \pso\to A_\Theta$.
\end{definition}

We owe the reader a proof that this map, as defined on generators, extends to a well defined homomorphism.
The following lemma reduces the number of cases which we need to run through:

\begin{lemma}\label{l:decomp}
Let $\pi^a_K$ be a standard generator of $\pso$ and let $e_0,\ldots,e_n$ be the edges in $\Delta_a$ adjacent to $K$. If $K$ is not a basepoint of its tree in $\Delta_a$ we assume that $e_0$ is the edge in the direction of the basepoint. 
There exists a central element $g \in A_\Theta$ such that
$$\psi(\pi^a_K)=g\cdot (v^a_{e_0})^\epsilon (v^a_{e_1})^{-1}\cdots (v^a_{e_n})^{-1},$$
where $\epsilon\in\{1,-1\}$. If $K$ is a basepoint in $\Delta_a$ then $\epsilon=1$, otherwise $\epsilon=-1$.
\end{lemma}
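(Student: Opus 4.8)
The statement is a reorganization of the definition of $\psi$ on standard generators, arranged so as to split off the part of the word that is central in $A_\Theta$. The plan is therefore to first pin down that central part and then to walk through the three clauses of the definition of $\psi(\pi^a_K)$.

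First I would record the relevant commutation facts in $A_\Theta$. By construction $\Theta$ has an edge joining each vertex $v^a_C$ to every other vertex of $\Theta$, so each $v^a_C$ is central in $A_\Theta$, and hence so is any word in the generators $\{v^a_C\}$ and their inverses; this is the pool from which the element $g$ will be taken. I would also note that, because a vertex never forms an SIL-pair with itself, $\Theta$ contains an edge between $v^a_e$ and $v^a_f$ for any two edges $e,f$ of the \emph{same} support graph $\Delta_a$; consequently the generators $v^a_{e_0},\dots,v^a_{e_n}$ attached to the edges of $\Delta_a$ at $K$ commute pairwise, which makes the ordering of the factors in the edge-part of $\psi(\pi^a_K)$ immaterial.

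Next I would run through the three cases in the definition of $\psi(\pi^a_K)$, in each case sliding the $v^a_C$-factors to the front to form $g$ and reading off the exponent $\epsilon$ on the surviving $v^a_{e_0}$-factor. If $K$ is the preferred basepoint of $\Delta_a$, one takes $g$ to be the product of the $(v^a_{C_i})^{-1}$ over the non-preferred maximal subtrees $C_i$ of $\Delta_a$; if $K$ is a non-preferred basepoint of the subtree $C$, one takes $g=v^a_C$; in both of these basepoint cases the edges of $\Delta_a$ at $K$ all enter $\psi(\pi^a_K)$ symmetrically (there is no distinguished edge ``toward the basepoint''), so reading off the exponent of $v^a_{e_0}$ gives $\epsilon=1$, with the remaining factors $v^a_{e_1},\dots,v^a_{e_n}$ to the power $-1$. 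If instead $K$ is not a basepoint, then $e_0$ is the unique edge at $K$ in the direction of the basepoint of its subtree, no $v^a_C$-factor is present, so $g=1$, and inspecting the relevant clause gives $\epsilon=-1$. In every case Lemma~\ref{l:gens} — which already writes $\pi^a_K$, and hence $\psi(\pi^a_K)$, as a product of one $v$-factor for each edge of $\Delta_a$ at $K$ together with the central $v^a_C$-factors — certifies both that this rewriting is valid in $A_\Theta$ and that nothing beyond the $v^a_C$ generators is folded into $g$.

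The only genuine difficulty here is the sign bookkeeping: keeping straight which edge-factor carries $\epsilon$ as opposed to $-1$, reconciling the distinguished-edge convention for $e_0$ in the non-basepoint case with the symmetric treatment of edges in the two basepoint cases, and checking that the residual word is precisely $(v^a_{e_0})^{\epsilon}(v^a_{e_1})^{-1}\cdots(v^a_{e_n})^{-1}$ with no surviving terms other than the central $g$. Given the explicit formula for $\psi$ and the structure already supplied by Lemma~\ref{l:gens}, I expect this to be a short, if somewhat fiddly, verification.
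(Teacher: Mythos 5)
Your overall route is exactly the paper's: the decomposition is read directly off the definition of $\psi$, with $g$ collecting the $v^a_C$-factors, which are central because each $v^a_C$ is joined to every other vertex of $\Theta$; the paper disposes of the lemma in one line on precisely these grounds. The trouble is the sign bookkeeping, which you yourself single out as the only real content, and which you get backwards. Reading the definition of $\psi$: when $K$ is \emph{not} a basepoint, $\psi(\pi^a_K)=v^a_{e_0}(v^a_{e_1})^{-1}\cdots(v^a_{e_n})^{-1}$, so $g=1$ and the distinguished edge $e_0$ (toward the basepoint) carries exponent $+1$, i.e.\ $\epsilon=+1$; when $K$ is a basepoint (preferred or not), \emph{every} edge generator appears inverted, so $\epsilon=-1$. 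Indeed the latter is forced: at a basepoint there is no distinguished edge, $e_0$ is an arbitrary edge at $K$, so a formula giving it exponent $+1$ and the other edges $-1$ could not even be well defined. Your claims that ``reading off the exponent of $v^a_{e_0}$ gives $\epsilon=1$'' in the basepoint cases and that ``inspecting the relevant clause gives $\epsilon=-1$'' in the non-basepoint case are therefore not readings of the definition at all; they assert the opposite of what the definition says.

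What you have reproduced is the final sentence of the lemma as printed, which is evidently a misprint: it is inconsistent with the definition of $\psi$, with Lemma~\ref{l:gens} (where $\eg{a}{e_0}$ appears uninverted exactly in the non-basepoint case), and with the way the decomposition is used in the verification of relation (R4), where $v^a_e$ is said to occur with exponent $-1$ in $\psi(\pi^a_K)$ when the basepoint lies at or on the $K$-side of $e=\{K,L\}$ and with exponent $+1$ in the image of the generator at $L$ --- precisely the convention $\epsilon=+1$ off the basepoint, $\epsilon=-1$ at a basepoint. A sound write-up should prove the sign statement that actually follows from the definition (and note the discrepancy with the printed statement), rather than assert that the printed signs can be ``read off,'' since they cannot. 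The rest of your argument --- the three-case split, centrality of the $v^a_C$ and hence of $g$, the harmless observation that the $v^a_{e_i}$ with a common multiplier commute --- is correct and matches the paper.
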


\begin{proof}
This follows from the definition of $\psi$ and the fact that each element $v^a_C$ is central in $A_\Theta$.
\end{proof}

\begin{lemma}\label{l:comm2}
Suppose that $[\psi(\pi^a_K),\psi(\pi^b_L)]\neq1$. Then $(a,b)$ forms an SIL-pair and either:
\begin{itemize}
\item $K$ and $L$ are both dominating for the pair $(a,b)$, or
\item $K$ is dominating for the pair $(a,b)$ and $L$ is shared, or
\item $L$ is dominating for the pair $(a,b)$ and $K$ is shared, or
\item $K=L$ is a shared component for the pair $(a,b)$.
\end{itemize}
\end{lemma}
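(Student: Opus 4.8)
The plan is to strip the central factors from $\psi(\pi^a_K)$ and $\psi(\pi^b_L)$, reduce to a commutator of two words supported on disjoint vertex sets of $\Theta$, and then read the conclusion off the defining edges of $\Theta$. First I would invoke Lemma~\ref{l:decomp} to write $\psi(\pi^a_K)=g\cdot w_1$ and $\psi(\pi^b_L)=h\cdot w_2$, where $g,h$ are central in $A_\Theta$, the word $w_1=(v^a_{e_0})^{\epsilon}(v^a_{e_1})^{-1}\cdots(v^a_{e_n})^{-1}$ runs over the edges $e_0,\dots,e_n$ of $\Delta_a$ incident to $K$, and $w_2$ runs over the edges $f_0,\dots,f_m$ of $\Delta_b$ incident to $L$. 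Since $g$ and $h$ are central, $[\psi(\pi^a_K),\psi(\pi^b_L)]=[w_1,w_2]$, and the support of $w_1$ (resp.\ $w_2$) is exactly $\{v^a_{e_0},\dots,v^a_{e_n}\}$ (resp.\ the analogous set for $b$), since each edge contributes with exponent $\pm1$ and the $v^a_C$ occurring in $g$ are distinct $\Theta$-vertices. If either word is empty—e.g.\ when $K$ is an isolated vertex of $\Delta_a$—the commutator is trivial, so we may assume both are nonempty.

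Next I would dispose of the case $a=b$: the only non-edges of $\Theta$ among the vertices $v^{(-)}_{(-)}$ join vertices with distinct superscripts, so all the generators appearing in $w_1$ are pairwise joined to all the generators appearing in $w_2$ (or coincide with them), whence $[w_1,w_2]=1$. Thus we may assume $a\neq b$, so that $\supp(w_1)$ and $\supp(w_2)$ are disjoint vertex sets of $\Theta$. In the RAAG $A_\Theta$, a commutator of two words with disjoint supports is nontrivial if and only if some vertex of $\supp(w_1)$ is non-adjacent in $\Theta$ to some vertex of $\supp(w_2)$ (project onto the free group on the two offending vertices to see this). Hence there are indices $i,j$ with $v^a_{e_i}$ and $v^b_{f_j}$ non-adjacent in $\Theta$, and by the definition of the edge set of $\Theta$ this forces $(a,b)$ to be an SIL-pair and forces $e_i=\{[b]_a,M\}$ and $f_j=\{[a]_b,M\}$ for some common shared component $M$ of the pair $(a,b)$.

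Finally I would extract the four cases. The edge $e_i$ is incident to $K$, so $K$ is one of its two endpoints, giving $K=[b]_a$ (a dominating component) or $K=M$ (a shared component); symmetrically $L=[a]_b$ or $L=M$. The four combinations are then: $(K=[b]_a,\,L=[a]_b)$, the first bullet; $(K=[b]_a,\,L=M)$, the second; $(K=M,\,L=[a]_b)$, the third; and $(K=M,\,L=M)$, which forces $K=L=M$, the fourth.

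I do not anticipate a genuinely hard step: the lemma is essentially a translation of the defining edge relations of $\Theta$ through Lemma~\ref{l:decomp}. The only points needing care are the reduction in the first paragraph—checking that the central factors really drop out of the commutator and that Lemma~\ref{l:decomp} covers all three clauses of the definition of $\psi$, including the empty-word case—and the routine book-keeping of edge endpoints in the last paragraph.
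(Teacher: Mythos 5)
Your proof is correct and follows essentially the same route as the paper: strip the central factors via Lemma~\ref{l:decomp}, observe that a nontrivial commutator in $A_\Theta$ forces a non-commuting pair of generators $v^a_{e_i}$, $v^b_{f_j}$, and read off the four cases from the edge definition of $\Theta$. The extra cases you isolate ($a=b$, empty words) are handled implicitly in the paper's argument, and your parenthetical justification addresses the unused ``if'' direction of the biconditional---the direction you actually invoke is simply the contrapositive of the observation that two words commute whenever every letter of one commutes with every letter of the other.
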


\begin{proof} Let \begin{equation*} \psi(\pi^a_K)=g. (v^a_{e_0})^\epsilon (v^a_{e_1})^{-1}\cdots (v^a_{e_n})^{-1} \text{ and } \psi(\pi^b_L)=g'. (v^b_{f_0})^{\epsilon'} (v^b_{f_1})^{-1}\cdots (v^b_{f_m})^{-1} \end{equation*} be the decompositions of $\psi(\pi^a_K)$ and $\psi(\pi^b_L)$ respectively given by Lemma~\ref{l:decomp}. If these two elements do not commute in $A_\Theta$, then as $g$ and $g'$ are central, the elements \begin{equation*} (v^a_{e_0})^\epsilon (v^a_{e_1})^{-1}\cdots (v^a_{e_n})^{-1} \text{ and } (v^b_{f_0})^{\epsilon'} (v^b_{f_1})^{-1}\cdots (v^b_{f_m})^{-1} \end{equation*}
also do not commute in $A_\Theta$. In particular there exist $i$ and $j$ such that $v_{e_i}^a$ and $v_{f_j}^b$ do not commute in $A_\Theta$. From the definition of $A_\Theta$, this implies that $(a,b)$ is an SIL-pair and $e_i=\{[b]_a,L'\}$, $f_j=\{[a]_b,L'\}$ for some shared component $L'$ of $(a,b)$. As $K$ is an endpoint of $e_i$ and $L$ is an endpoint of $f_j$, one of the four cases listed above must hold. 
\end{proof}

\begin{proposition}
The map $\psi\colon \pso \to A_\Theta$ as defined on generators extends to a well defined homomorphism.
\end{proposition}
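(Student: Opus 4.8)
The plan is to extend $\psi$ freely to the free group on the standard generators and then verify that it kills every defining relator of $\pso$ listed in Corollary~\ref{c:pres}; this is exactly what is needed for $\psi$ to descend to a homomorphism $\pso\to A_\Theta$. The relators come in five families, (R1)--(R5). I would dispatch (R1)--(R3) uniformly using Lemma~\ref{l:comm2}, then verify (R5) by a telescoping cancellation, and finally treat (R4), which is the only genuinely delicate case.

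\textbf{Relations (R1)--(R3).} Each of these is a commutator $[\pi^a_K,\pi^b_L]$, so it suffices to show $[\psi(\pi^a_K),\psi(\pi^b_L)]=1$ in $A_\Theta$. By Lemma~\ref{l:comm2}, if this commutator were nontrivial then $(a,b)$ would be an SIL-pair and $(K,L)$ would be in one of the four listed configurations. I would then observe that the hypotheses of (R1)--(R3) exclude all four: in (R1) the vertices $a,b$ are adjacent, so $(a,b)$ is not an SIL-pair; in (R2) the condition $K\cap L=\varnothing$ forces $K\neq L$ while $b\notin K$ and $a\notin L$ say neither component is dominating; in (R3), by Lemma~\ref{l:components}, one of $K,L$ is dominating and the other is subordinate, hence neither dominating nor shared nor equal to the first. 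In each case no configuration of Lemma~\ref{l:comm2} can hold, so the commutator is trivial and the relator is killed.

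\textbf{Relation (R5).} Fix a vertex $a$. The generators $v^a_e$ (for $e$ an edge of $\Delta_a$) and $v^a_C$ (for $C\neq C_0^a$) pairwise commute in $A_\Theta$ (a vertex is never in an SIL-pair with itself), so they generate a free abelian subgroup and $\prod_{K\in I_a}\psi(\pi^a_K)$ can be computed additively in the exponents. Using the three formulas of Lemma~\ref{l:gens}, each edge-generator $v^a_e$ is contributed once with exponent $+1$ (from the endpoint of $e$ farther from the relevant basepoint, via case~(1)) and once with exponent $-1$ (from the nearer endpoint, via case~(1), (2) or (3)), and each tree-generator $v^a_C$ with $C\neq C_0^a$ is contributed once with exponent $+1$ (as the basepoint of $C$, case~(2)) and once with exponent $-1$ (inside $\psi(\pi^a_{x_0^a})$, case~(3)). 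Hence everything cancels and $\prod_{K\in I_a}\psi(\pi^a_K)=1$.

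\textbf{Relation (R4).} Here $b\in K$ forces $K=[b]_a$, and since $L$ is a component of both $\G-\st(a)$ and $\G-\st(b)$, Lemma~\ref{l:components} forces $L$ to be a shared component of the SIL-pair $(a,b)$; thus $e:=\{[b]_a,L\}$ is an edge of $\Delta_a$ and, by symmetry of ``shared'', $f:=\{[a]_b,L\}$ is an edge of $\Delta_b$. I must show $\psi(\pi^a_K)\psi(\pi^a_L)$ commutes with $\psi(\pi^b_L)$. By Lemma~\ref{l:decomp}, $\psi(\pi^a_K)\psi(\pi^a_L)$ is a central element of $A_\Theta$ times a word in the edge-generators $v^a_{e'}$ with $e'$ incident to $K$ or to $L$ in $\Delta_a$, and $\psi(\pi^b_L)$ is central times a word in the $v^b_{f'}$ with $f'$ incident to $L$ in $\Delta_b$. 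From the single non-commutation rule of $\Theta$, a pair $v^a_{e'},v^b_{f'}$ fails to commute only if $e'=\{[b]_a,M\}$ and $f'=\{[a]_b,M\}$ for some shared $M$; since $f'$ is incident to $L$ and $L\neq[a]_b$ this forces $M=L$, hence $e'=e$ and $f'=f$. So, reordering within the commuting family $\{v^a_\bullet\}$ and likewise for $\{v^b_\bullet\}$, the commutator reduces to $[(v^a_e)^m,(v^b_f)^{m'}]$, where $m$ and $m'$ are the total exponents of $v^a_e$ in $\psi(\pi^a_K)\psi(\pi^a_L)$ and of $v^b_f$ in $\psi(\pi^b_L)$. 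Now $v^b_f$ occurs exactly once in $\psi(\pi^b_L)$, so $m'=\pm1\neq0$; and $K,L$ lie in a common maximal subtree $C$ of $\Delta_a$ with one on each side of $e$, so by Lemma~\ref{l:gens} the endpoint farther from the basepoint of $C$ contributes $v^a_e$ with exponent $+1$ and the nearer one with exponent $-1$, giving $m=0$. Hence $[(v^a_e)^m,(v^b_f)^{m'}]=1$ and the (R4) relator is killed, completing the verification.

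I expect (R4) to be the main obstacle: it is precisely the case that records the failure of $\psa$ to be a RAAG, and it is the only relator whose verification needs both the exact sign bookkeeping of Lemma~\ref{l:gens} and a careful analysis of which pairs of $\Theta$-generators commute, so that all the ``subordinate'' edge-generators drop out and only the single surviving pair $v^a_e,v^b_f$ — which has total $v^a_e$-exponent zero — matters.
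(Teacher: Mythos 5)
Your argument is correct and follows the same overall plan as the paper: extend $\psi$ to the free group on the standard generators and verify that it kills every relator in the families (R1)--(R5) of Corollary~\ref{c:pres}, dispatching (R1)--(R3) via Lemma~\ref{l:comm2} and (R5) by telescoping cancellation of exponents. The one place your reasoning genuinely diverges is (R4). The paper invokes the Star Lemma to classify the edges of $\Delta_b$ incident to the shared component $L$ (one is $\{L,[a]_b\}$, the rest go to subordinate components), and from that classification concludes that once $v^a_e$ cancels, every remaining pair commutes. You skip the Star Lemma: you observe directly from the non-commutation rule defining $\Theta$ that among the generators appearing in $\psi(\pi^a_K)\psi(\pi^a_L)$ and $\psi(\pi^b_L)$, the only pair that could fail to commute is $(v^a_e,v^b_f)$ with $e=\{[b]_a,L\}$ and $f=\{[a]_b,L\}$, and that $v^a_e$ enters $\psi(\pi^a_K)\psi(\pi^a_L)$ once with each sign (by Lemma~\ref{l:gens}, since $K$ and $L$ lie on opposite sides of $e$ relative to the basepoint), so its net exponent is $0$ and the obstruction vanishes. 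This is a modest but real simplification: a global structural fact about $\Delta_b$ is replaced by a local computation, with the forest hypothesis used only implicitly through the definitions of $\Theta$, $\eg{a}{e}$ and $\zg{a}{C}$. Both treatments of (R4) are sound; yours is tighter.
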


\begin{proof}
We need to check the relations (R1)--(R5) in Corollary~\ref{c:pres} are sent to the identity under the induced map from the free group on the standard generators of $\pso$ to $A_\Theta$. 
The relations in (R1)--(R3) are commutators $[\pi_K^a,\pi_L^b]$ corresponding to the following situations:
\begin{itemize}
\item The commutator $[a,b]=1$, so in particular $(a,b)$ is not an SIL-pair.
\item The components $K$ and $L$ are disjoint and non-dominating for the pair $(a,b)$.
\item One component is dominating and the other is subordinate for the pair $(a,b)$.
\end{itemize}
By Lemma~\ref{l:comm2}, in all three situations we have $[\psi(\pi^a_K),\psi(\pi^b_L)]=1$.

 The relations in (R4) are of the form $[\pi^a_K\pi_L^a,\pi^b_L]$, where $(a,b)$ is an SIL-pair, $K$ is dominating, and $L$ is shared for the pair $(a,b)$. Let $e$ be the edge $e=\{K,L\}$ in the support graph $\Delta_a$. Let $x$ be the basepoint of the component of $\Delta_a$ containing $K$ and $L$. If $x$ is closer to $K$ (including $x=K$) then $v^a_e$ occurs with exponent $-1$ in the decomposition of $\psi(\pi_K^a)$ and exponent $+1$ in the decomposition of $\psi(\pi_L^b)$. Similarly, if $x$ is closer to $L$ (including $x=L$) then  $v^a_e$ occurs with exponent $-1$ in the decomposition of $\psi(\pi_L^a)$ and with exponent $+1$ in the decomposition of $\psi(\pi_K^a)$. In either case, this term cancels out in any reduced word representing the product $\psi(\pi_K^a)\psi(\pi_L^a)$. More precisely, one can show that there exists a central element $g$ in $A_\Theta$ such that: $$\psi(\pi_K^a)\psi(\pi_L^a)=g\prod_{e_i} (v_{e_i}^a)^{-1},$$ 
where this product is taken over all edges $e_i\neq e$ in $\Delta_a$ adjacent to either $K$ or $L$. In contrast, the image of $\pi_L^b$ in $A_\Theta$ is of the form $$\psi(\pi^b_L)=g'. (v^b_{f_0})^\epsilon (v^b_{f_1})^{-1}\cdots (v^b_{f_m})^{-1},$$ where $g'$ is central and $f_0,\ldots, f_m$ are the edges adjacent to $L$ in $\Delta_b$. As $L$ is shared, the Star Lemma tells us that one of these edges $f_i=\{L,[a]_b\}$ has the dominating component $[a]_b$ as its other vertex, and the remaining edges are of the form $\{L,L'\}$, where $L'$ is a subordinate component of $(a,b)$ (this uses the fact that $\Delta_a$ is a forest). 
As $v_e^a$ does not occur in our decomposition of $\psi(\pi_K^a)\psi(\pi_L^a)$, it follows that $[v^a_{e_i},v^b_{f_j}]=1$ in $A_\Theta$ for all $e_i$ and $f_i$ in the above two decompositions. Hence $\psi(\pi_K^a)\psi(\pi_L^a)$ and $\psi(\pi_L^b)$ commute in $A_\Theta$.

Finally, each relation in (R5) is of the form $\prod_{K\in \Delta_a} \pi_K^a$. We want to show that $$\prod_{K \in \Delta_a}\psi(\pi_K^a)=1.$$ From the definition of $\psi(\pi_K^a)$, one can check that each element $v_e^a$ occurs with exponents $+1$ and $-1$ exactly once each in the above product (corresponding to the images of the generators given by the endpoints of the edge $e$ under $\psi$). Similarly, if $C_0,\ldots, C_k$ are the components of $\Delta_a$ and $i\geq1$ then $v^a_{C_i}$ also occurs with exponents $+1$ and $-1$ exactly once (the exponent $+1$ appears in the image of the element given by the basepoint of $C_i$, and the exponent $-1$ appears in the image of the generator corresponding to our preferred basepoint). As all the above elements commute in $A_\Theta$, it follows that $\prod_{K \in \Delta_a}\psi(\pi_K^a)=1$.  
\end{proof}

\begin{theorem}
The group $\pso$ is isomorphic to $A_\Theta$.
\end{theorem}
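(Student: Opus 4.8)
The plan is to show that the two homomorphisms $\phi\colon A_\Theta\to\pso$ and $\psi\colon\pso\to A_\Theta$ constructed above are mutually inverse. Both have already been verified to be well-defined homomorphisms (and $\phi$ is surjective), so it is enough to check that $\phi\circ\psi$ and $\psi\circ\phi$ restrict to the identity on the standard generators $\pi^a_K$ of $\pso$ and on the generators $v^a_e$, $v^a_C$ of $A_\Theta$, respectively.

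That $\phi\circ\psi=\mathrm{id}_{\pso}$ is immediate from Lemma~\ref{l:gens}: by construction the word $\psi(\pi^a_K)$ was chosen precisely so that substituting $\eg{a}{e_i}$ for each $v^a_{e_i}$ and $\zg{a}{C_i}$ for each $v^a_{C_i}$ reproduces the expression for $\pi^a_K$ given in that lemma, so $\phi(\psi(\pi^a_K))=\pi^a_K$ for every standard generator, and these generate $\pso$.

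The substance is in $\psi\circ\phi$. Here I would compute $\psi(\phi(v^a_e))=\psi(\eg{a}{e})$ and $\psi(\phi(v^a_C))=\psi(\zg{a}{C})$ for $C\neq C_0^a$, by writing $\eg{a}{e}$ and $\zg{a}{C}$ as products of standard generators $\pi^a_K$ over the vertex set of a subtree and applying $\psi$. Since all the resulting factors $\psi(\pi^a_K)$ have multiplier $a$, their $v^a_{\bullet}$-entries commute in $A_\Theta$, so the product simplifies entrywise, and the point is a telescoping: an edge $e'$ with both endpoints in the relevant vertex set contributes $v^a_{e'}$ with exponent $+1$ from the endpoint farther from the basepoint and with exponent $-1$ from the endpoint nearer the basepoint, so it cancels; the separating edge $e$ in the product for $\eg{a}{e}$ has exactly one endpoint in the far component $\mathcal{L}$, and that endpoint is not a basepoint, so $v^a_e$ survives once with exponent $+1$; and in the product for $\zg{a}{C}$ with $C=C^a_i$ the generator $v^a_{C_i}$ survives once (contributed by the basepoint $x^a_i$ of $C_i$), while no tree-generator appears in the product for $\eg{a}{e}$ since $\mathcal{L}$ contains no basepoint. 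This gives $\psi(\eg{a}{e})=v^a_e$ and $\psi(\zg{a}{C})=v^a_C$, hence $\psi\circ\phi=\mathrm{id}_{A_\Theta}$, completing the proof.

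I expect the only real obstacle to be the bookkeeping in the last step: tracking the signs of the exponents of each $v^a_e$ and $v^a_{C_i}$ through the three cases of Lemma~\ref{l:gens} (interior vertex, non-preferred basepoint, preferred basepoint), and checking that the preferred basepoint, whose $\psi$-image carries the extra factors $(v^a_{C_1})^{-1}\cdots(v^a_{C_k})^{-1}$, does not interfere. This is the same kind of telescoping computation already carried out in verifying that relation (R5) is respected by $\psi$, so no new ideas are required.
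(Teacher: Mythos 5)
Your proposal is correct and follows essentially the same approach as the paper: $\phi\circ\psi=\mathrm{id}$ is read off from Lemma~\ref{l:gens}, and $\psi\circ\phi=\mathrm{id}$ is verified by expanding $\zg{a}{C}$ and $\eg{a}{e}$ as products of standard generators, applying $\psi$, and performing the same telescoping cancellation of edge-generators (with the basepoint of $C$ contributing the lone surviving $v^a_C$, and the unique $\mathcal{L}$-endpoint of $e$ contributing the lone surviving $v^a_e$). Your worry about the preferred basepoint interfering is unfounded for the reason you half-anticipate: neither $\mathcal{L}$ (which avoids the basepoint of $C$) nor $C\neq C_0^a$ ever contains the preferred basepoint, so the extra $(v^a_{C_j})^{-1}$ factors never enter the computation.
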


\begin{proof} It only remains to show that $\phi$ and $\psi$ are mutual inverses. The fact that $\phi(\psi(\pi_K^a)) = \pi_K^a$ follows directly from the definitions and Lemma~\ref{l:gens}. If $v^a_C$ is a generator of $A_\Theta$ then $$\psi(\phi(v^a_C))=\prod_{K\in C} \psi(\pi^a_K)$$
The element $v^a_C$ occurs exactly once with exponent $+1$ under the image of the standard generator $\pi_x^a$ corresponding to the basepoint of $C$. If $e\in C$, the generator $v_e^a$ occurs twice in the above product (corresponding to the two endpoints of $e$), once with exponent $+1$ and once with exponent $-1$. As all these elements commute, $\psi(\phi(v^a_C))=v^a_C$. Similarly $$\psi(\phi(v^a_e))=\prod_{K\in\mathcal{L}} \psi(\pi_K^a),$$ where $\mathcal{L}$ is the component of $C - e$ which does not contain the basepoint. The element $v^a_e$ occurs once in this product with exponent $+1$ in the image of the standard generator given by the one endpoint of $e$ which is contained in $\mathcal{L}$. Every edge $e' \in \mathcal{L}$ then occurs twice, once with exponent $+1$ and once with exponent $-1$. These appear in the images of the generators corresponding to the endpoints of $e'$, both of which lie in $\mathcal{L}$. It follows that $\psi(\phi(v^a_e))=v^a_e$. Hence the compositions $\phi \circ \psi$ and $\psi \circ \phi$ are the identity maps on $\pso$ and $A_\Theta$ respectively, so that both maps are isomorphisms.
\end{proof}

\bibliography{Version2}
\bibliographystyle{plain}

\end{document}